\def\R{{\mathbb R}}
\def\N{{\mathbb N}}
\def\a{\alpha}
\def\b{\beta}
\def\t{\theta}
\def\r{\varrho}
\def\g{\gamma}
\def\s{\sigma}
\def\t{\theta}
\def\l{\lambda}
\def\p{\partial}
\def\O{\Omega}
\def\e{\varepsilon}
\def\v{\varphi}
\def\o{\omega}
\def\mc{\mathcal}
\def\mf{\mathfrak}
\def\ua{\uparrow}
\def\da{\downarrow}
\numberwithin{equation}{section}
\theoremstyle{definition}
\theoremstyle{plain}
\newtheorem{theorem}{Theorem}[section]
\newtheorem{lemma}{Lemma}[section]
\newtheorem{corollary}{Corollary}[section]
\newtheorem{conjecture}{Conjecture}[section]
\theoremstyle{definition}
\begin{document}

\title[Global bifurcation diagrams of positive solutions]
{Global bifurcation diagrams of positive solutions for a class of 1-D superlinear indefinite problems}

\author{M. Fencl}
\address{Department of Mathematics and NTIS, Faculty of Applied Sciences,
University of West Bohemia, Univerzitn\'i 8, 30100, Plzen, Czech Republic}
\email{fenclm37@ntis.zcu.cz}
\author{J. L\'{o}pez-G\'{o}mez}
\address{Institute of Inter-disciplinar Mathematics (IMI) and Department of Analysis and Applied Mathematics, Complutense University of Madrid, Madrid 28040, Spain}
\email{Lopez$\_$Gomez@mat.ucm.es}

\maketitle

\begin{abstract}
This paper analyzes the structure of the set of positive solutions of a class of one-dimensional superlinear indefinite bvp's. It is  a paradigm of how mathematical analysis aids the numerical study of a problem, whereas simultaneously its numerical study confirms and illuminates the analysis. On the analytical side, we establish the fast decay of the positive solutions as $\l\da -\infty$  in the region where $a(x)<0$ (see \eqref{1.1}), as well as  the decay of the solutions of the parabolic counterpart of the model (see \eqref{1.2}) as $\l\da -\infty$ on any subinterval of $[0,1]$ where $u_0=0$, provided $u_0$ is a subsolution of \eqref{1.1}.
This result provides us with a proof of a conjecture of \cite{GRLGJDE} under an additional
condition of a dynamical nature.  On the numerical side, this paper ascertains the global structure of the set of positive solutions on some paradigmatic prototypes whose intricate behavior is far from predictable from  existing analytical results.
\vspace{0.1cm}

\noindent \emph{2010 MSC: 34B15,  34B08, 65N35, 65P30 }
\vspace{0.1cm}

\noindent \emph{Keywords and phrases: superlinear indefinite problems, positive
solutions, bifurcation, uniqueness, multiplicity, global components, path-following,
pseudo-spectral methods, finite-differences schemes.}
\vspace{0.1cm}

\noindent Partially supported by the Research Grant  PGC2018-097104-B-IOO of the Spanish Ministry of Science, Innovation and Universities,  and the Institute of Inter-disciplinar Mathematics (IMI) of Complutense University. M. Fencl has been supported by the project SGS-2019-010 of the University of West Bohemia, the project 18-03253S of the Grant Agency of the Czech Republic and the project LO1506 of the Czech Ministry of Education, Youth and Sport.
\end{abstract}

\section{Introduction}

\noindent In this paper we study, both analytically and numerically,  the global structure of the  bifurcation diagram of positive solutions of the semilinear boundary value problem
\begin{equation}
\label{1.1}
		\left\{\begin{aligned}
			-u'' &= \lambda u + a(x)u^{2}\quad\text{in }(0,1),\\
			u(0) &= u(1) = 0,
		\end{aligned}\right.
\end{equation}	
where $a\in C[0,1]$ is a real function that changes the sign in $(0,1)$ and $\lambda\in\mathbb{R}$ is regarded as a bifurcation parameter. Moreover, we analyze the dynamics of its parabolic counterpart
\begin{equation}
\label{1.2}
		\left\{\begin{array}{ll}
			\frac{\p u}{\p t}-\frac{\p^2 u}{\p x^2}  = \lambda u + a(x)u^{2}, &
        \quad t>0, \;\; x\in(0,1),\\ u(0,t) = u(1,t) = 0, & \quad t>0,\\
        u(x,0)=u_0(x), & \quad x\in [0,1],
		\end{array}\right.
\end{equation}	
for some significative choices of the initial data $u_0\gneq 0$, i.e., $u_0\geq 0$, $u_0\neq 0$.
\par
In our numerical experiments we have used the special choices
\begin{equation}
\label{1.3}
  a(x) := \sin[(2n+1) \pi x],\qquad n\in\{1,2,3\},
\end{equation}
and
\begin{equation}
\label{1.4}
	a(x):= \left\{ 	\begin{array}{ll} {\mu} \sin(5\pi x) & \quad \text{ if }\;\; x\in [0,0.2)\cup(0.8,1],
\\[1ex]  \sin(5\pi x) & \quad\text{ if }\;\; x\in [0.2,0.8],  \end{array}\right.
\end{equation}
where $\mu\geq 1$ is regarded as a secondary bifurcation parameter. In these examples, the graph of
 $a(x)$ has  $n+1$ positive and $n$ negative bumps.
\par
Since $a(x)$ changes the sign, \eqref{1.1} is a superlinear indefinite problem. These problems have attracted a lot of attention during the last three decades. Some significant monographs dealing with them are those of  Berestycki, Capuzzo-Dolcetta and Nirenberg \cite{BCDNa,BCDNb}, Alama and Tarantello \cite{AT}, Amann and L\'{o}pez-G\'{o}mez \cite{ALG}, G\'{o}mez-Re\~{n}asco and L\'{o}pez-G\'{o}mez \cite{GRLGJDE,GRLGDIE}, Mawhin, Papini and Zanolin \cite{MPZ}, L\'{o}pez-G\'{o}mez, Tellini and Zanolin \cite{LGTZ},   L\'{o}pez-G\'{o}mez and Tellini \cite{LGT}, Feltrin and Zanolin, as well as Chapter 9 of L\'{o}pez-G\'{o}mez \cite{LG16}, the recent monograph  of Feltrin \cite{Fe},  and the list of references there in. Superlinear indefinite  problems have been recently introduced in the context of quasilinear elliptic equations by L\'{o}pez-G\'{o}mez,  Omari and Rivetti \cite{LGORa,LGORb}, and L\'{o}pez-G\'{o}mez and Omari \cite{LGOa,LGOb,LGOc}.
\par
Thanks to  Amann and L\'{o}pez-G\'{o}mez \cite{ALG}, it is already known that \eqref{1.1} possesses a component of positive solutions, $\mathscr{C}^+\subset \R \times \mc{C}[0,1]$, such that $(\pi^2,0)\in\mathscr{\bar C}^+$, i.e., $\mathscr{C}^+$ bifurcates from $u=0$ at $\l=\pi^2$. Moreover, $\mathscr{C}^+$ is unbounded in $ \R \times \mc{C}[0,1]$, and  \eqref{1.1} cannot admit a positive solution for sufficiently large $\l>\pi^2$. Furthermore, by the existence of universal a priori bounds uniform on compact subintervals of $\l\in\R$ for the positive solutions
of \eqref{1.1}, $(-\infty,\pi^2)\subset \mc{P}_\l(\mathscr{C}^+)$, where $\mc{P}_\l$ stands for the $\l$-projection operator defined by
$$
  \mc{P}_\l (\l,u)=\l,\qquad (\l,u)\in \R \times \mc{C}[0,1].
$$
Actually, according to G\'{o}mez-Re\~{n}asco and L\'{o}pez-G\'{o}mez \cite{GRLGJDE,GRLGDIE}, either $\mc{P}_\l(\mathscr{C}^+)=(-\infty,\pi^2)$, or there exists $\l_t>\pi^2$ such that  $\mc{P}_\l(\mathscr{C}^+)=(-\infty,\l_t]$. Moreover, \eqref{1.1} admits  some stable
positive solution if, and only if, $\l\in (\pi^2,\l_t]$, and, in such case,  the stable solution
is unique, and it equals the minimal positive solution of \eqref{1.1}. The fact that  $\l_t$ is turning point is emphasized by its subindex.
\par
Besides the (optimal) multiplicity result of Amann and  L\'{o}pez-G\'{o}mez \cite{ALG},
establishing that \eqref{1.1} has, at least, two positive solutions for every $\l\in (\pi^2,\l_t)$ if $\l_t >\pi^2$, there are some others multiplicity results by Gaudenzi, Habets and Zanolin \cite{GHZ} later generalized by Feltrin and Zanolin \cite{FZ} and Feltrin \cite{Fe}. Precisely, according to Corollary 1.4.2 of Feltrin \cite{Fe}, which extends \cite[Th.2.1]{GHZ}, setting $a=a^+ - \mu a^-$, there exists $\mu_c>0$ such that, for every $\mu > \mu_c$, the problem
\begin{equation}
\label{1.5}
		\left\{\begin{aligned}
			-u'' &= (a^+-\mu a^-)u^2\quad\text{in }(0,1),\\
			u(0) &= u(1) = 0,
		\end{aligned}\right.
\end{equation}	
possesses, for every $\mu >\mu_c$, at least, $2^{n+1}-1$ positive solutions if $a(x)$ is given by \eqref{1.3}. However, this result does not solve the conjecture of  G\'{o}mez-Re\~{n}asco and L\'{o}pez-G\'{o}mez \cite{GRLGJDE} according with it there is a  $\l_c<\pi^2$ such that, for every
$\l < \l_c$, \eqref{1.1} possesses, at least,
\begin{equation*}
  \sum_{j=1}^{n+1} \Big(\!\! \begin{array}{c}  n+1 \\ j \end{array}\!\!\Big) =2^{n+1}-1
\end{equation*}
positive solutions; among them, $n+1$ with a single peak around each of the maxima of $a(x)$, $\frac{(n+1)n}{2}$  with two peaks, and, in general,  $\frac{(n+1)!}{j!(n+1-j)!}$ with $j$ peaks for every $j\in\{1,...,n+1\}$. For instance, when $n=1$, then $a(x) = \sin(3\pi x)$ and, according to our numerical experiments,  \eqref{1.1} indeed has, for sufficiently negative $\l$,  three positive solutions: one with a bump on the left, one with a bump on the right, and another one with two bumps (see Figure \ref{FIG:sin3 sol examples}). Note that, essentially, Corollary 1.4.2 of Feltrin \cite{Fe} establishes that \eqref{1.1} possesses $2^{n+1}-1$ positive solutions provided $\l=0$ and $\|a^-\|_\infty$ is sufficiently large, though it does not give any information for $\l<0$. Thus, after two decades, the conjecture of \cite{GRLGJDE} seems to remain open. For the purpose of this paper, we formulate here this conjecture in the following manner:
\begin{conjecture}
	\label{con3.1}
	Suppose that $a(x)$ possesses $n+1$ intervals where it is positive separated away by $n$ intervals where it is negative. Then, there exists $\l_c<\pi^2$ such that, for every $\l <\l_c$, the problem \eqref{1.1} admits, at least, $2^{n+1}-1$ positive solutions.
\end{conjecture}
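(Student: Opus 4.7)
The plan is to construct, for each nonempty $S\subseteq\{1,\ldots,n+1\}$, a distinct positive solution $U_S^\lambda$ of \eqref{1.1} whose peaks lie on exactly those positive bumps of $a(x)$ indexed by $S$. Writing $\{x\in(0,1):a(x)>0\}=\bigcup_{k=1}^{n+1}I_k^+$ with $I_k^+=(\alpha_k,\beta_k)$ and $\sigma_k:=\pi^2/(\beta_k-\alpha_k)^2$, the signature that will separate these $\sum_{j=1}^{n+1}\binom{n+1}{j}=2^{n+1}-1$ solutions is that $U_S^\lambda$ stays bounded below by a positive function on $\bigcup_{k\in S}I_k^+$ while tending to $0$ on its complement as $\lambda\downarrow -\infty$.

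First I would construct, for every $\lambda<\min_k\sigma_k$, the minimal positive solution $w_k^\lambda$ of the \emph{restricted} Dirichlet problem $-w''=\lambda w+a(x)w^2$ on $I_k^+$, with $w(\alpha_k)=w(\beta_k)=0$. Since $a>0$ throughout $I_k^+$, this restricted problem is of logistic type, so its solvability on the full half-line $\lambda<\sigma_k$ follows from the usual sub/supersolution machinery, and a standard rescaling gives $w_k^\lambda(x)\sim|\lambda|/a(x)$ on compact subsets of $I_k^+$ as $\lambda\downarrow-\infty$. For any nonempty $S$ I would then patch these pieces into
$$
\underline{u}_S^\lambda(x):=\sum_{k\in S}w_k^\lambda(x)\,\chi_{I_k^+}(x),
$$
and verify that $\underline{u}_S^\lambda\in H_0^1(0,1)$ is a weak subsolution of \eqref{1.1}: the equation holds classically in each $I_k^+$ with $k\in S$, trivially in the complement, and Hopf's lemma (with $(w_k^\lambda)'(\alpha_k^+)>0$ and $(w_k^\lambda)'(\beta_k^-)<0$) ensures that the boundary contributions in the integration-by-parts have the correct sign.

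Next I would feed $\underline{u}_S^\lambda$ as the initial datum into the parabolic problem \eqref{1.2}. Being a subsolution, it generates a monotonically non-decreasing orbit, kept uniformly bounded by the universal a priori bounds of \cite{GRLGJDE,GRLGDIE} recalled in the excerpt; hence $u(\cdot,t)\uparrow U_S^\lambda$ as $t\to\infty$ to a positive classical equilibrium $U_S^\lambda\geq\underline{u}_S^\lambda$ of \eqref{1.1}. The decay theorem advertised in the abstract, applied with $u_0=\underline{u}_S^\lambda$ (which vanishes on $[0,1]\setminus\bigcup_{k\in S}I_k^+$), then yields $U_S^\lambda\to 0$ uniformly on compact subsets of that complement as $\lambda\downarrow-\infty$. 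Combined with $U_S^\lambda\geq w_k^\lambda\to+\infty$ on compacta of $I_k^+$ for $k\in S$, this produces the separating signature, hence the mutual distinctness of the finite family $\{U_S^\lambda\}$ for every $\lambda<\lambda_c$ with $\lambda_c$ sufficiently negative.

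The main obstacle lies in the ``additional dynamical condition'' alluded to in the abstract: one has to rule out that the $\omega$-limit $U_S^\lambda$ forgets the support pattern of the initial datum $\underline{u}_S^\lambda$. This is precisely what the decay theorem is designed to furnish, and its proof presumably requires delicate pointwise control of the parabolic flow in the transition zones near $\partial I_k^+$, where $w_k^\lambda$ drops from order $|\lambda|$ down to zero across a layer of thickness $O(|\lambda|^{-1/2})$. Turning that control into a quantitative bound on $[0,1]\setminus\bigcup_{k\in S}I_k^+$ that stays strictly below the peak heights on $\bigcup_{k\in S}I_k^+$ is, in my view, the most technical step of the whole argument.
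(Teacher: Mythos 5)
Your strategy is essentially the one the paper follows in Theorem \ref{th3.3}: patch together single-bump solutions of the restricted problems on the intervals $I_k^+$, $k\in S$, to form a subsolution $\underline{u}_S^\lambda$, let the parabolic flow \eqref{1.2} increase monotonically from it, pass to the limit $t\to\infty$ to obtain a steady state $U_S^\lambda$, and separate the $2^{n+1}-1$ resulting solutions via the decay results (Theorems \ref{th3.1}, \ref{th3.2} and \ref{th1.1}). But the statement is a \emph{conjecture}, and the paper does not prove it; it proves it only conditionally. The genuine gap in your argument is the sentence asserting that the orbit is ``kept uniformly bounded by the universal a priori bounds of \cite{GRLGJDE,GRLGDIE}''. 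Those a priori bounds (recalled in Theorem \ref{th2.4}) concern the \emph{positive solutions of the elliptic problem} \eqref{1.1}, uniformly on compact $\lambda$-intervals; they say nothing about the parabolic orbit $u(\cdot,t;\underline{u}_S^\lambda,\lambda)$. A monotonically increasing orbit emanating from a subsolution either converges to a classical equilibrium or grows without bound (possibly blowing up in finite time, or stabilizing to a metasolution that is $+\infty$ on part of $(0,1)$). Excluding the second alternative is precisely hypothesis \eqref{iii.33} of Theorem \ref{th3.3} --- $T_\mathrm{max}(u_0,\lambda)=\infty$ together with a uniform-in-time bound $C(\lambda)$ --- and the paper states explicitly that whether this holds as $\lambda\downarrow-\infty$ remains open. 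So your proposal reproduces the conditional result, not the conjecture; moreover, the ``most technical step'' you single out (pointwise control in the transition layers) is not where the obstruction lies, since Theorem \ref{th1.1} already supplies that control; the obstruction is the global-in-time boundedness of the orbit.

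A secondary inaccuracy: on $I_k^+$ the weight $a$ is \emph{positive}, so the restricted problem $-w''=\lambda w+aw^2$ is superlinear, not of logistic type, and its positive solutions do not behave like $|\lambda|/a(x)$ on compacta. After the rescaling $w=-\lambda U$ it becomes the singular perturbation problem \eqref{iii.30}, whose solutions concentrate into a single peak, tending to $0$ away from the peak point and to $+\infty$ at it (see Figure \ref{sinpeak}). The correct separating signature, as used in the proof of Theorem \ref{th3.3}, is $U_S^\lambda\geq\theta_{\{\lambda,k\}}>0$ on $I_k^+$ for $k\in S$ (so that $\sup_{I_k^+}U_S^\lambda$ stays large) versus $U_S^\lambda\to0$ uniformly on compacta of $I_k^+$ for $k\notin S$, the latter being the content of Theorem \ref{th1.1}. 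This slip does not destroy the combinatorial count, but the asymptotics you state are wrong.
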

The main goal of this paper is to gain some insight, on this occasion of a dynamical nature, into that conjecture and to face, by the first time, the ambitious problem of ascertaining the global topological structure of the set of positive solutions of \eqref{1.1}. As a direct consequence of our numerical experiments for the special choice \eqref{1.4}, it becomes apparent the optimality of  \cite[Cor. 1.4.2]{Fe}, in the sense that, for sufficiently small $\mu>0$, the problem  \eqref{1.5} might have less than $2^{n+1}-1$ positive solutions.
\par
Throughout most of this paper, we will assume that, much like for the special choice \eqref{1.3}, $a(x)$ satisfies
\vspace{0.2cm}
\begin{enumerate}
\item[{($\mathrm{H}_a$)}] The open sets
$$
  \O_-:= a^{-1}((-\infty,0))\quad \hbox{and}\quad \O_+:= a^{-1}((0,\infty))
$$
consist of finitely many (non-trivial) intervals, $I^-_j$, $j\in\{1,...,r\}$, and $I^+_i$, $i\in \{1,...,s\}$,
respectively,  and $a$ vanishes at the ends of these intervals in such a way that each interior interval $I_i^\pm$ is surrounded by two intervals of the form
$I^\mp_j$, much like it happens with the special choice \eqref{1.3}. In such case, we will denote,
$I^-_j=(\a_j,\b_j)$, with $\a_j<\b_j$ for all $j\in\{1,...,r\}$, and $I_i^+=(\g_i,\r_i)$,
with $\g_i<\r_i$ for all $i\in\{1,...,s\}$.
\end{enumerate}
\vspace{0.2cm}

As these intervals are adjacent and interlacing, $|r-s|\leq 1$. Under this assumption, our main analytical results can be summarized as follows. Theorem \ref{th3.1} establishes that, for any family of positive solutions of \eqref{1.1}, $\{(\l,u_\l)\}_{\l<0}$,
\begin{equation}
\label{1.6}
  \lim_{\l\da -\infty}u_\l (x)=0 \quad \hbox{for all}\;\; x\in \O_-=\bigcup_{j=1}^r I^-_j
\end{equation}
uniformly in compact subsets of $\O_-$. Theorem \ref{th3.2} establishes that there exists $T>0$ such that, as soon as $u_0\geq 0$ is a subsolution of \eqref{1.1}, the unique solution of \eqref{1.2}, denoted by $u(x,t;u_0,\l)$, is defined in $[0,T]$ as $\l\da -\infty$ and satisfies, for every $t\in [0,T]$ and
$x\in \O_-$,
\begin{equation}
\label{1.7}
  \lim_{\l\da -\infty} u(x,t;u_0,\l)=0.
\end{equation}
Moreover, this behavior is inherited by the intervals $I_i^+$ where $u_0=0$, as soon as $u_0$ also vanishes at the adjacent $I^-_j$'s, as established by the next result.

\begin{theorem}
\label{th1.1}
Suppose that $u_0\gneq 0$ is a subsolution of \eqref{1.1} such that $u_0=0$ on some interior $I_i^+$ for some $i\in\{1,...,s\}$, as well as on its adjacent intervals, say $I_j^-$ and $I_{j+1}^-$, i.e.,
$$
  u_0=0\quad \hbox{in}\;\; I_j^-\cup I_i^+\cup I_{j+1}^-=(\a_j,\b_j)\cup (\b_j,\a_{j+1})\cup (\a_{j+1},\b_{j+1}).
$$
Then, there exists $T=T(u_0)>0$ such that,  for every $t\in [0,T]$ and $ x\in (\a_j,\b_{j+1})$,
\begin{equation}
\label{1.8}
  \lim_{\l\da-\infty}u(x,t;u_0,\l)=0.
\end{equation}
Moreover, \eqref{1.8} holds uniformly in compact subintervals of $(\a_j,\b_{j+1})$. A similar result holds true for $I^+_1=(0,\r_1)$ and $I^+_{s}=(\g_s,1)$.
\end{theorem}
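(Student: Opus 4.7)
The plan is to localize the parabolic comparison to a compact subrectangle of $(\a_j,\b_{j+1})\times [0,T]$, to control the two lateral boundary values by means of Theorem \ref{th3.2}, and then to propagate the smallness into the positive bump $I_i^+$ by an ODE supersolution of Bernoulli type. Given a compact subinterval $K\subset(\a_j,\b_{j+1})$, I would fix $x_1\in I_j^-$ and $x_2\in I_{j+1}^-$ with $K\subset[x_1,x_2]$. Since $u_0$ is a subsolution of \eqref{1.1}, the parabolic trajectory $t\mapsto u(\cdot,t;u_0,\l)$ is nondecreasing in $t$; combined with Theorem \ref{th3.2} applied at $x_1,x_2\in\O_-$, this yields, with $T=T(u_0)>0$ the time horizon produced by that theorem,
\[
\eta(\l):=\max_{i\in\{1,2\}}\,\sup_{t\in[0,T]}u(x_i,t;u_0,\l)=\max_{i\in\{1,2\}}u(x_i,T;u_0,\l)\longrightarrow 0\quad\hbox{as }\l\da-\infty.
\]

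On the rectangle $[x_1,x_2]\times[0,T]$ the function $u$ is nonnegative (by $u_0\geq 0$ and the parabolic maximum principle) and satisfies
\[
u_t-u_{xx}=\l u+a(x)u^2\leq M u^2,\qquad M:=\max_{[x_1,x_2]}a^+,
\]
since $\l u\leq 0$ for $\l\leq 0$. I would then compare $u$ against the spatially constant function
\[
g(t):=\frac{\eta(\l)}{1-M\eta(\l)\,t},
\]
which solves $\dot g=Mg^2$ with $g(0)=\eta(\l)$. On the parabolic boundary of the rectangle the hypothesis $u_0=0$ on $(\a_j,\b_{j+1})$ gives $g(0)\geq 0=u_0(x)$ for $x\in[x_1,x_2]$, while at the two lateral endpoints $u(x_i,t;u_0,\l)\leq\eta(\l)=g(0)\leq g(t)$ for $t\in[0,T]$. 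The difference $w:=g-u$ then obeys $w_t-w_{xx}\geq M(g+u)\,w$ with bounded coefficient, and $w\geq 0$ on the parabolic boundary, so the standard linear parabolic maximum principle delivers $u\leq g$ throughout the rectangle, provided $g$ remains bounded.

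Since $\eta(\l)\to 0$, for $\l$ sufficiently negative one has $MT\eta(\l)<\tfrac{1}{2}$; then $g$ is well-defined on $[0,T]$, and
\[
\sup_{[x_1,x_2]\times[0,T]}u(\cdot,\cdot;u_0,\l)\leq g(T)=\frac{\eta(\l)}{1-MT\eta(\l)}\longrightarrow 0.
\]
This yields \eqref{1.8} uniformly on $K$, and since $K$ was an arbitrary compact subinterval of $(\a_j,\b_{j+1})$, uniform convergence on compact subintervals follows. The cases of the extreme bumps $I_1^+=(0,\r_1)$ and $I_s^+=(\g_s,1)$ are handled analogously: one of the two lateral comparisons is replaced by the homogeneous Dirichlet datum $u(0,t)=0$ or $u(1,t)=0$, which already provides a zero boundary contribution.

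The chief difficulty I anticipate is that on $I_i^+$ the nonlinearity $a(x)u^2\geq 0$ works in the wrong direction for a damping argument, so the smallness cannot be transmitted into $I_i^+$ by the dissipative $\l u$ term itself. The Bernoulli supersolution is what quantitatively resolves this: the blow-up time of $g$, namely $1/(M\eta(\l))$, diverges as $\l\da-\infty$ by virtue of Theorem \ref{th3.2}, so the prescribed horizon $T=T(u_0)$ eventually lies safely below it. Once this observation is in place, the remainder amounts to standard parabolic comparison.
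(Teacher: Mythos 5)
Your proof is correct, and it reaches the conclusion by a genuinely different mechanism than the paper at the one step that matters. Both arguments share the same skeleton: control the two lateral boundary values at points of $\O_-$ by the decay results (Theorem \ref{th3.2}, ultimately Corollary \ref{co3.1}), use Sattinger's monotonicity of the trajectory to convert the pointwise-in-$t$ decay into a bound for the supremum over $[0,T]$, and then confine $u$ on the middle rectangle by a parabolic comparison. The difference lies in how the smallness is pushed across the positive bump $I_i^+$, where $a(x)u^2\geq 0$ works against you. The paper keeps the term $\l u$ and observes that as long as $u\leq \eta$ with $\l+\|a\|_\infty\eta<\l/2<0$, one has $\l+au\leq 0$, so $u$ is a subsolution of the pure heat equation; the maximum principle then traps $u$ below its lateral boundary values, and since the admissible threshold $\eta$ can be taken arbitrary once $\l$ is negative enough, a continuation argument keeps $u$ below $\eta$ for as long as it exists. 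You instead discard $\l u\leq 0$ entirely and dominate $u$ by the spatially constant Bernoulli supersolution $g(t)=\eta(\l)/(1-M\eta(\l)t)$ of $\dot g=Mg^2$, whose blow-up time $1/(M\eta(\l))$ diverges as $\l\da-\infty$, so that any prefixed horizon $T<T_\mathrm{max}(u_0)$ eventually lies below it; the linearized comparison for $w=g-u$ with bounded coefficient $M(g+u)$ on the compact closed rectangle is standard and closes the argument. Your route is more quantitative (it yields the explicit bound $u\leq \eta(\l)/(1-MT\eta(\l))$ on the rectangle) and avoids the bootstrap, at the price of being intrinsically confined to a finite time window for each $\l$; the paper's self-improving threshold argument, by contrast, concludes that $u$ stays below any prescribed level $\eta$ for \emph{all} times where the solution is defined, which is slightly stronger than what the theorem asserts. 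Both treatments of the extreme intervals $I_1^+$ and $I_s^+$ coincide: one lateral comparison is replaced by the homogeneous Dirichlet condition.
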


By a simple combinatorial argument, Theorem \ref{th1.1} provides us with a further evidence supporting the  conjecture of \cite{GRLGJDE}. Actually, it proves it under an additional assumption. Indeed, suppose that $a(x)$ satisfies (H$_a$) with $r=n\geq 1$ and $s=n+1$ and, for every $i\in \{1,...,n+1\}$,  let $\t_{\{\l,i\}}$ be a positive solution of
\begin{equation}
\label{1.9}
		\left\{\begin{aligned}
			-u'' &= \lambda u + a^+(x)u^{2}\quad\text{in }(\g_i,\r_i),\\
			u(\g_i) &= u(\r_i) = 0,
		\end{aligned}\right.
\end{equation}	
and consider the subsolution of \eqref{1.1} defined through
$$
  u_0:= \left\{ \begin{array}{lll} \t_{\{\l,i\}} &\quad \hbox{in}\;\; I_i^+, & \qquad
  i\in\{1,...,n+1\}, \\[1ex] 0 &\quad \hbox{in}\;\; I_j^-, & \qquad
  j\in\{1,...,n\}. \end{array}\right.
$$
Suppose that $u(x,t;u_0,\l)$ is globally bounded in time as $\l\da -\infty$. Then, Theorem \ref{th3.3} shows that there exists $\l_c<0$ such that \eqref{1.1} has $2^{n+1}-1$ positive solutions for every  $\l<\l_c$. But the extremely challenging problem of ascertaining whether or not
$u(x,t;u_0,\l)$ is globally bounded as $\l\da -\infty$ remains open
in this paper.
\par
Throughout this paper,
 for any $a(x)$ with $n+1$ positive bumps separated away by
negative ones, we use a code with $n+1$ digits in  $\{0,1\}$, where $1$ means that the solution has a bump localized at the nodal interval indicated by its position in the code, whereas $0$ means that no bump in that position exists. Thus, when, e.g., $a(x)= \sin(3\pi x)$,  we have positive solutions in Figure \ref{FIG:sin3 sol examples} represented by $2$-digit codes, where $00$ stands for the trivial solution, $10$ stands for a solution with a single bump on the left, $01$ stands for a solution with one single bump on the right, and $11$ stands for a positive solution with both bumps around each of the interior maxima of $a(x)$. At the end of this code, called the \emph{type of the solution} in this paper, we will always add a positive integer within  parenthesis, the \emph{Morse index}, i.e., the dimension of the unstable manifold of the positive solution as a steady state of the associated parabolic problem \eqref{1.2}.  The dimension of the unstable manifold of a given steady state solution, say $u$, equals  the number of negative eigenvalues, $\tau$, of the linearized problem
\begin{equation}
\label{1.10}
		\left\{\begin{aligned}
		-v'' & =\l v + 2 a(x)u(x) v + \tau v  \quad\text{ in }(0,1),\\
		v(0) & = v(1) = 0.
		\end{aligned}\right.
\end{equation}	
\par
Although there is a huge amount of literature on bump and multi-bump solutions for nonlinear 
Schr\"{o}dinger equations (see, e.g., Ambrosetti, Badiale and Cingolani \cite{ABC}, del Pino and Felmer \cite{DPFa,DPFb},  Dancer and Wei \cite{DW}, Wei \cite{Wei}, Wang and Zeng \cite{WZ}, Byeon and Tanaka \cite{BT}, among many others), in the existing literature $a(x)$ always is a positive function. So, none of these results can be applied in our general context, which explains why the conjecture of \cite{GRLGJDE} remains open.
\par 
This paper is organized as follows. Section 2 collects the available information
concerning the global structure of the set of positive solutions of \eqref{1.1} paying attention to the detail as some of these results, rather topological,  are not well known by experts yet.
In Section 3 we prove
Theorem \ref{th1.1} by using the theory of metasolutions (see, e.g., \cite{LG16}). Then,
we infer from it Theorem \ref{th3.3}. Finally,  in Sections 4, 5 and 6  we present and discuss the results of our numerical experiments in cases $n=1$, $n=2$ and $n=3$, respectively. In Section 7 we analyze the more sophisticate case when $a$ is given by \eqref{1.4} using $\mu\geq 1$ as the secondary bifurcation parameter. In Section 8 we shortly discuss the necessary numerics to implement the numerical experiments of this paper.  The paper ends with a final discussion carried out in Section 9. Discussing the results of our numerical experiments in this short general presentation seems inappropriate. The readers should enjoy them in their own sections.

\section{Global structure of the component $\mathscr{C}^+$ }

\noindent This section analyzes the local and global behaviors of the component of positive solutions $\mathscr{C}^+$ introduced in Section 1. The next result, of a technical nature, allows us to express, equivalently, \eqref{1.1} as a fixed point equation for a compact operator. As the proof is elementary, we omit it herein.

\begin{lemma}
\label{le2.1}
For every $f \in \mc{C}[0,1]$, the function
\begin{equation}
\label{2.1}
  u(x) =\int_0^x (s-x)f(s)\,ds - x \int_0^1 (s-1)f(s)\,ds
\end{equation}
provides us with the unique solution of the linear boundary value problem
\begin{equation}
\label{2.2}
\left\{ \begin{array}{l} -u''= f \quad \hbox{in}\;\; [0,1], \\
  u(0)=u(1)=0. \end{array}\right.
\end{equation}
\end{lemma}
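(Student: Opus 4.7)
The plan is to prove the lemma by direct verification, splitting the argument into uniqueness and existence.

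For uniqueness, I would observe that if $u_1$ and $u_2$ both solve \eqref{2.2}, then $w:=u_1-u_2$ satisfies $-w''=0$ on $[0,1]$ with $w(0)=w(1)=0$. Since the general solution of $-w''=0$ is affine and the zero boundary data force both coefficients to vanish, one obtains $w\equiv 0$, hence $u_1=u_2$.

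For existence, the strategy is to show that the formula \eqref{2.1} satisfies both the differential equation and the boundary conditions. The boundary conditions are immediate: at $x=0$ the first integral is empty and the second summand carries the factor $x$, while at $x=1$ the two instances of $\int_0^1(s-1)f(s)\,ds$ appear with opposite signs and therefore cancel. To verify the equation, I would rewrite \eqref{2.1} in the equivalent form
$$
  u(x) = \int_0^x s\,f(s)\,ds - x\int_0^x f(s)\,ds - x\int_0^1(s-1)f(s)\,ds,
$$
apply Leibniz's rule in the first two summands, observe that the boundary contributions $xf(x)$ cancel, and deduce
$$
  u'(x) = -\int_0^x f(s)\,ds - \int_0^1(s-1)f(s)\,ds.
$$
A further differentiation then yields $u''(x)=-f(x)$, as required.

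There is no genuine obstacle in this argument. Indeed, formula \eqref{2.1} is nothing but a rewriting of the classical Green's function representation
$$
  u(x) = \int_0^1 G(x,s)f(s)\,ds,\qquad G(x,s) = \min\{x,s\}-x\,s,
$$
for $-u''$ under Dirichlet boundary conditions on $[0,1]$, and the verification reduces to routine bookkeeping in the Leibniz differentiation. The continuity assumption $f\in\mc{C}[0,1]$ is used only to legitimize differentiation under the integral sign and to ensure that $u\in \mc{C}^2[0,1]$ so that the equation holds pointwise.
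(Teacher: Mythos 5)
Your proof is correct, and it is precisely the elementary verification that the paper omits (``As the proof is elementary, we omit it herein''): the expression you obtain for $u'(x)=-\int_0^x f(s)\,ds-\int_0^1(s-1)f(s)\,ds$ is exactly the one the authors display later in the proof of Lemma \ref{le2.2}. Your uniqueness argument and the identification of \eqref{2.1} with the Green's function $G(x,s)=\min\{x,s\}-xs$ are both accurate.
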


According to Lemma \ref{le2.1}, we introduce the linear integral operator $K: \mc{C}[0,1]\to \mc{C}^2[0,1]$ defined, for every $f \in \mc{C}[0,1]$, by
\begin{equation}
\label{2.3}
K f(x):= \int_0^x (s-x)f(s)\,ds - x \int_0^1 (s-1)f(s)\,ds, \quad x\in [0,1].
\end{equation}
Subsequently, for every integer $n\geq 0$, we denote by $\mc{C}^n_0[0,1]$ the closed subspace of
the real Banach space $\mc{C}^n[0,1]$ consisting of all functions $u\in \mc{C}^n[0,1]$ such that
$u(0)=u(1)=0$, and denote $\mc{C}[0,1]:=\mc{C}^0[0,1]$, $\mc{C}_0[0,1]:=\mc{C}_0^0[0,1]$. The next result collects a  pivotal property of the integral operator ${K}$.

\begin{lemma}
\label{le2.2}
$K : \mc{C}[0,1]\to \mc{C}_0^2[0,1]$ is  linear and continuous.
\end{lemma}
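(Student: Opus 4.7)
The plan is to verify three things directly from the explicit formula \eqref{2.3}: linearity, the mapping property into $\mc{C}_0^2[0,1]$, and the norm bound. Linearity is immediate from the linearity of the Riemann integral, so I would just note it in one line. For the mapping into $\mc{C}_0^2[0,1]$, I would first check the boundary values by substituting $x=0$ and $x=1$ into \eqref{2.3}: the first gives $0$ trivially, and the second gives
$$
  Kf(1)=\int_0^1(s-1)f(s)\,ds-\int_0^1(s-1)f(s)\,ds=0.
$$

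Next I would establish the $\mc{C}^2$-regularity by computing the derivatives of \eqref{2.3} directly. Splitting the first integral as $\int_0^x s f(s)\,ds - x\int_0^x f(s)\,ds$ and applying the Leibniz rule gives
$$
  (Kf)'(x)=-\int_0^x f(s)\,ds-\int_0^1(s-1)f(s)\,ds,
$$
from which $(Kf)''(x)=-f(x)$ follows. Since $f\in \mc{C}[0,1]$, this shows $Kf\in \mc{C}^2[0,1]$, and combined with the boundary check we obtain $Kf\in \mc{C}^2_0[0,1]$. Alternatively, one could invoke Lemma \ref{le2.1} for the same conclusion, but the explicit differentiation is needed anyway for the norm estimate.

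Finally, for continuity I would bound each of the three quantities $\|Kf\|_\infty$, $\|(Kf)'\|_\infty$, $\|(Kf)''\|_\infty$ by a universal multiple of $\|f\|_\infty$. The second derivative gives $\|(Kf)''\|_\infty=\|f\|_\infty$ trivially. From the formula for $(Kf)'$, one obtains $\|(Kf)'\|_\infty\leq \|f\|_\infty+\tfrac12\|f\|_\infty$, since $\int_0^1|s-1|\,ds=\tfrac12$. Similarly,
$$
  |Kf(x)|\leq \|f\|_\infty\int_0^x(x-s)\,ds+|x|\|f\|_\infty\int_0^1(1-s)\,ds\leq \|f\|_\infty,
$$
uniformly in $x\in[0,1]$. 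Adding these three estimates yields $\|Kf\|_{\mc{C}^2[0,1]}\leq C\|f\|_\infty$ for an absolute constant $C$, which is the required continuity.

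There is no real obstacle here: the statement is of a purely technical nature and every step reduces to an elementary estimate or to the fundamental theorem of calculus. The only small thing to be a little careful with is handling the boundary term $x\int_0^1(s-1)f(s)\,ds$ consistently when differentiating, so that the expression for $(Kf)'$ is recorded correctly before the second differentiation.
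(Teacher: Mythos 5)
Your proposal is correct and follows essentially the same route as the paper: explicit differentiation of \eqref{2.3} to get $(Kf)'$ and $(Kf)''=-f$, followed by elementary sup-norm bounds on $Kf$, $(Kf)'$, $(Kf)''$ in terms of $\|f\|_\infty$. You are slightly more explicit about the boundary values and obtain sharper constants, but the argument is the same.
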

\noindent \textbf{Proof:}
As the integral is linear, ${K}$ is  linear. Moreover, setting
$u:={K}f$, we have that
$$
  u'(x)=  -\int_0^x f(s)\,ds - \int_0^1 (s-1)f(s)\,ds \quad \hbox{and}\quad u''(x)=-f(x)
$$
for all $x\in [0,1]$. Thus,
$$
  \|u\|_\infty \leq 3 \|f\|_\infty,\quad   \|u'\|_\infty \leq  2 \|f\|_\infty,\quad
  \|u''\|_\infty  \leq \|f\|_\infty.
$$
Therefore, for every $f \in \mc{C}[0,1]$,
$$
  \| {K} f\|_{\mc{C}^2[0,1]}\leq 6 \|f\|_\infty,
$$
which  ends the proof.
\hfill $\Box$ \vspace{0.4cm}

Subsequently, we  consider the canonical injection
\begin{equation}
\label{2.4}
 \jmath : \mc{C}^2_0[0,1] \hookrightarrow \mc{C}^1_0[0,1].
\end{equation}
Thanks to the Ascoli--Arzel\`{a} theorem, it is a linear compact operator. Thus,
\begin{equation}
\label{2.5}
  \mc{K}  := \jmath  {K}|_{\mc{C}_0^1[0,1]} :\; \mc{C}_0^1[0,1]\to\mc{C}_0^1[0,1],
\end{equation}
also is a linear compact operator. Using $\mc{K}$, the problem \eqref{1.1} can be expressed as a fixed point equation for a compact operator, because  $u$ solves \eqref{1.1} if, and only if,
$$
  u = \mc{K} (\l u+ a u^2).
$$
Note that $R[\mc{K}]\subset \mc{C}_0^2[0,1]$, by the definition of $\mc{K}$.
Thus, the solutions of \eqref{1.1} are the zeroes of the nonlinear operator
$\mf{F}:\R\times \mc{C}_0^1[0,1]\to  \mc{C}_0^1[0,1]$ defined by
\begin{equation}
\label{2.6}
  \mf{F}(\l,u):=u-\mc{K}(\l u+a u^2),\qquad (\l,u)\in\R\times \mc{C}_0^1[0,1].
\end{equation}
Setting
\begin{equation}
\label{2.7}
  \mf{L}(\l) u  := u-\l \mc{K} u, \quad \mf{N}(\l,u):=-\mc{K} (au^2), \qquad  (\l,u)\in\R\times \mc{C}_0^1[0,1],
\end{equation}
it is apparent that
$$
   \mf{F}(\l,u)=\mf{L}(\l)u+\mf{N}(\l,u)
$$
satisfies the general structural  requirements of Chapters 2 and 6 of \cite{LG01}, because $\mf{L}(\l)$ is an analytic  compact perturbation of the identity map on $\mc{C}_0^1[0,1]$, $I$, and the nonlinearity is completely continuous, i.e., continuous and compact, and, being a polynomial, also is analytic. In particular, $\mf{L}(\l)$ is Fredholm of index zero for all $\l\in\R$, and $\mf{F}$ is a compact perturbation of the identity map that it is real analytic in $(\l,u)\in\R\times\mc{C}_0^1[0,1]$. Thus, the main theorems of Crandall and Rabinowitz \cite{CR1,CR2}, as well as the unilateral global bifurcation theorem of L\'{o}pez-G\'{o}mez \cite[Th. 6.4.3]{LG01}, can be applied.
\par
The generalized spectrum, $\Sigma(\mf{L})$, of the Fredholm curve $\mf{L}(\l)$ defined in \eqref{2.7}  consists of the set of $\l\in\R$ for which $u = \l \mc{K} u$ for some $u\in \mc{C}_0^1[0,1]$, $u\neq 0$.  Differentiating twice with respect to $x$, this fixed point equation can be equivalently expressed as
\begin{equation}
\label{2.8}
 \left\{ \begin{array}{l} -u''=\l u \quad \hbox{in}\;\; [0,1],\\
 u(0)=u(1)=0.\end{array}\right.
\end{equation}
Thus,
$$
  \Sigma(\mf{L}) =\left\{\s_n\equiv (n\pi)^2\;:\; n\in\N,\;  n\geq 1\right\}.
$$
Moreover,
$$
  N[\mf{L}(\l_n)]=\mathrm{span\,}[\psi_n],\qquad \psi_n(x)=\sin (n\pi x),\quad x\in[0,1].
$$
Subsequently, in order to apply the main theorem of \cite{CR1} at $\s_n=(n\pi)^2$, we fix
$n\geq 1$ and, adopting the notations of \cite[Ch. 2]{LG01}, we set $(\l_0,\v_0)\equiv (\s_n,\psi_n)$.
Then,
$$
  N[\mf{L}_0]=\mathrm{span\,}[\v_0],\qquad \mf{L}_0\equiv \mf{L}(\l_0)=I-\s_n \mc{K},\qquad \mf{L}_1\equiv \mf{L}'(\l_0)= -\mc{K},
$$
and the following transversality condition holds
\begin{equation}
\label{2.9}
  \mf{L}_1(N[\mf{L}_0])\oplus R[\mf{L}_0]=\mc{C}_0^1[0,1].
\end{equation}
On the contrary, assume that $\mf{L}_1 \v_0 \in R[\mf{L}_0]$. Then, there exists $u\in \mc{C}_0^1[0,1]$
such that
$$
  -\mc{K} \v_0=\mf{L}_1\v_0 = \mf{L}_0 u=u-\s_n \mc{K} u
$$
and hence, differentiating twice with respect to $x$, it becomes apparent that
$$
  -\v_0 =- u''-\s_n u.
$$
Consequently, multiplying by $\v_0$ and integrating in $(0,1)$ yields
$$
  -\int_0^1 \v_0^2\,dx = \int_0^1 [(-u''-\s_n u)\v_0]\,dx=\int_0^1 [(-\v_0''-\s_n\v_0)u]\,dx=0,
$$
which is impossible. This shows \eqref{2.9}. Therefore,   as a direct application of the main theorem of Crandall and Rabinowitz \cite{CR1}, the following result of a local nature holds.

\begin{theorem}
\label{th2.1} For any given integer $n\geq 1$, let $Y$ denote the closed
subspace of $\mc{C}_0^1[0,1]$ defined by
$$
  Y:= \Big\{ w\in \mc{C}_0^1[0,1]\;:\; \int_0^1 w(x)\psi_n(x)\,dx =0\Big\}.
$$
Then,   there exist $\eta>0$ and two analytic maps $\l_n : (-\eta,\eta)\to \R$ and $y_n : (-\eta,\eta)\to Y$ such that
\begin{itemize}
\item $\l_n(0)=\s_n$, $y_n(0)=0$;
\item $\mf{F}(\l_n(s),s(\psi_n+y_n(s)))=0$ for all $s\in(-\eta,\eta)$; and
\item  the solutions of the curve $(\l_n(s),s(\psi_n+y_n(s)))$, $|s|<\eta$,  are the unique zeroes of $\mf{F}$, besides $(\l,0)$,  in a neighborhood of  $(\s_n,0)$ in $\R\times \mc{C}_0^1[0,1]$.
\end{itemize}
\end{theorem}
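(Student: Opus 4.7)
The plan is to verify the hypotheses of the Crandall--Rabinowitz bifurcation theorem from a simple eigenvalue (as formulated in \cite{CR1}) and then read off the conclusion, since almost all of the setup has already been carried out in the preceding paragraphs. Concretely, I would check that (i) $\mathfrak{F}:\mathbb{R}\times\mathcal{C}_0^1[0,1]\to\mathcal{C}_0^1[0,1]$ is (real) analytic, (ii) $\mathfrak{L}_0=\mathfrak{F}_u(\sigma_n,0)$ is Fredholm of index zero with one-dimensional kernel spanned by $\psi_n$, and (iii) the transversality condition $\mathfrak{L}_1\psi_n\notin R[\mathfrak{L}_0]$ holds.

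For (i), $\mathfrak{L}(\lambda)=I-\lambda\mathcal{K}$ is affine in $\lambda$ and $\mathfrak{N}(\lambda,u)=-\mathcal{K}(a u^2)$ is a homogeneous quadratic polynomial in $u$ with values in $\mathcal{C}_0^1[0,1]$ by Lemma \ref{le2.2}, so $\mathfrak{F}$ is analytic. For (ii), $\mathfrak{L}_0=I-\sigma_n\mathcal{K}$ is a compact perturbation of the identity (Lemma \ref{le2.2} combined with the compactness of the injection \eqref{2.4}), hence Fredholm of index zero; the identification of $N[\mathfrak{L}_0]$ with $\mathrm{span}[\psi_n]$ follows from the equivalence between $u=\sigma_n\mathcal{K}u$ and the eigenvalue problem \eqref{2.8}, together with the simplicity of each eigenvalue of the Dirichlet Laplacian on $(0,1)$. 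Condition (iii) is precisely \eqref{2.9}, which has been verified in the excerpt by the self-adjointness argument with the test function $\psi_n$.

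With these three ingredients in place, I would invoke Crandall--Rabinowitz to get the local parametrization: decompose $\mathcal{C}_0^1[0,1]=N[\mathfrak{L}_0]\oplus Y$ with $Y=\{w\in \mathcal{C}_0^1[0,1]\,:\,\int_0^1 w\psi_n\,dx=0\}$ (this is a valid topological complement of $\mathrm{span}[\psi_n]$ in $\mathcal{C}_0^1[0,1]$, and by the Fredholm alternative $Y$ also complements the one-dimensional space $\mathfrak{L}_1(N[\mathfrak{L}_0])=\mathrm{span}[\mathcal{K}\psi_n]=\mathrm{span}[\psi_n/\sigma_n]$ inside $R[\mathfrak{L}_0]^{\perp}\oplus R[\mathfrak{L}_0]$). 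Writing a nontrivial solution close to $(\sigma_n,0)$ in the form $u=s(\psi_n+y)$ with $y\in Y$ and projecting $\mathfrak{F}(\lambda,s(\psi_n+y))=0$ parallel to $\mathrm{span}[\psi_n]$ and onto $Y$, one obtains a single analytic equation in $(\lambda,y,s)$ whose linearization at $(\sigma_n,0,0)$ is an isomorphism precisely because of the transversality condition \eqref{2.9}. The analytic implicit function theorem then yields the analytic maps $\lambda_n(s)$ and $y_n(s)$ with $\lambda_n(0)=\sigma_n$, $y_n(0)=0$, and $\mathfrak{F}(\lambda_n(s),s(\psi_n+y_n(s)))=0$.

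The only mildly delicate step, and the one I would treat most carefully, is the uniqueness clause: that, besides the trivial branch $(\lambda,0)$ and the curve $(\lambda_n(s),s(\psi_n+y_n(s)))$, there are no other zeroes of $\mathfrak{F}$ in a neighborhood of $(\sigma_n,0)$. This is the standard Crandall--Rabinowitz rescaling argument: for a nontrivial solution $(\lambda,u)$ near $(\sigma_n,0)$, write $u=s(\psi_n+y)$ with $y\in Y$ uniquely (possible for $s\neq 0$ by the decomposition), divide $\mathfrak{F}(\lambda,u)=0$ by $s$, and use the implicit function theorem on the resulting equation to conclude that $(\lambda,y)$ coincides with $(\lambda_n(s),y_n(s))$. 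No genuine obstacle is expected here, since all the functional-analytic ingredients (analyticity, Fredholmness, simplicity of the eigenvalue, transversality) have already been established in the excerpt.
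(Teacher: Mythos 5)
Your proposal is correct and follows essentially the same route as the paper: the paper's argument consists precisely of verifying that $\mf{F}$ is an analytic compact perturbation of the identity, that $N[\mf{L}_0]=\mathrm{span\,}[\psi_n]$ via the equivalence with \eqref{2.8}, and that the transversality condition \eqref{2.9} holds by the self-adjointness/integration-by-parts argument, after which Theorem \ref{th2.1} is read off as a direct application of the main theorem of Crandall and Rabinowitz \cite{CR1}. The only difference is that you additionally sketch the internal Lyapunov--Schmidt and rescaling mechanics of that theorem, which the paper simply cites.
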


Since $y_n(0)=0$,  the function $u(s)=s(\psi_n+y_n(s))\in \mc{C}_0^2[0,1]$ has the same nodal behavior as $\psi_n$ for sufficiently small $s\neq 0$, because $y_n(s)\sim 0$ in $\mc{C}_0^1[0,1]$ and the zeroes of $\psi_n$ are simple. Therefore, by Theorem \ref{th2.1}, it is apparent that, for every $n\geq 1$,  \eqref{1.1} has a curve of solutions with $n-1$ zeroes bifurcating from $u=0$ at $\l=\s_n$, regardless the nature of the weight function $a\in \mc{C}[0,1]$. In particular, by the local uniqueness result at $(\s_n,0)$, the positive solution of \eqref{1.1} can only bifurcate from $u=0$ at the critical value of the parameter $\s_1=\pi^2$. Next, we will analyze the local nature of this bifurcation from $(\l,u)=(\s_1,0)$. Setting $D_{1} :=\l_1'(0)$, $D_{2} :=\l_1''(0)$, $w_{1} := y'_1(0)$ and $w_{2} := y''_1(0)$, we have that
$$
    \l_1(s)= \s_1+s D_{1} + s^2 D_{2} +O(s^3),\qquad y_{1}(s)=sw_{1} + s^2 w_{2} +O(s^3),
$$
as $s\to 0$. By Theorem \ref{th2.1}, we already know that
$$
   \begin{aligned}
   -s (\psi_1+s w_{1} + s^2 w_{2} &+O(s^3))''  = [\s_1+s D_{1} + s^2 D_{2} +O(s^3)\\ &+a(x)s(\psi_1+s w_{1} + s^2 w_{2} +O(s^3))]s(\psi_1+s w_{1} + s^2 w_{2} +O(s^3))
   \end{aligned}
$$
for $s\simeq 0$. Thus, dividing by $s$ yields
\begin{equation}
\label{2.10}
 \begin{aligned}
 	-(\psi_1+s w_{1} + s^2 w_{2} &+O(s^3))''  = [\s_1+s D_{1} + s^2 D_{2} +O(s^3)\\ &+a(x)s(\psi_1+s w_{1} + s^2 w_{2} +O(s^3))](\psi_1+s w_{1} + s^2 w_{2} +O(s^3)).
 	\end{aligned}
\end{equation}
Particularizing \eqref{2.10} at $s=0$, yields to  $-\psi_1''=\s_1\psi_1$, which holds true
by the definition of $\psi_1$. Identifying terms of the first order in $s$, it follows from \eqref{2.10} that
\begin{equation}
\label{2.10.1}
   -w_{1}'' = \s_1 w_{1} +(D_{1}+a(x)\psi_1) \psi_1.
\end{equation}
Therefore, multiplying by $\psi_1$ this equation and integrating in $(0,1)$ yields
\begin{equation}
\label{2.11}
  D_{1} = -\frac{\int_0^1 a(x)\psi_1^3(x)\,dx}{\int_0^1 \psi_1^2(x)\,dx}=-2 \int_0^1a(x)\sin^3(\pi{x})\,dx.
\end{equation}
If $D_{1}\neq 0$, then, since $a(x)$ changes the sign, the \emph{bifurcation direction}, $D = D_1$, can take any value, either positive, or negative. Actually, the bifurcation to positive solutions is supercritical if $D>0$, while it is subcritical if $D<0$. If $D_1=0$, then we need to compute $D_{2}$.
Suppose $D_1=0$. Then, similarly as above, we can collect terms of the second order in $s$ from \eqref{2.10} to get
\begin{equation*}
-w_{2}'' = \s_1 w_{2} + D_{2}\psi_{1} + a(x)w_{1}\psi_{1}
\end{equation*}
and hence,
\begin{equation}
\label{2.11.1}
  D_{2} = -\frac{\int_0^1 a(x)w_{1}(x)\psi_1^2(x)\,dx}{\int_0^1 \psi_1^2(x)\,dx}=-2 \int_0^1 a(x)w_{1}(x)\sin^2(\pi{x})\,dx.
\end{equation}
Thus, to get the exact value of $D_{2}$, we need to determine $w_{1}(x)$. It is
the unique solution of \eqref{2.10.1}, subject to Dirichlet boundary conditions, in
the closed subspace $Y$. Since $D_1=0$, the general solution of \eqref{2.10.1} is given by
\begin{equation*}
\begin{aligned}
w_{1}(x) &= \cos(\pi x)\left(c_{1} + \frac{1}{\pi}\int_{0}^{x} a(s)\sin^3(\pi s)\; ds \right)\\
&+ \sin(\pi x)\left(c_{2} - \frac{1}{\pi}\int_{0}^{x} a(s)\sin^2 (\pi s) \cos(\pi s)\; ds \right).
\end{aligned}
\end{equation*}
As $0=w_{1}(0) = c_{1}$, after some adjustment we find that
\begin{equation*}
w_{1}(x) = c_{2}\sin(\pi x) + \frac{1}{\pi} \int_{0}^{x} a(s) \sin^2(\pi s) \sin(\pi s - \pi x)  \; ds.
\end{equation*}
To find out $c_{2}$, we recall that $y_{1}(s)\in Y$ for $s\simeq 0$. Since $Y$ is closed, this entails that
$$
   w_1 = \lim_{s\to 0} \frac{y_1(s)}{s}\in Y.
$$
Thus,
\begin{align*}
0= & \int_{0}^{1} w_{1}(x)\sin(\pi x) \; dx \\ & = c_{2} \int_{0}^{1} \sin^2(\pi x) \; dx + \int_{0}^{1} \frac{\sin(\pi x)}{\pi} \int_{0}^{x} a(s) \sin^2(\pi s) \sin(\pi s - \pi x)  \; ds \; dx
\end{align*}
and therefore,
\begin{equation*}
c_{2} = -2 \int_{0}^{1} \frac{\sin(\pi x)}{\pi} \int_{0}^{x} a(s) \sin^2(\pi s) \sin(\pi s - \pi x)  \; ds \; dx.
\end{equation*}
This way we can compute the \emph{bifurcation direction} $D = D_{2}$ when $D_{1} = 0$. This situation arises in Sections \ref{SEC:k=2}, \ref{SEC:k=3}. Should it be $D_{2}=0$, then it is necessary to use higher order terms of $\lambda_{1}(s)$ and $y_{1}(s)$.
\par
Next, we will use the Schauder formula for determining the
Leray--Schauder degree $\mathrm{Deg\,}(\mf{L}(\l),B_R)$, $R>0$,  for every $\l \in\R\setminus \Sigma(\mf{L})$, where $B_R$ stands for the open ball of radius $R$ centered at the origin in the
real Banach space $\mc{C}_0^1[0,1]$. According to it, we already know that
\begin{equation}
\label{2.12}
  \mathrm{Deg\,}(\mf{L}(\l),B_R) = (-1)^{m(\mf{L}(\l))},
\end{equation}
where $m(\mf{L}(\l))$ stands for the sum of the algebraic multiplicities of the negative
eigenvalues of $\mf{L}(\l)$. To determine $m(\mf{L}(\l))$, we will find out all the values of $\mu\in\R$ for which there exists $u\in \mc{C}_0^1[0,1]$, $u\neq 0$, such that
\begin{equation}
\label{2.13}
  \mf{L}(\l)u= u-\l \mc{K} u =\mu u.
\end{equation}
Since $\mf{L}(\l)$ is invertible for all $\l \in (0,\s_1)$, by the homotopy invariance of
the degree,
$$
  d_1 \equiv \mathrm{Deg\,}(\mf{L}(\l),B_R)\quad \hbox{is constant on}\;\;  \l\in (0,\s_1).
$$
Similarly, for every $k\geq 2$,
$$
  d_{k+1} \equiv \mathrm{Deg\,}(\mf{L}(\l),B_R)\quad \hbox{is constant on}\;\;  \l\in (\s_{k},\s_{k+1}).
$$
The equation \eqref{2.13} can be expressed as
$$
  \mc{K} u = \frac{1-\mu}{\l}u,
$$
or, equivalently, by inverting $\mc{K}$,
$$
  -u'' = \frac{\l}{1-\mu}u \quad \hbox{in}\;\; [0,1].
$$
Note that, due to \eqref{2.13},   $\mc{K}u=0$ if $\mu=1$, because $\l>0$, and hence $u=0$. Thus, $\mu\neq 1$ and hence, we can divide by $1-\mu$. Consequently, there should exist some integer $n\geq 1$ such that
$$
   \frac{\l}{1-\mu} = \s_n
$$
for some $n\geq 1$. Therefore, the set of (classical) eigenvalues of $\mf{L}(\l)$ is given by
\begin{equation}
\label{2.14}
  \s(\mf{L}(\l))=\Big\{ \mu_n := 1-\frac{\l}{\s_n}\;:\; n\in\N, \; n\geq 1\Big\}.
\end{equation}
On the other hand, for every $\l>0$ and any integer $n\geq 1$, the eigenvalue $\mu_n:= 1-\frac{\l}{\s_n}$ is an algebraically simple eigenvalue of $\mf{L}(\l)=I-\l \mc{K}$, because
\begin{equation}
\label{2.15}
  N[I-\l \mc{K}-\mu_n I]= \mathrm{span\,}[\psi_n]\quad\hbox{and}\quad
  \psi_n \notin R[I-\l \mc{K}-\mu_n I].
\end{equation}
Indeed, arguing by contradiction, assume that, for some $u\in \mc{C}_0^1[0,1]$,
$$
   \psi_n = u-\l \mc{K} u -\mu_n u = (1-\mu_n)u-\l \mc{K} u.
$$
Then,
$$
(1-\mu_n)u = \l \mc{K} u +\psi_n \in \mc{C}_0^2[0,1]
$$
and, since $\mu_n\neq 1$, $u\in \mc{C}_0^2[0,1]$. Thus, differentiating twice with respect to $x$ yields
$$
  -(1-\mu_n) u'' =\l u -\psi_n''= \l u + \s_n \psi_n.
$$
Equivalently, by definition of $\mu_n$,
$$
  -\frac{\l}{\s_n}u'' -\l u = \s_n \psi_n
$$
and hence,
$$
  - u'' -\s_n u =\frac{\s_n^2}{\l}\psi_n.
$$
Finally, multiplying by $\psi_n$ and integrating in $(0,1)$, we find that
$$
  \frac{\s_n^2}{\l}\int_0^1 \psi_n^2 = \int_0^1 [(-u''-\s_n u)\psi_n]=0,
$$
which is impossible. This ends the proof of \eqref{2.15}. Therefore, \eqref{2.12} becomes
\begin{equation}
\label{2.16}
  \mathrm{Deg\,}(\mf{L}(\l),B_R) = (-1)^{n(\l)},
\end{equation}
where $n(\l)$ stands for the number of negative eigenvalues of \eqref{2.14}. Assume $\l<\s_1$. Then, $\frac{\l}{\s_1}<1$ and, since  $\s_n\geq \s_1$ for each $n\geq 1$, we find that
$$
  1-\frac{\l}{\s_n}\geq 1-\frac{\l}{\s_1}>0.
$$
Thus, $n(\l)=0$ and \eqref{2.16} entails $\mathrm{Deg\,}(\mf{L}(\l),B_R)=1$.
Assume $\s_1<\l<\s_2$. Then,
$$
  1-\frac{\l}{\s_1} < 0 < 1-\frac{\l}{\s_2}<1-\frac{\l}{\s_3}<\cdots
$$
and hence, $n(\l)=1$. Therefore, by \eqref{2.16}, $\mathrm{Deg\,}(\mf{L}(\l),B_R)=-1$. Obviously,
every time that $\l$ crosses an additional eigenvalue $\s_n$, $n(\l)$ increases by $1$. Therefore,
for every integer $k\geq 1$,
\begin{equation}
\label{4.43}
  \mathrm{Deg\,}(\mf{L}(\l),B_R)=\left\{ \begin{array}{ll} 1 & \quad \hbox{if}\;\;
  \l \in (\s_{2k},\s_{2k+1}),\\ -1 & \quad \hbox{if} \;\; \l \in (\s_{2k+1},\s_{2k+2}). \end{array}\right.
\end{equation}
Consequently, according to \cite[Th. 6.2.1]{LG01}, the next result holds. We are denoting by $\mc{S}$ the set of non-trivial solutions of \eqref{1.1}, i.e.,
$$
  \mc{S}=\{(\l,u)\in\mf{F}^{-1}(0)\;:\; u\neq 0\}\cup (\{(\s_n,0)\;:\; n\geq 1\} \subset\R\times \mc{C}_0^1[0,1],
$$
where $\mf{F}(\l,u)$ is the operator introduced in \eqref{2.6}.

\begin{theorem}
\label{th2.2}
For every $n\geq 1$, there exists a component of $\mc{S}$, $\mathscr{C}_n$, such that $(\s_n,0)\in \mathscr{C}_n$. Moreover, for sufficiently small $\e>0$,
$$
  B_\e(\s_n,0)\cap \mathscr{C}_n =\{(\l_n(s),s(\psi_n+y_n(s)))\;:\; s\sim 0\},
$$
where $(\l_n(s),s(\psi_n+y_n(s)))$, $s\sim 0$, is the analytic curve given by Theorem \ref{th2.1}.
\end{theorem}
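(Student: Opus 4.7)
My plan is to deduce Theorem \ref{th2.2} from the global bifurcation theorem \cite[Th.\,6.2.1]{LG01}, whose hypotheses have been checked piecemeal throughout this section. The strategy has two parts: first, produce a connected component of $\mc{S}$ through $(\s_n, 0)$ by a degree-theoretic (Rabinowitz-type) argument; second, identify it locally with the analytic curve supplied by Theorem \ref{th2.1}.

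For the existence of the component, I would assemble the ingredients already in place. The operator $\mf{F} = \mf{L} + \mf{N}$ is a real-analytic compact perturbation of the identity on $\mc{C}_0^1[0,1]$ (Lemma \ref{le2.2} combined with the compactness of $\jmath$), and the nonlinear part $\mf{N}(\l, u) = -\mc{K}(a u^2)$ is $O(\|u\|^2)$ as $u \to 0$. Hence for each $\l \notin \Sigma(\mf{L}) = \{\s_n\}_{n\geq 1}$ the linearization $\mf{L}(\l)$ is invertible and $u = 0$ is an isolated zero of $\mf{F}(\l, \cdot)$, whose Leray--Schauder fixed-point index coincides with $\mathrm{Deg\,}(\mf{L}(\l), B_R)$ for all sufficiently small $R > 0$. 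By \eqref{4.43} this index jumps by $\pm 2$ each time $\l$ crosses some $\s_n$, which is exactly the classical Rabinowitz bifurcation condition. Applying \cite[Th.\,6.2.1]{LG01} at each $\s_n$ then yields a connected component $\mathscr{C}_n \subset \mc{S}$ with $(\s_n, 0) \in \mathscr{C}_n$.

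To pin down the local picture, I would choose $\e > 0$ small enough that $B_\e(\s_n, 0)$ lies inside the neighborhood in which Theorem \ref{th2.1} is valid and contains no other point of $\{(\s_m, 0) : m \geq 1\}$. Theorem \ref{th2.1} then guarantees that the zeros of $\mf{F}$ inside this ball consist of precisely the trivial line $\{(\l, 0)\}$ and the analytic curve $\mathscr{A}_n := \{(\l_n(s), s(\psi_n + y_n(s))) : |s| < \eta\}$. The trivial line meets $\mc{S}$ inside $B_\e(\s_n, 0)$ only at $(\s_n, 0)$, and that point already belongs to $\mathscr{A}_n$ (at $s = 0$), whence $\mathscr{C}_n \cap B_\e(\s_n, 0) \subset \mathscr{A}_n$. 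The reverse inclusion follows because $\mathscr{A}_n$ is connected, meets $\mathscr{C}_n$ at $(\s_n, 0)$, and is contained in $\mc{S}$.

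The only genuinely delicate point is the restricted definition of $\mc{S}$: excluding the rest of the $\l$-axis is precisely what prevents $\mathscr{C}_n$ from collapsing onto the trivial branch inside the ball. Every remaining analytic input --- compactness, the transversality \eqref{2.9}, and the degree jump \eqref{4.43} --- has been produced above, so the proof reduces to a careful assembly of these facts rather than any substantive new obstacle.
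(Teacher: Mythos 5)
Your proposal is correct and follows essentially the same route as the paper: after the preparatory work of Section 2 (the compact-perturbation structure of $\mf{F}$, the transversality condition \eqref{2.9}, and the degree change \eqref{4.43} across each $\s_n$), the paper simply invokes \cite[Th.\,6.2.1]{LG01} to produce $\mathscr{C}_n$ and reads off the local description from Theorem \ref{th2.1}, exactly as you do. Your additional remarks on the local identification and the role of the definition of $\mc{S}$ are accurate but only make explicit what the paper leaves implicit.
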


As the nodes of the solutions of \eqref{1.1} are simple, it is easily seen that
the number of nodes of the solutions of \eqref{1.1} vary continuously in $\R\times \mc{C}_0^1[0,1]$
and hence, since the solutions of $\mathscr{C}_n$ bifurcating from
$u=0$ at $\l =\s_n$ possess $n-1$ interior nodes in $(0,1)$, $\mathscr{C}_n\setminus\{(\s_n,0)\}$
consists of solutions with $n-1$ interior nodes. Therefore,
$$
  \mathscr{C}_n\cap \mathscr{C}_m=\emptyset \quad \hbox{if}\;\; n\neq m.
$$
Consequently, thanks to the global alternative of Rabinowitz \cite{Ra1},
$\mathscr{C}_n$ is  unbounded in $\R\times \mc{C}_0^1[0,1]$ for each integer $n\geq 1$.
Note that $\mathscr{C}^+$ is the subcomponent of $\mathscr{C}_1$ consisting of the positive solutions $(\l,u)\in\mathscr{C}_1$. According to \cite[Th. 6.4.3]{LG01}, $\mathscr{C}^+$ also is unbounded in
$\R\times \mc{C}_0^1[0,1]$. A further rather standard compactness argument, whose details are omitted here, shows that actually  $\mathscr{C}^+$ is unbounded in $\R\times \mc{C}_0[0,1]$. This information can be summarized into the next result.

\begin{theorem}
\label{th2.3}
The component $\mathscr{C}^+$ is unbounded in $\R \times \mc{C}_0[0,1]$. Moreover,
$\l=\pi^2$ if $(\l,0)\in \mathscr{\bar C}^+$. Furthermore, $\mathscr{C}^+$ bifurcates supercritically from $u=0$ at $\l=\pi^2$  if $D>0$, while it does it subcritically if $D<0$, where $D$ is given by \eqref{2.11} or \eqref{2.11.1}.
\end{theorem}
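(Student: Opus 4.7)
The plan is to assemble Theorem \ref{th2.3} from Theorems \ref{th2.1}--\ref{th2.2} plus the unilateral global alternative of \cite[Th. 6.4.3]{LG01}, and then to upgrade the topology via the fixed point formulation \eqref{2.6}.

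First I would apply the unilateral global theorem at $\s_1=\pi^2$. Since $\psi_1(x)=\sin(\pi x)>0$ on $(0,1)$ and $y_1(s)\to 0$ in $\mc{C}_0^1[0,1]$ as $s\to 0$, the analytic curve of Theorem \ref{th2.1} produces strictly positive solutions $s(\psi_1+y_1(s))$ for small $s>0$, which therefore belong to the positive subcomponent $\mathscr{C}^+$ of $\mathscr{C}_1$. The unilateral alternative \cite[Th. 6.4.3]{LG01} then yields that $\mathscr{C}^+$ is either unbounded in $\R\times \mc{C}_0^1[0,1]$ or else meets the trivial branch at a further point $(\s_n,0)$. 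The latter is ruled out by the nodal discussion already carried out: every nontrivial solution close to $(\s_n,0)$ inherits, through Theorem \ref{th2.1}, the $n-1$ simple interior zeros of $\psi_n$, which is incompatible with positivity for $n\geq 2$. This simultaneously yields the unboundedness of $\mathscr{C}^+$ in $\R\times \mc{C}_0^1[0,1]$ and the identity $\l=\pi^2$ whenever $(\l,0)\in\overline{\mathscr{C}^+}$.

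Next, to upgrade the unboundedness to $\R\times \mc{C}_0[0,1]$, I would argue by contradiction. If $\mathscr{C}^+$ were bounded in that space, fix $M>0$ with $|\l|+\|u\|_\infty\leq M$ on $\mathscr{C}^+$. Then $\l u+a(x)u^2$ is uniformly bounded in $\mc{C}[0,1]$, and Lemma \ref{le2.2} applied to the identity $u=\mc{K}(\l u+au^2)$ yields a uniform bound in $\mc{C}_0^2[0,1]$, hence in $\mc{C}_0^1[0,1]$, contradicting the previous step. Finally, the direction of bifurcation is read off from the expansion $\l_1(s)-\pi^2=sD_1+s^2D_2+O(s^3)$ computed in \eqref{2.11}--\eqref{2.11.1}: since $\mathscr{C}^+$ corresponds to $s>0$ small, the sign of the first non-vanishing coefficient $D$ governs whether $\l_1(s)>\pi^2$ (supercritical) or $\l_1(s)<\pi^2$ (subcritical).

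The step I expect to be most delicate is the nodal-rigidity argument that excludes the ``return to trivial'' alternative. Its essential ingredient is that near a simple zero of $\psi_n$, full $\mc{C}^1$-closeness of $y_n(s)$ to $0$ (not merely $\mc{C}^0$-closeness) is what keeps the zero simple and forbids the creation of extra nodes on the bifurcating branch. This is precisely why the ambient space in the global alternative has to be $\R\times \mc{C}_0^1[0,1]$ and why the upgrade to $\R\times \mc{C}_0[0,1]$ must be postponed until after the unilateral step, via the bootstrap provided by Lemma \ref{le2.2}.
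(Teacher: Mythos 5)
Your proposal is correct and follows essentially the same route as the paper: the Crandall--Rabinowitz local curve plus nodal rigidity to confine bifurcation of positive solutions to $\s_1=\pi^2$, the unilateral global alternative of \cite[Th. 6.4.3]{LG01} for unboundedness in $\R\times \mc{C}_0^1[0,1]$, and the sign of the first non-vanishing coefficient in the expansion of $\l_1(s)$ for the bifurcation direction. Your only addition is to make explicit, via the identity $u=\mc{K}(\l u+au^2)$ and Lemma \ref{le2.2}, the ``rather standard compactness argument'' that the paper omits when upgrading the unboundedness from $\R\times \mc{C}_0^1[0,1]$ to $\R\times \mc{C}_0[0,1]$.
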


In addition, by the a priori bounds of Amann and L\'{o}pez-G\'{o}mez \cite{ALG}, the next result holds.

\begin{theorem}
\label{th2.4}
The component $\mathscr{C}^+$ is uniformly bounded on any compact subinterval of $\l\in\R$, and
\eqref{1.1} cannot admit a positive solution for sufficiently large $\l$. Thus,
$$
    (-\infty,\pi^2)\subset \mathcal{P}_\l (\mathscr{C}^+).
$$
Moreover, if \eqref{1.1} admits a positive solution, $(\l_0,u_0)$, with
$\l_0>\pi^2$, then it admits at least two positive solutions for every $\l \in (\pi^2,\l_0)$.
\end{theorem}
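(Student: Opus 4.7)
I would handle the three assertions of the theorem in order: a priori boundedness, non-existence for large $\lambda$ (from which the projection claim falls out), and finally the multiplicity statement.

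For the a priori boundedness, I would invoke directly the estimates of Amann and L\'{o}pez-G\'{o}mez \cite{ALG}, tailored to superlinear indefinite problems of type \eqref{1.1}; combined with elliptic regularity on the fixed-point formulation \eqref{2.6}, they force $\mathscr{C}^+$ to lie in a bounded set of $\mathcal{C}_0^1[0,1]$ uniformly over any compact $\lambda$-interval.

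For non-existence at large $\lambda$, my approach is an eigenvalue comparison on any positivity interval $I_i^+=(\gamma_i,\rho_i)$ of $a$. There $a\geq 0$, so any positive solution $u$ of \eqref{1.1} satisfies $-u''\geq\lambda u$ on $I_i^+$. Testing against the positive Dirichlet principal eigenfunction $\psi_i$ of $-\psi''=\mu_1(I_i^+)\psi$ on $I_i^+$ and integrating by parts twice, the boundary contributions $u(\rho_i)\psi_i'(\rho_i)-u(\gamma_i)\psi_i'(\gamma_i)$ are non-positive (since $\psi_i'(\rho_i)\leq 0$, $\psi_i'(\gamma_i)\geq 0$ and $u>0$ at the endpoints of $I_i^+$), which yields $\lambda\leq \mu_1(I_i^+)=\pi^2/(\rho_i-\gamma_i)^2$. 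The inclusion $(-\infty,\pi^2)\subset\mathcal{P}_\lambda(\mathscr{C}^+)$ then follows from Theorem \ref{th2.3} together with the two previous facts: $\mathscr{C}^+$ is unbounded, bounded on compact $\lambda$-intervals, and bounded above in $\lambda$, so its connected $\lambda$-projection must extend to $-\infty$ and also contain $\pi^2$ in its closure.

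For the multiplicity statement, fix $\lambda^*\in(\pi^2,\lambda_0)$. Since $\lambda^*>\pi^2$, the function $\varepsilon\psi_1$ is a strict subsolution of \eqref{1.1} at $\lambda^*$ for small $\varepsilon>0$. Since $\lambda^*<\lambda_0$ and $u_0>0$ solves \eqref{1.1} at $\lambda_0$, $u_0$ is a strict supersolution at $\lambda^*$, and after possibly shrinking $\varepsilon$ we have $\varepsilon\psi_1\leq u_0$. The sub-supersolution method then provides a minimal positive solution $\theta_{\lambda^*}\in[\varepsilon\psi_1,u_0]$ of \eqref{1.1} at $\lambda^*$. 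To produce a genuinely second positive solution I would run a Leray-Schauder degree argument on a bounded open subset $U\subset\mathcal{C}_0^1[0,1]$ containing the whole a priori range of positive solutions but staying away from $u=0$. The local index of $\theta_{\lambda^*}$ is $+1$ because it is linearly stable, i.e., the principal eigenvalue of the linearization \eqref{1.10} at $\theta_{\lambda^*}$ is strictly positive, as is typical for minimal positive solutions obtained by monotone iteration. On the other hand, the total degree of $\mathfrak{F}(\lambda^*,\cdot)$ on $U$ can be made to vanish by homotoping $\lambda^*$ to a value beyond the non-existence threshold established above. The discrepancy forces a second positive fixed point, distinct from $\theta_{\lambda^*}$. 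The hardest step will be making the degree computation rigorous while excising the trivial solution and the bifurcation point $(\pi^2,0)$ from the picture; this requires carefully tracking how positive solutions enter $U$ through the local analytic branch of Theorem \ref{th2.1}, in order to guarantee that no solution crosses $\partial U$ during the homotopy.
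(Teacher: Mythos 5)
The paper offers no proof of this theorem at all: it is stated as a direct consequence of the a priori bounds of Amann and L\'{o}pez-G\'{o}mez \cite{ALG}, so your proposal is really a reconstruction of the argument from that reference rather than a parallel to anything in the text. As such a reconstruction it is essentially sound. The eigenvalue comparison on a positivity interval $I_i^+$ is correct and gives the clean bound $\l\leq \pi^2/(\r_i-\g_i)^2$ (the boundary terms have the sign you claim because $\psi_i$ vanishes at the endpoints while $u\geq 0$ there), and the deduction of $(-\infty,\pi^2)\subset\mc{P}_\l(\mathscr{C}^+)$ from unboundedness of $\mathscr{C}^+$, boundedness on compact $\l$-intervals, and the upper bound on $\l$ is the standard connectedness argument. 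The sub/supersolution pair $(\e\psi_1,u_0)$ at $\l^*\in(\pi^2,\l_0)$ is also correctly verified.

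The one step I would not let pass as written is the claim that the local index of the minimal solution $\t_{\l^*}$ is $+1$ \emph{because it is linearly stable}. Minimal solutions produced by monotone iteration are stable from below, but the principal eigenvalue of the linearization \eqref{1.10} at $\t_{\l^*}$ may vanish (this is exactly what happens at the turning point $\l_t$ in Theorem \ref{th2.5}(d)), in which case the solution is degenerate and its local index is not defined by linearization. The standard repair, and the one used in \cite{ALG}, is to compute the fixed point index of the whole open order interval determined by the strict sub- and supersolution: Amann's theorem gives index $+1$ for that set without any nondegeneracy hypothesis on $\t_{\l^*}$. Combined with total index $0$ (obtained by the $\l$-homotopy past the non-existence threshold, which is admissible on $[\l^*,\bar\l]\subset(\pi^2,\infty)$ since there positive solutions are bounded above by the a priori estimates and bounded away from $0$ because $\l=\pi^2$ is the only possible bifurcation value from $u=0$ for positive solutions), this forces a second positive solution outside the order interval. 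You should also say a word about why the degree only counts positive solutions: one works with the fixed point index in the positive cone after rewriting the equation as $-u''+Mu=(\l+M)u+au^2$ with $M$ large enough that the right-hand side is nonnegative and monotone on the relevant order interval, so that the associated compact map preserves the cone. With these two adjustments your outline is a complete proof.
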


These findings can be complemented with the theory of G\'{o}mez-Re\~{n}asco and L\'{o}pez-G\'{o}mez \cite{GRLGJDE,GRLGDIE}, later refined in \cite{LG16}, up to characterize the existence of linearly stable positive solutions of \eqref{1.1} thorough the sign of $D$. Indeed, by \cite[Cor. 9.10]{LG16}, any positive
solution of \eqref{1.1} is linearly unstable if $D\leq 0$, and actually, due to \cite[Pr. 9.2]{LG16}, \eqref{1.1} cannot admit a positive solution $(\l,u)$ with $\l \geq \pi^2$ in such case. Thus,
$$
  \mathcal{P}_\l (\mathscr{C}^+)=(-\infty,\pi^2)
$$
if $D\leq 0$. Moreover, \eqref{1.1} admits some stable positive solution if, and only if,
$D>0$ and, in such case, the results of \cite[Sec. 9.2-4]{LG16}, provide us with the next one.

\begin{theorem}
\label{th2.5}
Assume $D>0$. Then, there exists $\l_t>\pi^2$ such that \eqref{1.1} cannot admit a positive solution
if $\l > \l_t$, and
$$
  \mathcal{P}_\l (\mathscr{C}^+)=(-\infty,\l_t].
$$
Moreover,
\begin{enumerate}
\item[{\rm (a)}] Any positive solution of \eqref{1.1} with $\l\leq \pi^2$ is linearly unstable.
\item[{\rm (b)}] For every $\l \in (\pi^2,\l_t]$, the minimal positive solution of \eqref{1.1},
$(\l,\t^\mathrm{min}_\l)$, is the unique stable positive solution of \eqref{1.1}.
Moreover, these solutions are linearly stable if $\l\in(\pi^2,\l_t)$. Thus, they are local exponential attractors of \eqref{1.2}.
\item[{\rm (c)}]  For every $\l \in (\pi^2,\l_t)$, \eqref{1.1} possesses, at least, two positive solutions: one linearly stable and another one unstable.
\item[{\rm (d)}] $(\l_t,\t^\mathrm{min}_{\l_t})$ is the unique positive solution of \eqref{1.1} at
$\l=\l_t$, and it is linearly neutrally stable. Moreover, the set of positive solutions of \eqref{1.1} in a neighborhood of $(\l_t,\t^\mathrm{min}_{\l_t})$ consists of a quadratic subcritical turning point whose lower half-curve is filled in by
linearly stable positive solutions, while its upper half-curve consists of unstable solutions
with one-dimensional unstable manifold.
\item[{\rm (e)}] The map $\l\to \t^\mathrm{min}_{\l}$, $(\s_1,\l_t)\to \mc{C}_0^1[0,1]$, is analytic and
$$
  \lim_{\l\ua \l_t} \t^\mathrm{min}_{\l} = \t^\mathrm{min}_{\l_t}.
$$
\end{enumerate}
\end{theorem}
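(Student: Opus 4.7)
The strategy is to follow the program of \cite[Sec.~9.2--9.4]{LG16}: introduce the minimal positive solution, analyze its linear stability through the principal eigenvalue of \eqref{1.10}, and identify $(\l_t,\t^{\mathrm{min}}_{\l_t})$ as a subcritical fold by applying the Crandall--Rabinowitz fold theorem at the degenerate point where that eigenvalue vanishes.

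First, I would set $\l_t:=\sup\mc{P}_\l(\mathscr{C}^+)$, which is finite by Theorem~\ref{th2.4} and satisfies $\l_t>\pi^2$ because the hypothesis $D>0$ and Theorem~\ref{th2.3} make the local branch of positive solutions bifurcate supercritically from $(\pi^2,0)$. Extracting a convergent subsequence from a sequence $(\l_n,u_n)$ of positive solutions with $\l_n\ua\l_t$, via the uniform a priori bounds of Theorem~\ref{th2.4} and the compactness of $\mc{K}$, yields a positive solution at $\l_t$, so $\mc{P}_\l(\mathscr{C}^+)=(-\infty,\l_t]$. For every $\l\in(\pi^2,\l_t]$, $u\equiv 0$ is a strict subsolution (because $\s_1=\pi^2<\l$) and any available positive solution is a supersolution, so the classical monotone iteration furnishes the minimal positive solution $\t^{\mathrm{min}}_\l$.

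Second, for parts (a)--(c), I would analyze the principal eigenvalue $\tau_1(\l,u)$ of the linearization \eqref{1.10}. A strong maximum principle argument shows $\tau_1(\l,\t^{\mathrm{min}}_\l)\geq 0$, for otherwise the positive principal eigenfunction would allow one to decrease $\t^{\mathrm{min}}_\l$ strictly, contradicting minimality; the inequality is strict on $(\pi^2,\l_t)$. A Picone-type comparison then forces $\tau_1(\l,u)<0$ for any positive solution $u>\t^{\mathrm{min}}_\l$, which simultaneously gives the uniqueness of the stable solution and, together with Theorem~\ref{th2.4}, the existence of an unstable companion, thus proving (b) and (c). For $\l\leq\pi^2$, pairing $-u''=\l u + a(x)u^2$ with $\psi_1(x)=\sin(\pi x)$ yields $\int_0^1 a(x)u^2\psi_1\,dx=(\pi^2-\l)\int_0^1 u\psi_1\,dx\geq 0$, and combining this with the identity obtained by pairing \eqref{1.10} with a positive principal eigenfunction forces $\tau_1(\l,u)<0$, proving~(a).

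Third, at $\l=\l_t$ one must have $\tau_1(\l_t,\t^{\mathrm{min}}_{\l_t})=0$: otherwise the analytic implicit function theorem applied to $\mf{F}$ would continue the minimal branch past $\l_t$, against the definition of $\l_t$. Hence $N[\mf{F}_u(\l_t,\t^{\mathrm{min}}_{\l_t})]=\mathrm{span}[\v]$ with $\v>0$ the associated principal eigenfunction, and the transversality condition required by the Crandall--Rabinowitz fold theorem \cite{CR2} reduces to $\int_0^1 \t^{\mathrm{min}}_{\l_t}\v\,dx\neq 0$, which is automatic. This produces an analytic local curve $s\mapsto(\l(s),u(s))$ of positive solutions through $(\l_t,\t^{\mathrm{min}}_{\l_t})$ with $\l'(0)=0$. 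The main obstacle is showing $\l''(0)<0$: projecting the second-order expansion of $\mf{F}(\l(s),u(s))=0$ onto $\v$ expresses $\l''(0)$ in terms of an integral involving $a(x)\v^3$ plus a correction from the Lyapunov--Schmidt reduction, and the nondegeneracy argument of \cite[Th.~9.9]{LG16}, in which the global sign of $D$ controls the orientation, delivers the subcritical sign. Uniqueness of the positive solution at $\l_t$ then follows because any other positive solution there would have $\tau_1<0$ (by Picone) hence be non-degenerate, and would persist for $\l$ slightly larger than $\l_t$, contradicting maximality. Finally, (e) is a direct application of the analytic implicit function theorem on $(\pi^2,\l_t)$, where $\mf{F}_u(\l,\t^{\mathrm{min}}_\l)$ is invertible, with the limit $\lim_{\l\ua\l_t}\t^{\mathrm{min}}_\l=\t^{\mathrm{min}}_{\l_t}$ supplied by the lower half of the fold obtained in step three.
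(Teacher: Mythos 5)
The paper does not actually prove Theorem \ref{th2.5}: it is quoted from the theory of G\'{o}mez-Re\~{n}asco and L\'{o}pez-G\'{o}mez as refined in \cite[Sec.~9.2--9.4]{LG16}, so there is no in-paper argument to compare yours against. Your sketch reconstructs the program of those references (maximal existence value attained via the a priori bounds and compactness, minimal solution by monotone iteration, stability governed by the principal eigenvalue $\tau_1$ of \eqref{1.10}, uniqueness of the stable solution by a Picone-type comparison, and a Crandall--Rabinowitz quadratic fold at $\lambda_t$), and this architecture is the correct one.

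Two steps, however, do not close as written. For part (a), the two identities you propose to combine live on different eigenfunctions: pairing the equation with $\psi_1$ gives $\int_0^1 au^2\psi_1\,dx=(\pi^2-\lambda)\int_0^1 u\psi_1\,dx\geq 0$, whereas pairing \eqref{1.10} with the principal eigenfunction $\varphi_1>0$ against $u$ gives $\tau_1=-\int_0^1 au^2\varphi_1\,dx\big/\int_0^1 u\varphi_1\,dx$; since $\varphi_1\neq\psi_1$, the first inequality says nothing about the sign of the numerator in the second. The standard repair is variational: testing the Rayleigh quotient of the linearization with $u$ itself yields $\tau_1\leq -\int_0^1 au^3\,dx\big/\int_0^1u^2\,dx$, and $\int_0^1 au^3\,dx=\int_0^1(u')^2\,dx-\lambda\int_0^1u^2\,dx\geq(\pi^2-\lambda)\int_0^1u^2\,dx\geq 0$ by the Poincar\'{e} inequality, with strict inequality because equality would force $u$ to be a multiple of $\psi_1$ with $\lambda=\pi^2$, hence $au^2\equiv 0$, which is impossible. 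Second, the strict positivity of $\tau_1(\lambda,\theta^{\mathrm{min}}_\lambda)$ on the open interval $(\pi^2,\lambda_t)$ --- which you assert in one clause and on which parts (b), (d) and (e) all rest --- is genuinely nontrivial: minimality only yields $\tau_1\geq 0$, and ruling out interior degeneracies along the minimal branch is precisely where the analytic structure of the solution set in \cite[Sec.~9.3]{LG16} enters; it cannot be dismissed in passing. A smaller imprecision: $u\equiv 0$ is a solution, not a strict subsolution; the subsolution initiating the monotone iteration is $\epsilon\psi_1$ for small $\epsilon>0$, which works exactly because $\lambda>\pi^2$.
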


The numerical experiments carried out in this paper confirm and illuminate
these  findings complementing them. Note that the \emph{exchange stability principle} of Crandall and Rabinowitz \cite{CR2} only provides us with the linearized stability of the minimal positive solution for $\l>\pi^2$ sufficiently close to $\pi^2$, while the existence and the uniqueness of the stable positive solution established by Theorem \ref{th2.5} inherits a global character. Very recently, it
has been established by Fern\'{a}ndez-Rinc\'{o}n and L\'{o}pez-G\'{o}mez \cite{FRLG} that choosing a a nonlinearity of the type $u^p$ for some  $p\geq 2$ in  \eqref{1.1} is imperative for the validity of Theorem \ref{th2.5}, regardless the nature of the boundary conditions that might be of general type. This explains why in this paper we are focusing attention on the particular example \eqref{1.1}.

\section{Behavior of the solutions of \eqref{1.1} and \eqref{1.2} as $\l\da -\infty$}

\noindent The next result provides us with the point-wise behavior of the positive solutions of
\eqref{1.1} in the open set $\O_-$.

\begin{theorem}
\label{th3.1}
For every  $\l < \pi^2$, let $u_\l$ be a positive solution of \eqref{1.1}. Then,
\begin{equation}
\label{3.1}
  \lim_{\l\da -\infty}u_\l (x)=0 \quad \hbox{for all}\;\; x\in \O_-
\end{equation}
uniformly on compact subintervals of $\O_-$.
\end{theorem}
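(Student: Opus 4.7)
The plan is to combine a $\l$-independent $L^\infty$ bound on compact subsets of each $I_j^-$ (coming from a large-solution super-solution) with a maximum-principle barrier that produces exponential decay at rate $\sqrt{|\l|}$. The starting structural observation is that on each $I_j^-=(\a_j,\b_j)\subset \O_-$ and for every $\l\leq 0$, since $a(x)<0$ and $u_\l>0$, the equation in \eqref{1.1} gives
\[
  u_\l''=|\l|\, u_\l+|a(x)|\, u_\l^2\ \geq\ |\l|\, u_\l,
\]
so $u_\l$ is convex on $I_j^-$ and satisfies $-u_\l''+|\l|\, u_\l=a(x)u_\l^2\leq 0$, i.e., it is a subsolution of the linear equation $-v''+|\l|v=0$.

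First I would construct the minimal large solution $V_0$ of
\[
  V_0''=|a(x)|\, V_0^2\ \text{ in } I_j^-,\qquad V_0(\a_j^+)=V_0(\b_j^-)=+\infty,
\]
as the monotone limit, as $n\ua\infty$, of the Dirichlet solutions $V_n$ of the same equation with boundary value $n$. Keller--Osserman applies on every sub-interval where $|a|$ is bounded below, so $V_0$ is finite on compact subsets of $I_j^-$. Because
\[
  -V_0''-\l V_0-a(x)V_0^2=|\l|\, V_0\ \geq\ 0\ \text{ in } I_j^-\ \text{for every } \l\leq 0,
\]
$V_0$ is a super-solution of \eqref{1.1} on $I_j^-$; since $u_\l$ is finite on $\p I_j^-$ while $V_0$ blows up there, the standard boundary blow-up comparison (applied to $u_\l$ and each $V_n$ via the maximum principle for $-\p_x^2+c$ with $c=|\l|+|a|(V_n+u_\l)>0$, then letting $n\ua\infty$) yields the $\l$-independent bound $u_\l\leq V_0$ on $I_j^-$.

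Next, fix a compact $K\subset I_j^-$, choose $c,d$ with $\a_j<c<d<\b_j$ and $\mathrm{dist}(K,\{c,d\})\geq \delta>0$, and let $\phi_\l$ solve $-\phi_\l''+|\l|\phi_\l=0$ on $[c,d]$ with $\phi_\l(c)=u_\l(c)$, $\phi_\l(d)=u_\l(d)$. Since $-\p_x^2+|\l|$ with Dirichlet data has positive first eigenvalue on $[c,d]$, the weak maximum principle applied to $u_\l-\phi_\l$ gives $u_\l\leq \phi_\l$ on $[c,d]$. Using the explicit $\sinh$-representation
\[
  \phi_\l(x)=\frac{u_\l(c)\sinh(\sqrt{|\l|}(d-x))+u_\l(d)\sinh(\sqrt{|\l|}(x-c))}{\sinh(\sqrt{|\l|}(d-c))},
\]
the elementary estimate $\sinh(\sqrt{|\l|}(d-c-\delta))/\sinh(\sqrt{|\l|}(d-c))\leq 2e^{-\sqrt{|\l|}\,\delta}$ valid for $|\l|$ large, and the $\l$-independent bound $u_\l(c)+u_\l(d)\leq V_0(c)+V_0(d)=:M_K$ from the previous step, I conclude
\[
  0\leq u_\l(x)\leq 2M_K\, e^{-\sqrt{|\l|}\,\delta}\qquad\text{for every } x\in K,
\]
which tends to $0$ as $\l\da -\infty$. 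Since $\O_-$ is the finite union of the $I_j^-$, this establishes \eqref{3.1} uniformly on compacta.

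The main obstacle is the construction of the universal super-solution $V_0$: because $|a|$ vanishes at the endpoints $\a_j,\b_j$, the absorption is degenerate and the usual Keller--Osserman barriers break down near $\p I_j^-$. I would handle this by first solving on $[\a_j+\e,\b_j-\e]$ (where $|a|\geq c_\e>0$) to obtain a large solution $V_0^\e$ via the standard theory, then monotonically letting $\e\da 0$ while controlling the limit by explicit local barriers of the form $C/\mathrm{dist}(x,\p I_j^-)^3$ that match the blow-up rate induced by the simple zero of $a$ at $\a_j$ and $\b_j$.
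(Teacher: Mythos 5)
Your proposal is correct, and it reaches \eqref{3.1} by a genuinely different route in the key decay step. Both arguments share the first ingredient: an a priori bound for $u_\l$ on compact subsets of $\O_-$ obtained by comparison with a boundary blow-up solution (your $V_0$, the minimal large solution of the pure absorption equation $V''=|a|V^2$, plays the role of the paper's $\ell^{\mathrm{min}}_{\l}$ and of the constant-coefficient large solution $L_\l$; your linearization of the comparison through the operator $-\p_x^2+c$ with $c=|\l|+|a|(V_n+u_\l)\geq 0$ is the standard and correct way to justify it, since $a<0$ makes the nonlinearity decreasing in $u$ there). Where the two proofs diverge is in how the decay is extracted. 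The paper freezes $a$ at its negative maximum $\o$ on a small interval, passes to the symmetric constant-coefficient large solution $L_\l$, tests its equation against the principal Dirichlet eigenfunction $\v$ of a smaller concentric interval, discards the term $\o\int L_\l^2\v\leq 0$, and uses the monotonicity $L_\l\leq L_\mu$ for $\l<\mu$ to bound the boundary flux terms uniformly in $\l$; this yields $\bigl[(\tfrac{\pi}{2\e})^2-\l\bigr]\int L_\l\v\leq C$, hence $\int L_\l\v\to 0$ and, by the symmetry and minimality of $L_\l$ at the center, $L_\l(x_0)\to 0$. You instead discard the absorption term from the equation for $u_\l$ itself, observe that $u_\l$ is a subsolution of the linear equation $-v''+|\l|v=0$, and compare with the explicit $\sinh$ barrier; this is more elementary (no eigenfunction testing, no symmetry or monotonicity of large solutions in $\l$) and quantitatively sharper, giving the exponential rate $O(e^{-\sqrt{|\l|}\,\d})$ on compacta rather than the $O(1/|\l|)$-type integral decay of the paper. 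One remark: your concern about the degenerate absorption at $\p I_j^-$ is unnecessary for the statement, since compact subintervals of $\O_-$ stay at positive distance from the zeros of $a$; it suffices to build the large solution on a slightly larger compact subinterval where $|a|$ is bounded below (exactly as the paper does by working on $[x_0-4\e,x_0+4\e]$), so your $\e$-regularization and the $C/\mathrm{dist}(x,\p I_j^-)^3$ barriers are harmless but superfluous.
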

\begin{proof}
Pick an arbitrary $x_0\in \O_-$. As $\O_-$ is open, there exists $\e>0$ such that
$$
   0<x_0-4 \e <x_0+4\e<1 \quad\hbox{and}\quad [x_0-4\e,x_0+4\e]\subset \O_-.
$$
Since $a$ is continuous, we have that
$$
  \o := \max_{|x-x_0|\leq 4\e}a(x) <0.
$$
Let $\ell^\mathrm{min}_{\l}$ denote the minimal positive solution of the singular problem
\begin{equation}
\label{3.2}
		\left\{\begin{array}{l}
			-\ell'' = \lambda \ell + a(x)\ell^{2}\quad\text{in }\;\; (x_0-4\e,x_0+4\e),\\
			\ell(x_0-4\e) = \ell(x_0+4\e) = \infty,\end{array}\right.
\end{equation}	
whose existence is guaranteed by, e.g., \cite[Ch. 3]{LG16}, and set
$$
   B := \| \ell^\mathrm{min}_{\l}\|_{\mc{C}[x_0-2\e,x_0+2\e]}.
$$
Then, the restriction of the function $\ell^\mathrm{min}_{\l}$ to the interval
$[x_0-2\e,x_0+2\e]$ provides us with a positive subsolution of the regular problem
\begin{equation}
\label{3.3}
		\left\{\begin{array}{l}
			- u '' = \lambda u  + \o u^{2}\quad\text{in }\;\; (x_0-2\e,x_0+2\e),\\
			u(x_0-2\e) = u(x_0+2\e) = B. \end{array}\right.
\end{equation}	
As $\o<0$, any sufficiently large constant, $M>B$, provides us with a  supersolution of
\eqref{3.3} such that
$$
  \ell^\mathrm{min}_{\l} < M \quad \hbox{in}\;\; [x_0-2\e,x_0+2\e].
$$
Thus, thanks to, e.g.,  \cite[Th. 2.4]{LG16}, \eqref{3.3} possesses a unique positive solution,
$\t_{[\l,B]}$, such that
$$
   \ell^\mathrm{min}_{\l} \leq  \t_{[\l,B]} \leq  M \quad \hbox{in}\;\; [x_0-2\e,x_0+2\e].
$$
Moreover, according to the proof of \cite[Th 3.2]{LG16},  $\t_{[\l,B]}$ is symmetric about
$x_0$, and, for every $\l \in\R$, the point-wise  limit
$$
   L_{\l} := \lim_{\xi\ua\infty} \t_{[\l,\xi]}
$$
is increasing and, thanks to the uniqueness result of \cite{LG07},  it provides us with the unique positive solution of the singular problem
\begin{equation*}
		\left\{\begin{array}{l}
			- u '' = \lambda u  + \o u^{2}\quad\text{in }\;\; (x_0-2\e,x_0+2\e),\\
			u(x_0-2\e) = u(x_0+2\e) = \infty. \end{array}\right.
\end{equation*}	
Since $L_\l$ is symmetric about $x_0$, it is apparent that
\begin{equation}
\label{3.4}
  L_\l(x_0)=\min_{(x_0-2\e,x_0+2\e)} L_\l.
\end{equation}
On the other hand, by \cite[Th. 2.4]{LG16}, we already know that $\t_{[\l,\xi]}< \t_{[\mu,\xi]}$ if
$\l<\mu$. Thus, letting $\xi\ua\infty$ yields
$$
  L_\l \leq L_\mu \quad\hbox{in}\;\; [x_0-2\e,x_0+2\e].
$$
Subsequently, we consider the auxiliary function
$$
  \v(x):= \sin\frac{\pi (x-x_0+\e)}{2\e},\qquad x\in [x_0-\e,x_0+\e].
$$
It has been chosen to satisfy
\begin{equation}
\label{3.5}
		\left\{\begin{array}{l}
		-\varphi''  = \left( \tfrac{\pi}{2\e} \right)^2 \varphi  \quad\text{ in }\;\; (x_0-\e,x_0+\e),\\
		\varphi(x_0-\e)  = \varphi(x_0+\e) = 0. \end{array}\right.
\end{equation}	
Thus, multiplying the differential equation
$$
     -L_\l'' =\l L_\l +\o L_\l^2\quad \hbox{in}\;\; [x_0-\e,x_0+\e]
$$
by $\v$ and integrating in  $(x_0-\e,x_0+\e)$ yields
\begin{equation}
\label{3.6}
  -\int_{x_0-\e}^{x_0+\e} L_\l''\v\,dx = \l \int_{x_0-\e}^{x_0+\e} L_\l \v \,dx
   +\o \int_{x_0-\e}^{x_0+\e} L_\l^2\v \,dx.
\end{equation}
On the other hand, integrating by parts, we find that
\begin{align*}
  \int_{x_0-\e}^{x_0+\e} L_\l''\v\,dx & =
  \int_{x_0-\e}^{x_0+\e} \left( L_\l'\v\right)'\,dx -
  \int_{x_0-\e}^{x_0+\e} L_\l'\v'\,dx=
  -\int_{x_0-\e}^{x_0+\e} L_\l'\v'\,dx,\\
  \int_{x_0-\e}^{x_0+\e} L_\l\v''\,dx & =
  \int_{x_0-\e}^{x_0+\e} \left( L_\l\v'\right)'\,dx -
  \int_{x_0-\e}^{x_0+\e} L_\l'\v'\,dx.
\end{align*}
Consequently, by \eqref{3.5},
\begin{align*}
-\int_{x_0-\e}^{x_0+\e} L_\l''\v\,dx & = \int_{x_0-\e}^{x_0+\e}L_\l'\v'\,dx =
\int_{x_0-\e}^{x_0+\e} \left( L_\l\v'\right)'\,dx - \int_{x_0-\e}^{x_0+\e}L_\l\v''\,dx\\
& = L_\l(x_0+\e)\v'(x_0+\e)-L_\l(x_0-\e)\v'(x_0-\e) +  \left( \tfrac{\pi}{2\e} \right)^2
\int_{x_0-\e}^{x_0+\e} L_\l\v\,dx.
\end{align*}
Thus, since $\o<0$, substituting in \eqref{3.6} yields
\begin{align*}
 \Big[ \left( \tfrac{\pi}{2\e} \right)^2-\l\Big]  \int_{x_0-\e}^{x_0+\e} L_\l\v\,dx & =
 \o \int_{x_0-\e}^{x_0+\e} L_\l^2\v \,dx+L_\l(x_0-\e)\v'(x_0-\e) -L_\l(x_0+\e)\v'(x_0+\e)
 \\ & < L_\l(x_0-\e)\v'(x_0-\e) -L_\l(x_0+\e)\v'(x_0+\e).
\end{align*}
Therefore, since
$$
  L_\l(x_0-\e)\v'(x_0-\e) -L_\l(x_0+\e)\v'(x_0+\e)>0,
$$
we can infer from the previous estimate that
\begin{equation}
\label{3.7}
     \lim_{\l\da-\infty} \int_{x_0-\e}^{x_0+\e} L_\l\v\,dx =0.
\end{equation}
Consequently, owing to \eqref{3.4} and \eqref{3.7}, it becomes apparent that
\begin{equation}
\label{3.8}
     \lim_{\l\da-\infty} L_\l(x_0) =0.
\end{equation}
Note that, since
$$
   \ell^\mathrm{min}_{\l} \leq  \t_{[\l,B]} \leq L_\l \quad \hbox{in}\;\; (x_0-2\e,x_0+2\e),
$$
\eqref{3.8} implies that
\begin{equation}
\label{3.9}
     \lim_{\l\da-\infty} \ell^\mathrm{min}_{\l}(x_0) =0.
\end{equation}
Similarly, for every $x\in [x_0-\e,x_0+\e]$, we have that
$$
   [x-\e,x+\e]\subset [x_0-2\e,x_0+2\e]
$$
and  hence, the restriction of $\ell^\mathrm{min}_{\l}$ to the interval $[x-\e,x+\e]$ provides us with a subsolution of
\begin{equation}
\label{3.10}
		\left\{\begin{array}{l}
			- u '' = \lambda u  + \o u^{2}\quad\text{in }\;\; (x-\e,x+\e),\\
			u(x-\e) = u(x+\e) = B. \end{array}\right.
\end{equation}	
Consequently, reasoning as above, it becomes apparent that
\begin{equation}
\label{3.11}
  \ell^\mathrm{min}_{\l} \leq L_{\l,x}\quad \hbox{in}\;\; (x-\e,x+\e),
\end{equation}
where $L_{\l,x}$ stands for the unique positive solution of the singular problem
\begin{equation*}
		\left\{\begin{array}{l}
			-u'' = \lambda u + \o u^{2}\quad\text{in }\;\; (x-\e,x+\e),\\
			u(x-\e) = u(x+\e) = \infty.\end{array}\right.
\end{equation*}	
By the uniqueness and the radial symmetry of $L_{\l,x}$ about $x$, we find that
$$
  L_{\l,x}(y)= L_\l (x_0-x+y) \quad\hbox{for all}\;\; y\in (x-\e,x+\e).
$$
Thus, it follows from \eqref{3.11} that
$$
  \ell^\mathrm{min}_{\l} (x) \leq L_{\l,x}(x)=L_\l(x_0) \quad \hbox{for all}\;\; x\in(x_0-\e,x_0+\e).
$$
Therefore, due to \eqref{3.8}, we find that
$$
   \lim_{\l\da -\infty}  \ell^\mathrm{min}_{\l} =0 \quad \hbox{uniformly in} \;\; (x_0-\e,x_0+\e).
$$
A compactness argument ends the proof.
\end{proof}

Throughout the rest of this section, we will assume that $a(x)$ satisfies ($\mathrm{H}_a$). Then, for every $j\in\{1,...,r\}$, some of the following (excluding) options occurs. Either  (i) $0<\a_j<\b_j<1$, or (ii) $0=\a_j<\b_j<1$, or (iii) $0<\a_j<\b_j=1$. Subsequently,
we will denote by $\ell^\mathrm{min}_{\l,j}$ the minimal positive solution of the singular problem
\begin{equation}
\label{iii.12}
 \left\{ \begin{array}{l} -u''=\l u +a(x)u^2  \quad \hbox{in}\;\; (\a_j,\b_j),\cr
 u(\a_j)=\infty, \;\; u(\b_j)=\infty, \end{array}\right.
\end{equation}
if (i) holds, or the minimal positive solution of
\begin{equation}
\label{iii.13}
 \left\{ \begin{array}{l} -u''=\l u +a(x)u^2  \quad \hbox{in}\;\; (\a_j,\b_j),\cr
 u(0)=0, \;\; u(\b_j)=\infty, \end{array}\right.
\end{equation}
if (ii) holds, or the minimal positive solution of
\begin{equation}
\label{iii.14}
 \left\{ \begin{array}{l} -u''=\l u +a(x)u^2  \quad \hbox{in}\;\; (\a_j,\b_j),\cr
 u(\a_j)=\infty, \;\; u(1)=0, \end{array}\right.
\end{equation}
in case (iii). Their existence is guaranteed, e.g., by \cite[Ch. 3]{LG16}.
\par
The proof of Theorem \ref{th3.1} reveals that actually the next result holds.

\begin{corollary}
\label{co3.1}
Under assumption {\rm ($\mathrm{H}_a$)}, for every $j\in \{1,...,r\}$ and $x\in (\a_j,\b_j)$,
\begin{equation}
\label{iii.15}
  \lim_{\l\da -\infty}\ell^\mathrm{min}_{\l,j}(x)=0,
\end{equation}
uniformly in compact subsets of $(\a_j,\b_j)$.
\end{corollary}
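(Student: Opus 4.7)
The plan is to leverage the proof of Theorem \ref{th3.1}, which already does most of the work, and simply substitute $\ell^\mathrm{min}_{\l,j}$ for the unspecified positive solution $u_\l$ of \eqref{1.1}. That proof established, via the comparison with the minimal large solution $L_\l$ of a constant-coefficient problem $-u''=\l u+\o u^2$ on a subinterval of $\O_-$, that any non-negative subsolution of the $\o$-problem with finite boundary data vanishes pointwise at the center of the subinterval as $\l\da-\infty$. I only need to verify that all the comparisons used there still go through with $\ell^\mathrm{min}_{\l,j}$ in the role previously played by $u_\l$.

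Concretely, I fix $j\in\{1,\ldots,r\}$ and an arbitrary $x_0\in(\a_j,\b_j)$, and choose $\e>0$ so small that $[x_0-4\e,x_0+4\e]\subset(\a_j,\b_j)$. I then set $\o:=\max_{|x-x_0|\leq 4\e}a(x)<0$. Whichever of the three cases (i), (ii), (iii) listed right before \eqref{iii.12} applies to $(\a_j,\b_j)$, the function $\ell^\mathrm{min}_{\l,j}$ is a classical $\mc{C}^2$ solution of the equation on the interior of $(\a_j,\b_j)$, hence finite and bounded on $[x_0-2\e,x_0+2\e]$. Letting $B:=\|\ell^\mathrm{min}_{\l,j}\|_{\mc{C}[x_0-2\e,x_0+2\e]}$, the inequality $a(x)\leq\o<0$ together with $\ell^\mathrm{min}_{\l,j}\geq 0$ shows that the restriction of $\ell^\mathrm{min}_{\l,j}$ to $[x_0-2\e,x_0+2\e]$ is a positive subsolution of \eqref{3.3}.

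From this point onwards the argument of Theorem \ref{th3.1} applies verbatim. Sub-supersolution techniques give $\ell^\mathrm{min}_{\l,j}\leq\t_{[\l,B]}\leq L_\l$ on $(x_0-2\e,x_0+2\e)$, and the test-function computation against $\v(x)=\sin(\pi(x-x_0+\e)/(2\e))$, combined with the symmetry of $L_\l$ about $x_0$ (which forces $L_\l(x_0)=\min L_\l$), yields $L_\l(x_0)\to 0$. Hence $\ell^\mathrm{min}_{\l,j}(x_0)\to 0$. Repeating the same reasoning at any $x\in[x_0-\e,x_0+\e]$ in place of $x_0$ (the inclusion $[x-\e,x+\e]\subset[x_0-2\e,x_0+2\e]$ persists) together with the translation-invariance of the constant-coefficient majorant yields uniform convergence to $0$ on $(x_0-\e,x_0+\e)$, and a standard compactness argument then promotes this to uniform convergence on any compact subset of $(\a_j,\b_j)$.

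The only point that requires a moment's care is the $\l$-dependence of $B=B(\l)$, which might well grow unboundedly as $\l\da-\infty$. This is harmless, however, because $L_\l$ is the monotone pointwise limit of the family $\{\t_{[\l,\xi]}\}_{\xi>0}$ as $\xi\ua\infty$, so the inequality $\t_{[\l,B]}\leq L_\l$ holds irrespective of how $B$ depends on $\l$. Beyond this bookkeeping, I do not foresee any genuine obstacle: the three boundary-condition options (i), (ii), (iii) for $(\a_j,\b_j)$ all reduce to exactly the same interior estimate, since the whole argument is purely local around $x_0$ and never interacts with the endpoints of $(\a_j,\b_j)$.
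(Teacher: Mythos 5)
Your proposal is correct and follows essentially the same route as the paper, which justifies the corollary by observing that the proof of Theorem \ref{th3.1} applies verbatim once one notes that $\ell^{\mathrm{min}}_{\l,j}$, being a classical solution in the interior of $(\a_j,\b_j)$ with $a\leq\o<0$ there, is a finite subsolution of the constant-coefficient problem \eqref{3.3} and hence dominated by $L_\l$. Your remarks on the $\l$-dependence of $B$ and on the irrelevance of the endpoint conditions (i)--(iii) are accurate and fill in exactly the bookkeeping the paper leaves implicit.
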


Conversely, Theorem \ref{th3.1}  follows from Corollary \ref{co3.1} taking into account that, thanks to the maximum principle,
\begin{equation}
\label{iii.16}
  u_\l \leq \ell^\mathrm{min}_{\l,j} \qquad \hbox{in}\;\; (\a_j,\b_j)
\end{equation}
for all $j\in\{1,...,r\}$.
\par
The behavior of the positive solutions of \eqref{1.1} as $\l\da -\infty$ in $\O_-$ described by
Theorem \ref{th3.1} is mimicked by the positive solutions of its parabolic counterpart
\eqref{1.2}, as soon as the initial datum $u_0$ be a subsolution of \eqref{1.1}. To state this result, we need to introduce some of notation. We will denote by $u(x,t;u_0,\l)$ the unique solution of \eqref{1.2}, and by $T_\mathrm{max}=T_\mathrm{max}(u_0,\l)\in (0,+\infty]$ its maximal existence time. As for every $\l<\mu$ the solution $u(x,t;u_0,\mu)$ is a strict supersolution of \eqref{1.2}, owing to the parabolic maximum principle,
\begin{equation}
\label{iii.17}
  u(x,t;u_0,\l)<u(x,t;u_0,\mu)
\end{equation}
for all $x\in(0,1)$ and $t \in [0,T_\mathrm{max}(u_0,\mu))$. Thus,
\begin{equation}
\label{iii.18}
   T_\mathrm{max}(u_0,\mu)\leq T_\mathrm{max}(u_0,\l) \quad \hbox{for all}\;\;\l<\mu.
\end{equation}
Therefore, the limit
\begin{equation}
\label{iii.19}
  T_\mathrm{max}(u_0)\equiv \lim_{\l\da -\infty} T_\mathrm{max}(u_0,\l)\in (0,\infty]
\end{equation}
is well defined.

\begin{theorem}
\label{th3.2} Suppose that $u_0\gneq 0$ is a subsolution of \eqref{1.1}. Then, for every $x\in\O_-$ and
$t \in (0,T_\mathrm{max}(u_0))$,
\begin{equation}
\label{iii.20}
  \lim_{\l\da -\infty} u(x,t;u_0,\l)=0.
\end{equation}
Moreover, the limit is uniform on compact subsets of $\O_-$.
\end{theorem}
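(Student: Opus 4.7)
My plan is to construct, for each interval $I_j^-=(\a_j,\b_j)$ with $j\in\{1,\ldots,r\}$, an explicit parabolic supersolution of \eqref{1.2} on $I_j^-\times[0,\infty)$ that dominates $u(\cdot,\cdot;u_0,\l)$ and vanishes on compacta of $I_j^-$ as $\l\da -\infty$. The supersolution will be the sum of two pieces: a spatially constant, time-dependent piece $Me^{\l t}$, whose role is to dominate the initial datum, and the minimal singular steady state $\ell_{\l,j}^{\min}$ from Corollary~\ref{co3.1}, whose role is to absorb the (possibly large) values of $u$ at the endpoints of $I_j^-$. Each piece decays to $0$ in the limit $\l\da -\infty$ for $x$ in a compact subset and $t>0$ fixed.

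Concretely, I would first set $M:=\|u_0\|_\infty+1$ and observe that $M$ is a supersolution of \eqref{1.1}, hence of \eqref{1.2}, as soon as $|\l|\geq M\,\|a^+\|_\infty$; the parabolic maximum principle then gives $u(\cdot,t;u_0,\l)\leq M$ on $[0,1]$ for every $t$ in its interval of existence, so that $T_\mathrm{max}(u_0,\l)=+\infty$ for such $\l$. Fixing $j\in\{1,\ldots,r\}$ and setting
$$
   \tilde w_\l(x,t):=Me^{\l t}+\ell_{\l,j}^{\min}(x),\qquad (x,t)\in I_j^-\times[0,\infty),
$$
the equation $-(\ell_{\l,j}^{\min})''=\l\,\ell_{\l,j}^{\min}+a(x)\,(\ell_{\l,j}^{\min})^2$ from \eqref{iii.12}--\eqref{iii.14} yields, after a direct expansion,
$$
  \p_t\tilde w_\l-\p_{xx}\tilde w_\l-\l\tilde w_\l-a(x)\tilde w_\l^2
  =-a(x)\bigl(M^2e^{2\l t}+2Me^{\l t}\,\ell_{\l,j}^{\min}(x)\bigr)\geq 0,
$$
since $a(x)\leq 0$ on $I_j^-$. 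Thus $\tilde w_\l$ is a classical supersolution of the parabolic equation on $I_j^-\times(0,\infty)$. At the initial time one has $\tilde w_\l(x,0)\geq M\geq u_0(x)$, and on $\p I_j^-$ either $\tilde w_\l=+\infty$ (at interior endpoints of $I_j^-$, where $\ell_{\l,j}^{\min}$ blows up) or $\tilde w_\l=Me^{\l t}\geq 0=u(\cdot,t)$ (at an endpoint coinciding with $0$ or $1$, where the Dirichlet condition makes $\ell_{\l,j}^{\min}$ vanish).

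A parabolic comparison argument then gives $u(x,t;u_0,\l)\leq\tilde w_\l(x,t)$ on $I_j^-\times[0,T]$ for every $T<T_\mathrm{max}(u_0)$. Letting $\l\da -\infty$ at a fixed $t>0$, the term $Me^{\l t}$ tends to $0$ trivially and $\ell_{\l,j}^{\min}(x)\to 0$ uniformly on compact subsets of $I_j^-$ by Corollary~\ref{co3.1}; together with the finite covering $\O_-=\bigcup_{j=1}^{r}I_j^-$, this yields the claimed uniform convergence on compacta of $\O_-$. The only technical point that I expect to require care is justifying the comparison despite the singular values of $\ell_{\l,j}^{\min}$ at the interior endpoints of $I_j^-$: my plan is to approximate $\ell_{\l,j}^{\min}$ by the regular positive solutions $\ell_{\l,j}^{(n)}$ of the same elliptic problem with finite boundary data $n\in\N$, apply the classical parabolic maximum principle to $\tilde w_\l^{(n)}:=Me^{\l t}+\ell_{\l,j}^{(n)}$ (using that the global bound $u\leq M$ makes $\tilde w_\l^{(n)}\geq u$ on the lateral parabolic boundary as soon as $n\geq M$), and then pass to the monotone limit $n\to\infty$ to recover the bound $u\leq\tilde w_\l$.
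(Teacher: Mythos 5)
Your proposal is correct, but it reaches \eqref{iii.20} by a genuinely different route than the paper. The paper's proof exploits the hypothesis that $u_0$ is a subsolution through Sattinger's theory: $t\mapsto u(\cdot,t;u_0,\l)$ is then non-decreasing, so each time-slice $u(\cdot,t;u_0,\l)$ is itself an elliptic subsolution of \eqref{1.1}; restricting to $I_j^-$ it becomes a subsolution of the singular problems \eqref{iii.12}--\eqref{iii.14}, and the \emph{elliptic} maximum principle gives $u(\cdot,t;u_0,\l)\leq \ell^{\mathrm{min}}_{\l,j}$ directly, after which Corollary \ref{co3.1} concludes. You instead build the explicit \emph{parabolic} supersolution $Me^{\l t}+\ell^{\mathrm{min}}_{\l,j}$ (your computation of the residual $-a(M^2e^{2\l t}+2Me^{\l t}\ell^{\mathrm{min}}_{\l,j})\geq 0$ is correct, as is the preliminary bound $u\leq M$ via the constant supersolution for $\l\leq -M\|a^+\|_\infty$), and your regularization by the solutions with finite boundary data $n$ is legitimate because $\ell^{\mathrm{min}}_{\l,j}$ is precisely the monotone limit of these in the construction of \cite[Ch.~3]{LG16}. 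What your approach buys is notable: it never uses that $u_0$ is a subsolution, so it proves the conclusion for an arbitrary bounded $u_0\gneq 0$, and the a priori bound $u\leq M$ shows in addition that $T_{\mathrm{max}}(u_0,\l)=\infty$ for all sufficiently negative $\l$, hence $T_{\mathrm{max}}(u_0)=\infty$. What the paper's route buys is brevity: Sattinger's monotonicity is needed elsewhere anyway (in Theorems \ref{th1.1} and \ref{th3.3}), and once one knows each time-slice is an elliptic subsolution, no parabolic comparison on a singular domain, and hence no approximation step, is required. Both arguments ultimately rest on the same two pillars, domination by $\ell^{\mathrm{min}}_{\l,j}$ on $I_j^-$ and the decay statement of Corollary \ref{co3.1}, together with the observation that a compact subset of $\O_-$ meets each $I_j^-$ in a compact subset.
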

\begin{proof}
Pick $t\in  (0,T_\mathrm{max}(u_0))$. By \eqref{iii.18} and \eqref{iii.19}, there exists $\mu<0$ such that $t\in (0,T_\mathrm{max}(u_0,\mu))$ for all $\l<\mu$. Moreover, since
$u_0$ is a subsolution of \eqref{1.1}, by Sattinger \cite{Sa}, $u(x,t;u_0,\l)$ is a subsolution of \eqref{1.1} for all $t\in [0,T_\mathrm{max}(u_0,\l))$; equivalently, $u(x,t;u_0,\l)$ is non-decreasing for all $t\in [0,T_\mathrm{max}(u_0,\l))$. In particular, for every $j\in\{1,...,r\}$, the restriction of $u(\cdot,t;u_0,\l)$ to the interval $I^-_j=(\a_j,\b_j)$ provides us with a positive
subsolution of the singular boundary value problem \eqref{iii.12} if (i) holds, \eqref{iii.13} if (ii) holds, or \eqref{iii.14} if (iii) holds. Thus, by the maximum principle, we find that
\begin{equation}
\label{iii.21}
  u(x,t;u_0,\l)\leq \ell^\mathrm{min}_{\l,j} \quad \hbox{in}\;\; [\a_j,\b_j].
\end{equation}
Finally,  \eqref{iii.20} follows easily from \eqref{iii.21} and Corollary \ref{co3.1}.
\end{proof}

Since \eqref{iii.20} holds for every $t\in (0,T_\mathrm{max}(u_0))$, letting $t\da 0$ in \eqref{iii.20}, after inter-exchanging the two limits,  it seems that a necessary condition so that $u_0\gneq 0$ can be a subsolution of \eqref{1.1} as $\l\da -\infty$ should be
$$
  u_0\equiv 0 \quad \hbox{in}\;\; \O_-=\cup_{j=1}^r I_j^-,
$$
which explains why all subsolutions of \eqref{1.1} that we will consider later
satisfy this requirement.
\par
Finally, we are ready to deliver the proof of Theorem \ref{th1.1}. We will actually prove that
one can take $T(u_0)=T_\mathrm{max}(u_0)$.
\vspace{0.2cm}

\noindent \emph{Proof of Theorem \ref{th1.1}.} First, suppose that $I^+_i$ is an interior interval. Note that $I^+_i= (\g_i,\r_i)=(\b_j,\a_{j+1})$. Choose $\e$ sufficiently small so that $\a_j+\e<\b_{j+1}-\e$. Then, by Corollary \ref{co3.1},
\begin{equation}
\label{iii.23}
  \lim_{\l\da -\infty}\ell^\mathrm{min}_{\l,j}(\a_j+\e)=
  \lim_{\l\da -\infty}\ell^\mathrm{min}_{\l,j+1}(\b_{j+1}-\e)=0.
\end{equation}
Moreover, by \eqref{iii.16}, for every $t\in (0,T_\mathrm{max}(u_0))$, there is $\mu<0$
such that $t\in (0,T_\mathrm{max}(u_0,\mu))$ and, for each $\l<\mu$,
\begin{equation}
\label{iii.24}
  u(x,t;u_0,\l)\leq \ell^\mathrm{min}_{\l,h}(x) \quad \hbox{for all}\;\;
  x\in (\a_h,\b_h), \;\; h\in\{j,j+1\}.
\end{equation}
Thus, by \eqref{iii.23} and \eqref{iii.24},
\begin{equation}
\label{iii.25}
  \lim_{\l \da -\infty}u(\a_j+\e,t;u_0,\l)= \lim_{\l \da -\infty}u(\b_{j+1}-\e,t;u_0,\l)=0.
\end{equation}
On the other hand, note that, as soon as the condition
\begin{equation}
\label{iii.26}
  \l +au(\cdot,t;u_0,\l)\leq 0 \quad \hbox{in}\;\; [\a_j+\e,\b_{j+1}-\e]
\end{equation}
holds, we have that
$$
  \frac{\p u}{\p t}=\frac{\p^2 u}{\p x^2}+\l u+ au^2 \leq \frac{\p^2 u}{\p x^2}
  \quad \hbox{in}\;\; [\a_j+\e,\b_{j+1}-\e].
$$
Since $u_0=0$ in $[\a_j,\b_{j+1}]$, by continuity, there exists $T(\l)>0$ such that \eqref{iii.26} holds for all $t\in [0,T(\l)]$. Actually, by \eqref{iii.17}, \eqref{iii.26} holds
for every $t\in [0,T(\mu)]$ if $\l<\mu$. Consequently, setting $T\equiv T(\mu)$ and
$$
  Q_T\equiv (\a_j+\e,\b_{j+1}-\e)\times [0,T],
$$
the parabolic maximum principle implies that
$$
  \max_{\bar Q_T}u = \max_{\p_L Q_T}u,
$$
where $\p_L Q_T$ stands for the \emph{lateral boundary} of the parabolic cylinder $Q_T$,
$$
  \p_L Q_T \equiv (\{\a_j+\e,\b_{j+1}-\e\}\times [0,T])\cup ([\a_j+\e,\b_{j+1}-\e]\times \{0\}).
$$
Therefore, since $u_0=0$ in $[\a_j,\b_{j+1}]$,
\begin{equation}
\label{iii.27}
\begin{split}
  \max_{\bar Q_T} u & =\max_{t\in [0,T]}\max \{u(\a_j+\e,t;u_0,\l),u(\b_{j+1}-\e,t;u_0,\l)\}
  \\ & = \max\{u(\a_j+\e,T;u_0,\l),u(\b_{j+1}-\e,T;u_0,\l)\}
\end{split}
\end{equation}
because, since it is a subsolution of \eqref{1.1},  $u(x,t;u_0,\l)$ is increasing in time.
\par
Subsequently, we choose $\eta>0$ arbitrary. Then, there exists $\mu=\mu(\eta)<0$ such that, for every $\l<\mu$,
\begin{equation*}
  \l + \|a\|_\infty \eta < \frac{\l}{2}<0.
\end{equation*}
Thanks to \eqref{iii.25}, shortening $\mu$ if necessary, we also have that, for every $\l<\mu$,
$$
  \max \{u(\a_j+\e,T;u_0,\l),u(\b_{j+1}-\e,T;u_0,\l)\}<\eta.
$$
Thus, \eqref{iii.27} implies that
$$
   \max_{\bar Q_T} u <\eta,
$$
and hence,
$$
  \l+a(x)u(x,T;u_0,\l)<\l+\|a\|_\infty \eta < \frac{\l}{2}<0 \quad \hbox{in}\;\; [\a_j+\e,\b_{j+1}-\e].
$$
Therefore, $u$ must remain bellow the level $\eta $ in  $[\a_j+\e,\b_{j+1}-\e]$
for all time where it is defined. As $\eta>0$ is arbitrary, the proof is completed in this case.
\par
This proof can be easily adapted to cover the case when $0\in \bar I_1^+$, or $1\in \bar I_s^+$. Indeed, suppose, for instance, that $I_1^+=(0,\r_1)$. Then, $I_1^-=(\r_1,\b_1)$ and, whenever $u_0=0$ in $[0,\b_1]$, the parabolic maximum principle can be applied in the interval $[0,\b_1-\e]$, instead of in $[\a_j+\e,\b_{j+1}-\e]$, to get the same result. This ends the proof. \hfill $\Box$
\vspace{0.2cm}

In the rest of this section, we assume that $a(x)$ satisfies (H$_a$) with $s=n+1$ and $r=n$, like
the special choice \eqref{1.3} with $n\in\N$, $n\geq 1$. Suppose
$$
  I_i^+=(\g_i,\r_i),\qquad i\in \{1,...,n+1\},
$$
and, for every $i\in \{1,...,n+1\}$, let $\t_{\{\l,i\}}$ be a positive solution of
\begin{equation}
\label{iii.28}
		\left\{\begin{aligned}
			-u'' &= \lambda u + a^+(x)u^{2}\quad\text{in }(\g_i,\r_i),\\
			u(\g_i) &= u(\r_i) = 0.
		\end{aligned}\right.
\end{equation}
on the component $\mathscr{C}^+$ of this problem. The existence has been already discussed in Section 2 and follows from the a priori bounds of \cite{ALG}. The  uniqueness
might be an open problem even in the special case when
\begin{equation}
\label{iii.29}
   a^+(x)=\mu_i \sin \frac{\pi(x-\g_i)}{\r_i-\g_i},\qquad x\in [\g_i,\r_i],
\end{equation}
for some constant $\mu_i>0$. As, for every $\lambda<0$,  the change of variable $u\equiv -\lambda U$ transforms \eqref{iii.28} in
\begin{equation}
\label{iii.30}
		\left\{\begin{aligned}
			-\varepsilon U'' &= -U + a^+(x)U^{2} \quad\text{in }(\g_i,\r_i),\\
			U(\g_i) &= U(\r_i) = 0,
		\end{aligned}\right.
\end{equation}	
with $\varepsilon = -1/\lambda$, it turns out that the problem of the uniqueness of the positive solution of \eqref{iii.28} as $\lambda \da -\infty$ is equivalent to the problem of the uniqueness of the positive solution for the singular perturbation problem \eqref{iii.30} as $\varepsilon\da 0$. Although there is a huge amount of literature on multi-peak solutions for Schr\"{o}dinger type equations
like \eqref{iii.30} (see, e.g., Ambrosetti, Badiale and Cingolani \cite{ABC}, del Pino and Felmer \cite{DPFa,DPFb},  Dancer and Wei \cite{DW}, and Wei \cite{Wei}), the experts still seem to be focusing most of their efforts into the problem of the existence of multi-bump solutions, rather than on the problem of their uniqueness (see, e.g., the recent paper of Le, Wei and Xu \cite{LWX}).
\par
Our numerical experiments suggest that the problem \eqref{iii.28}, with the special choice \eqref{iii.29}, possesses a unique positive solution, $\theta_{\lambda,i}$,  in the component  $\mathscr{C}^+$ for every $\lambda<\pi^2$. Moreover, $\theta_{\lambda,i}$ is symmetric about the central point of $(\g_i,\r_i)$, $z_i:=(\g_i+\r_i)/2$, where $a^+(x)$ reaches its maximum value in $I_i^+$, and it has a single peak at $z_i$. Actually,
$$
\mathscr{C}^+=\Big\{(\l,\t_{\l,i})\;:\;\; \l < \left(\tfrac{\pi}{\r_i-\g_i}\right)^2\Big\}
$$
consists of symmetric solutions about $z_j$, because we could not find any secondary bifurcation point
along the curve $\mathscr{C}^+$. Figure \ref{Newbif_diag} shows the global bifurcation diagram of positive solutions of \eqref{iii.28} for the  choice \eqref{iii.29}, with $\mu_i=1$, after re-scaling the problem to the entire interval $[0,1]$. We  are plotting the parameter $\l$, in abscisas, versus the derivative of the solution at the origin, $u'(0)$, in ordinates. As for $\lambda <-600$, $\theta'_{\lambda,i}(0)$ is very small, in this range of values of $\l$ it is hard to differentiate $\mathscr{C}^+$ from the $\lambda$-axis. The component $\mathscr{C}^+$
bifurcates subcritically from $u=0$ at $\l=\pi^2$ and, according to
\cite{ALG}, satisfies $\mathcal{P}_\lambda(\mathscr{C}^+)=(-\infty,\pi^2)$. It consists of symmetric solutions about $0.5$ with a single peak at $0.5$.

\begin{figure}[h!]		
\centering
\includegraphics[scale = 0.6]{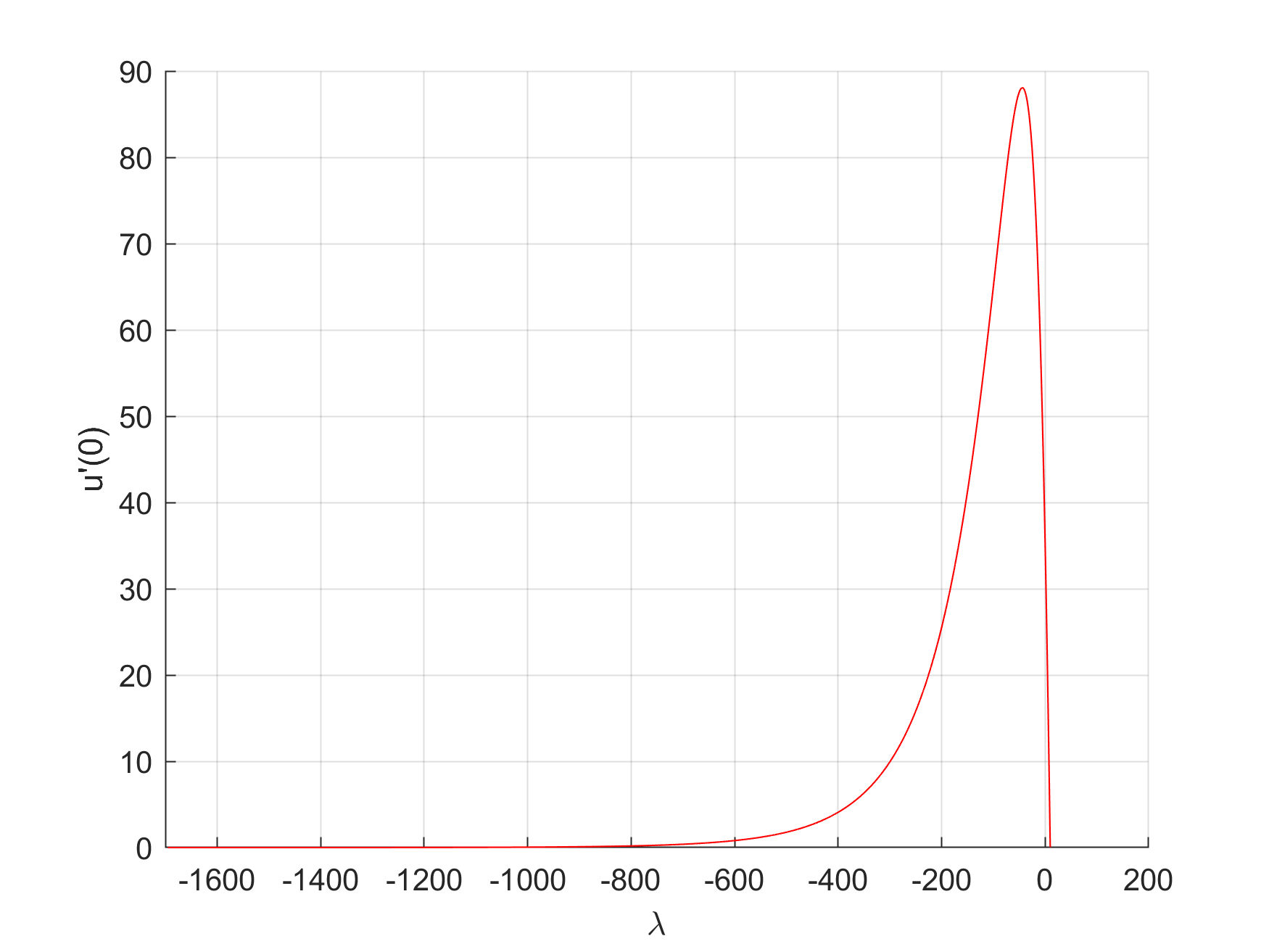}
\caption{Global bifurcation diagram of \eqref{iii.28}--\eqref{iii.29}.}\label{Newbif_diag}
\end{figure}

Figure  \ref{sinpeak} shows the plots of the solutions of $\mathscr{C}^+$ corresponding to
$\l=-100$, $\l=-683$ and $\l=-1695$, respectively. Not surprisingly, the smaller is the value of $\lambda$, the more concentrated is the mass of $\theta_{\lambda,i}$ at $0.5$. The three solutions plotted in Figure \ref{sinpeak} have been previously re-scaled to the interval $[0,1]$ from the original interval $[\g_i,\r_i]$ in such a way that $0.5$ corresponds to $z_i$. Actually,  according  to our numerical experiments, it becomes apparent that, for every $x \in [\g_i,\r_i]$,
$$
  \lim_{\l\da-\infty}\theta_{\l,i}(x)=\left\{\begin{array}{ll} +\infty & \quad \hbox{if}\;\; x=z_i,\\
  0 & \quad \hbox{if}\;\; x\neq z_i,\end{array}\right.
$$
which is a rather genuine behavior of this type of superlinear elliptic boundary value problems.

\begin{figure}[h!]		
\centering
\begin{subfigure}[t]{0.32\textwidth}
\centering
\includegraphics[scale = 0.32]{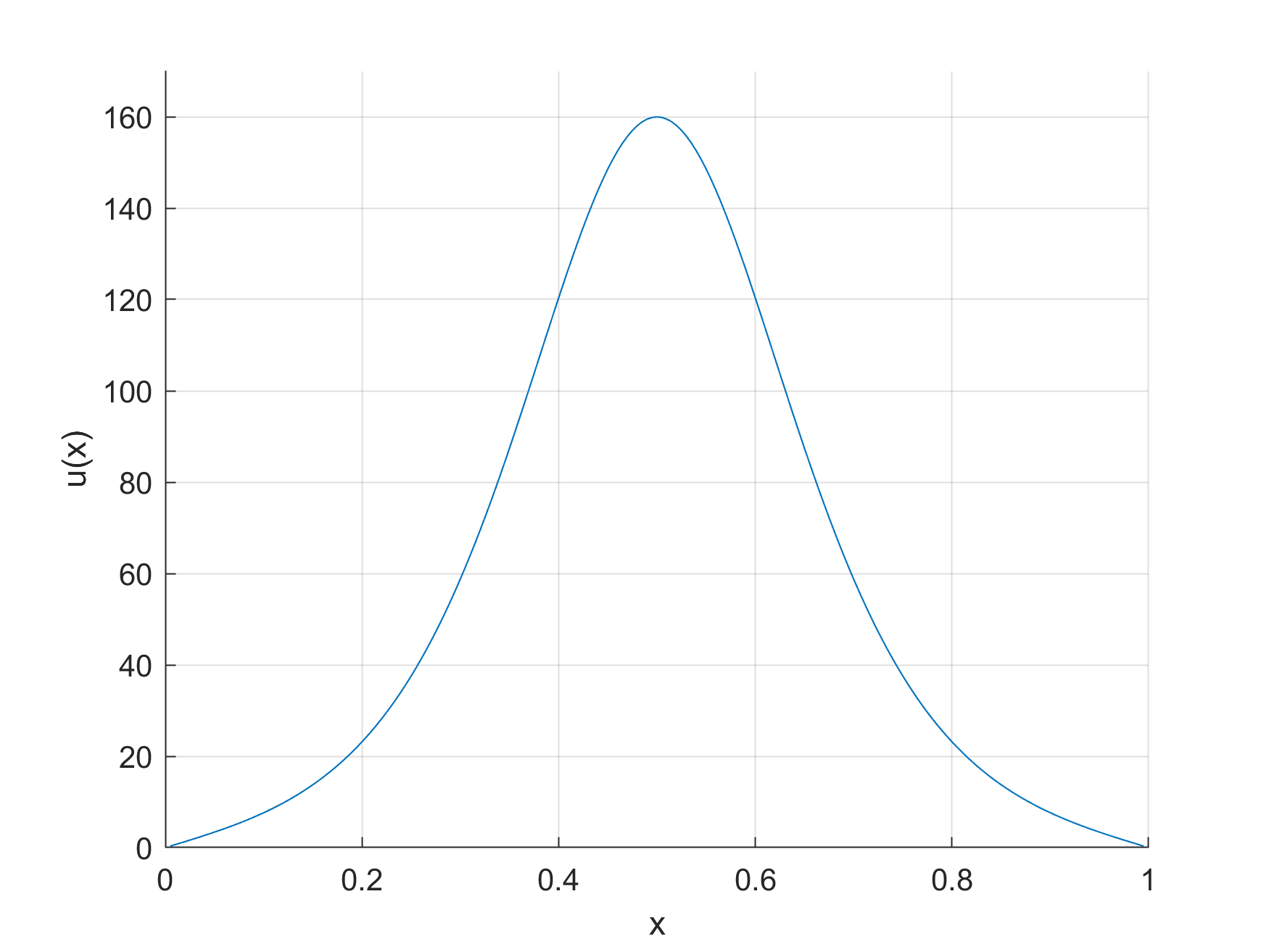}
\caption{$\lambda=-100$}
\end{subfigure}%
\begin{subfigure}[t]{0.32\textwidth}
\centering
\includegraphics[scale = 0.32]{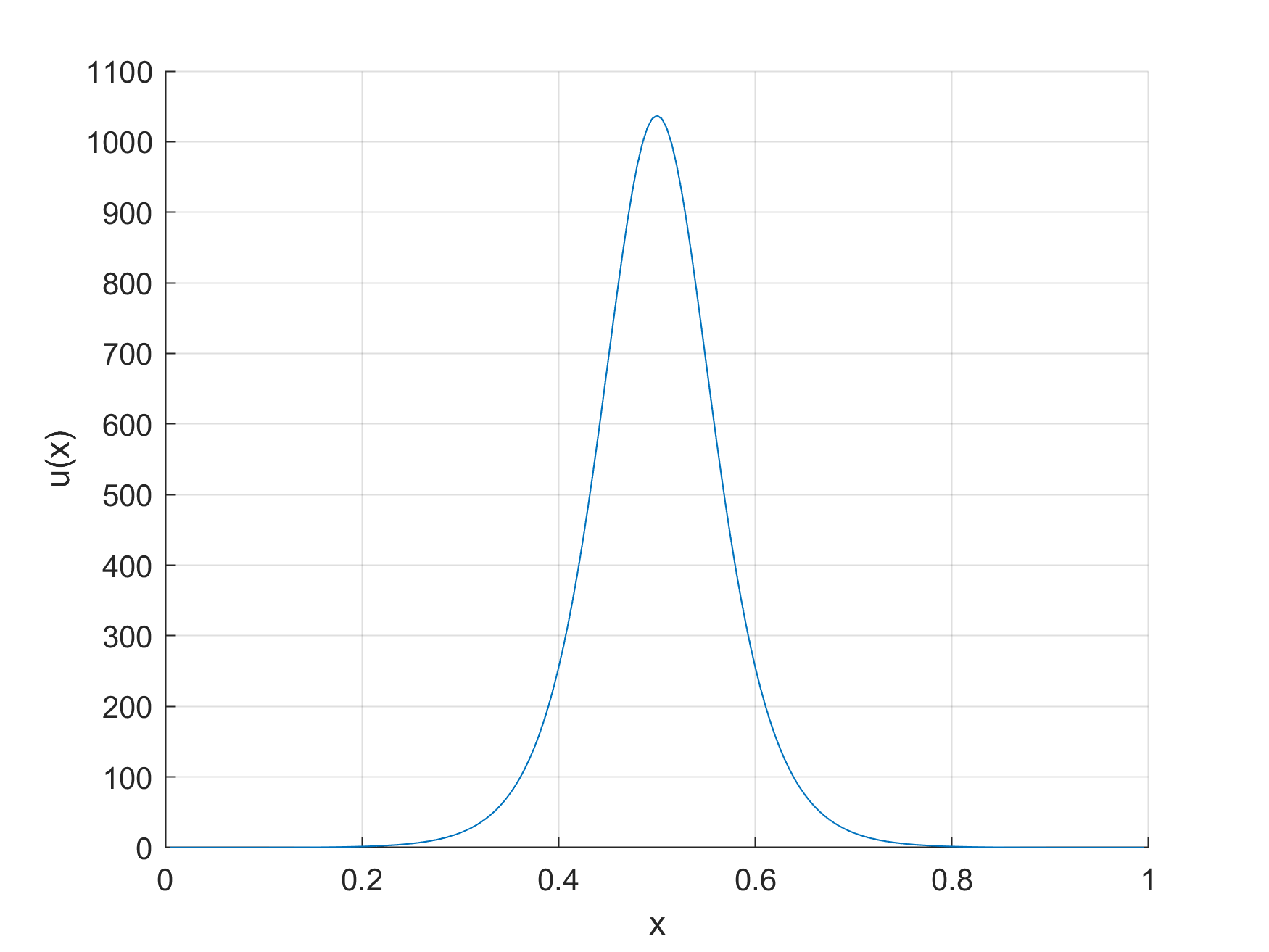}
\caption{$\lambda=-683$}
\end{subfigure}	
\begin{subfigure}[t]{0.32\textwidth}
\centering
\includegraphics[scale = 0.32]{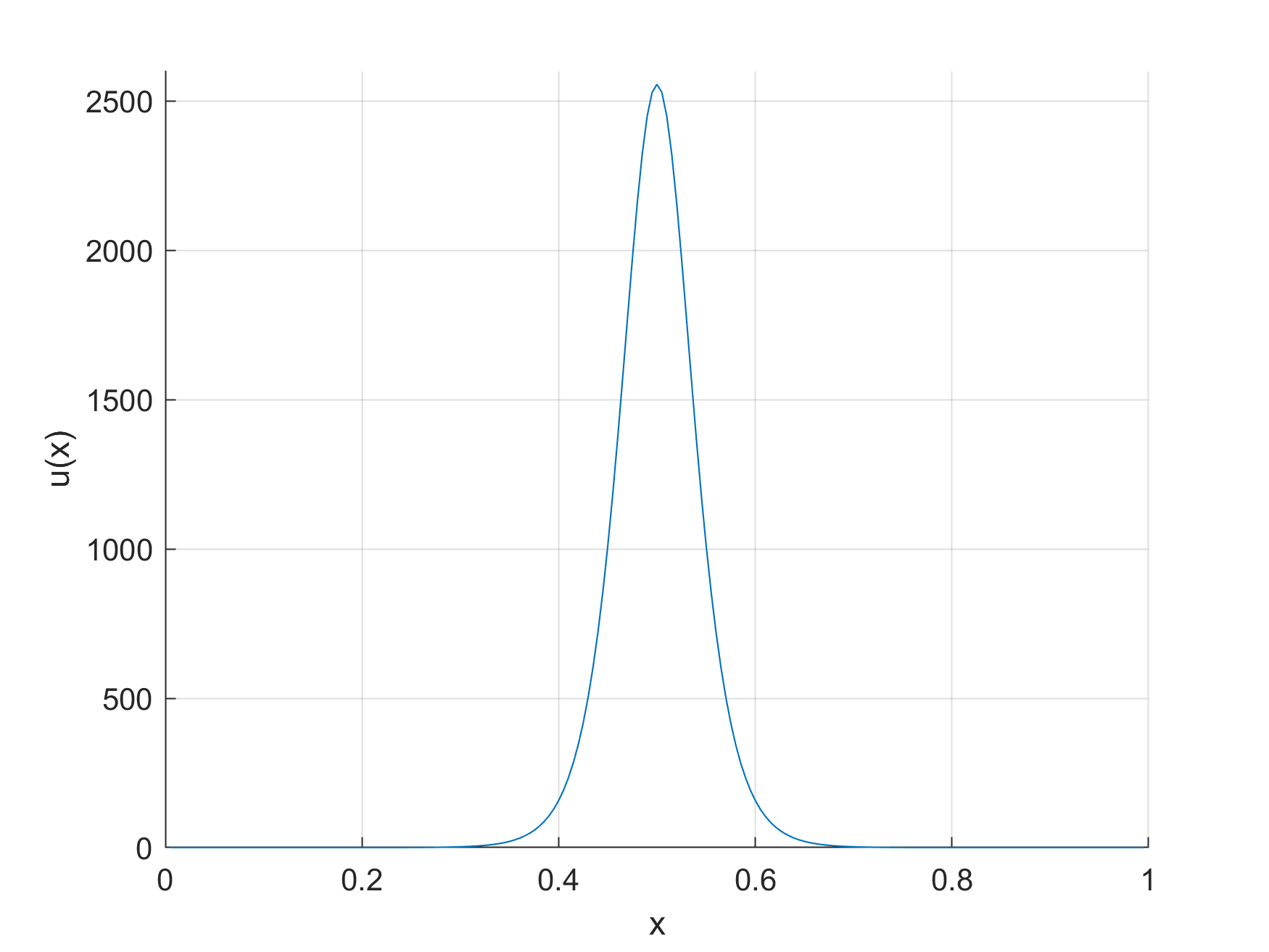}
\caption{$\lambda=-1695$}
\end{subfigure}	
\caption{Three positive solutions of the component $\mathscr{C}^+$.}
\label{sinpeak}
\end{figure}

Subsequently, we will consider the subsolution of \eqref{1.1} defined by
\begin{equation}
\label{iii.31}
   \underline{u}_{_{\underbrace{1...1}_{n+1}} }:= \left\{ \begin{array}{ll}
  \t_{\l,i}&\quad \hbox{in}\;\; [\g_i,\r_i], \;\; i\in \{1,...,n+1\},\\[2ex]
  0 &\quad \hbox{in}\;\; [0,1] \setminus \bigcup_{i=1}^{n+1} [\g_i,\r_i].\end{array} \right.
\end{equation}
The fact that it provides us with a weak subsolution of \eqref{1.1} is a direct consequence of a result of Berestycki and Lions \cite{BL}. Thus, making the choice $u_0:= \underline{u}_{1...1}$ and using the theory of Sattinger \cite{Sa}, $u(x,t;u_0,\l)$ must be a subsolution of \eqref{1.1} for all $t \in I_\mathrm{max}(u_0,\l)=[0,T_\mathrm{max}(u_0,\l))$. Equivalently, $u(x,t;u_0,\l)$ is non-decreasing for all $t\in I_\mathrm{max}(u_0,\l)$. In particular,
\begin{equation}
\label{iii.32}
  u_0 \leq u(\cdot,t;u_0,\l) \;\; \hbox{in}\;\; [0,1] \;\; \hbox{for all}\;\; t\in I_\mathrm{max}(u_0,\l).
\end{equation}
Then, the following result holds, though it remains an open problem to ascertain whether, or not,
the condition \eqref{iii.33} holds.

\begin{theorem}
\label{th3.3} Suppose {\rm ($\mathrm{H}_a$)} with $s=n+1$ and $r=n$, and $u_0\equiv \underline{u}_{1...1}$. Assume, in addition, that there exists $\mu>0$
such that, for every $\l<\mu$,  $T_\mathrm{max}(u_0,\l)=\infty$ and there is a constant $C(\l)>0$ such that
\begin{equation}
\label{iii.33}
  u(x,t;u_0,\l)\leq C(\l) \;\; \hbox{for all}\;\; (x,t)\in [0,1]\times [0,\infty).
\end{equation}
Then, there exists $\l_c<0$ such that \eqref{1.1} has, for every $\l<\l_c$, $2^{n+1}-1$ positive solutions.
\end{theorem}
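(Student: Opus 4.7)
The plan is to associate to each nonempty $J \subseteq \{1,\ldots,n+1\}$ a positive steady state $U_J^\lambda$ of \eqref{1.1}, obtained as the large-time limit of a monotone parabolic trajectory issuing from a subsolution supported on $\bigcup_{i \in J} I_i^+$, and then to use Theorem \ref{th1.1} to show that these $2^{n+1}-1$ steady states are pairwise distinct for $\lambda$ sufficiently negative.

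First, for any nonempty $J\subseteq\{1,\ldots,n+1\}$, define
\[
\underline{u}_J(x) := \begin{cases} \theta_{\lambda,i}(x), & x \in [\gamma_i,\rho_i],\ i \in J,\\ 0, & \text{otherwise}, \end{cases}
\]
which, exactly as for $\underline{u}_{1\ldots 1}$ above, is a weak subsolution of \eqref{1.1} by Berestycki--Lions \cite{BL}. Since $\underline{u}_J \leq \underline{u}_{1\ldots 1}$ pointwise, the parabolic comparison principle together with the standing hypothesis yields $T_\mathrm{max}(\underline{u}_J,\lambda)=\infty$ and $0\leq u(x,t;\underline{u}_J,\lambda) \leq u(x,t;\underline{u}_{1\ldots 1},\lambda) \leq C(\lambda)$ on $[0,1]\times[0,\infty)$ for every $\lambda<\mu$. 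Being a subsolution, $\underline{u}_J$ generates a non-decreasing trajectory by Sattinger \cite{Sa}, so the bounded monotone limit $U_J^\lambda:=\lim_{t\to\infty}u(\cdot,t;\underline{u}_J,\lambda)$ exists and, by parabolic regularity, solves \eqref{1.1}; moreover $U_J^\lambda \geq \underline{u}_J$, and the strong maximum principle gives $U_J^\lambda>0$ on $(0,1)$.

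To separate these solutions, I would fix $J\neq J'$ nonempty and, after swapping if necessary, pick $i^*\in J\setminus J'$. Since $i^*\notin J'$, the subsolution $\underline{u}_{J'}$ vanishes on $I_{i^*}^+$ together with its adjacent intervals in $\O_-$ (or on the corresponding one-sided configuration when $i^*\in\{1,n+1\}$), so Theorem \ref{th1.1} applies to $u_0=\underline{u}_{J'}$. A careful reading of its proof shows that, under the hypothesis $T_\mathrm{max}=\infty$ granted here, the smallness estimate is actually uniform in time: for every compact $K\subset I_{i^*}^+$ and every $\eta>0$ there exists $\lambda^\star<0$ with
\[
u(x,t;\underline{u}_{J'},\lambda)<\eta \qquad\text{for all } x\in K,\ t\geq 0,\ \lambda<\lambda^\star,
\]
because once the bound $u<\eta$ on $K$ forces $\lambda+au<0$ there, the parabolic maximum principle propagates that bound along the whole maximal existence interval $[0,\infty)$. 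Letting $t\to\infty$ gives $U_{J'}^\lambda\leq\eta$ on $K$. On the other hand, $U_J^\lambda\geq\theta_{\lambda,i^*}$ on $I_{i^*}^+$, and a standard test-function argument against the first Dirichlet eigenfunction of $(\gamma_{i^*},\rho_{i^*})$ delivers an a priori lower bound forcing $\|\theta_{\lambda,i^*}\|_\infty\to+\infty$ as $\lambda\da-\infty$. Choosing $K$ to contain a maximizer of $\theta_{\lambda,i^*}$ and taking $\eta:=1$ then produces $U_J^\lambda\neq U_{J'}^\lambda$ for $\lambda$ sufficiently negative.

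Since only finitely many pairs $(J,J')$ are involved, one then selects $\lambda_c<0$ negative enough that all such distinctions hold simultaneously, which produces the $2^{n+1}-1$ distinct positive solutions claimed. The main obstacle is not in the dynamical step just sketched, which is soft once its ingredients are in place, but in the hypothesis that sustains it --- the global boundedness \eqref{iii.33} of the trajectory issuing from $\underline{u}_{1\ldots 1}$. It is precisely this boundedness that the authors flag as open, so the true difficulty of the conjecture gets pushed into the parabolic dynamics rather than the elliptic count.
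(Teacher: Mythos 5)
Your proposal is correct and takes essentially the same route as the paper: evolve the subsolutions $\underline{u}_J$ supported on subsets of the positive bumps, dominate them by the globally bounded trajectory issuing from $\underline{u}_{1\ldots1}$ to obtain monotone bounded limits that are positive steady states, and separate these $2^{n+1}-1$ limits by combining Theorem \ref{th1.1} (whose proof, as you note, already yields the smallness bound for all time where the solution is defined) with the lower bound $U_J^\lambda\geq\theta_{\lambda,i}$ on $I_i^+$ for $i\in J$. Your eigenfunction estimate forcing $\|\theta_{\lambda,i}\|_\infty\to\infty$ merely makes quantitative the separation step that the paper dispatches as ``a genuine combinatorial argument.''
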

\begin{proof}
Under these assumptions, thanks to the main theorem of Langlais and Phillips \cite{LP}, for every $\l<\mu$ the point-wise limit
$$
  \t_{\{\l,(1,...,1)\}}:=\lim_{t\ua \infty}u(\cdot,t;u_0,\l)
$$
provides us with a positive solution of \eqref{1.1} such that $u_0\leq \t_{\{\l,(1,...,1)\}}$. Thus,
for every $i\in \{1,...,n+1\}$,
$$
  \t_{\l,i}\leq \t_{\{\l,(1,...,1)\}} \quad \hbox{in}\;\; I^+_i=(\g_i,\r_i),
$$
while, thanks to Theorem \ref{th3.1},
$$
  \lim_{\l\da -\infty} \t_{\{\l,(1,...,1)\}} =0 \quad \hbox{in}\;\; \bigcup_{j=1}^nI^-_j.
$$
In particular, for sufficiently negative $\l<0$,  the positive solution $\t_{\{\l,(1,...,1)\}}$ has,
at least, one peak on each of the $n+1$ intervals $(\g_i,\r_i)$, $i\in\{1,...,n+1\}$.
\par
Now, for every $(d_1,\ldots,d_{n+1})\in \{0,1\}^{n+1}$ such that $\sum_{i=1}^{n+1}d_i\leq n$,
we consider the initial data
$$
  \tilde u_0:= \underline{u}_{(d_1,...,d_{n+1})}\equiv \left\{ \begin{array}{lll} d_i \t_{\{\l,i\}} &\quad \hbox{in}\;\; I_i^+, & \qquad
  i\in\{1,...,n+1\}, \\[1ex] 0 &\quad \hbox{in}\;\; I_j^-, & \qquad
  j\in\{1,...,n\}, \end{array}\right.
$$
having, at least, $\sum_{i=1}^{n+1}d_i\leq |n|$ peaks. Arguing as before, it becomes apparent that $u(x,t;\tilde u_0,\l)$ is a subsolution
of \eqref{1.1} for all $t \in I_\mathrm{max}(\tilde u_0,\l)$. Moreover, by the parabolic maximum principle, since $\tilde u_0\leq u_0$, we have that, for every $x\in [0,1]$ and $t\in I_\mathrm{max}(\tilde u_0,\l)$,
$$
  u(x,t;\tilde u_0,\l)\leq u(x,t;u_0,\l).
$$
Thus, $T_\mathrm{max}(\tilde u_0,\l)=\infty$ and, for sufficiently negative $\l$,
$ u(x,t;\tilde u_0,\l)$ is increasing in time and bounded above. Hence,
$$
   \t_{\{\l,(d_1,...,d_{n+1})\}} \equiv \lim_{t\ua\infty}u(\cdot,t;\tilde u_0,\l)\leq \lim_{t\ua\infty}u(\cdot,t;u_0,\l) \equiv \t_{\{\l,(1,...,1)\}}
$$
provides us with a positive solution of \eqref{1.1} such that, according to Theorems \ref{th1.1}
and \ref{th3.1},
$$
  \lim_{\l\da -\infty} \t_{\{\l,(d_1,...,d_{n+1})\}} =0 \quad \hbox{in}\;\;
  \bigcup_{j=1}^n I^-_j \cup \bigcup_{i\in Z} I^+_i
$$
where
$$
 Z\equiv \{i\in\{1,...,n+1\}:d_i=0\}.
$$
Since,
$$
  0<\t_{\{\l,i\}}\leq \t_{\{\l,(d_1,...,d_{n+1})\}}\quad\hbox{in}\;\; I^+_i \;\;
  \hbox{for all}\;\; i\in \{1,...,n+1\}\setminus N,
$$
a genuine combinatorial argument ends the proof, as  these solutions differ as $\l\da -\infty$.
\end{proof}

\section{The case $n=1$}

\noindent Throughout this section, we assume that
$$
  a(x) = \sin(3\pi x),\qquad x\in [0,1].
$$
Then, the global bifurcation diagram of the positive solutions of \eqref{1.1} looks like shows Figure \ref{FIG:sin3 diag}. Our numerical experiments suggest that the set of positive solutions of \eqref{1.1} consists of the component $\mathscr{C}^+$, which  bifurcates supercritically from $u=0$ at $\l=\pi^2$,  because
$$
  D_{1}=-2 \int_0^1a(x)\sin^3(\pi{x})\,dx=-2 \int_0^1\sin (3\pi {x}) \sin^3(\pi{x})\,dx = \frac{1}{4}>0.
$$

\begin{figure}[h!]		
\centering
\begin{subfigure}[t]{0.47\textwidth}
\centering
\includegraphics[scale = 0.5]{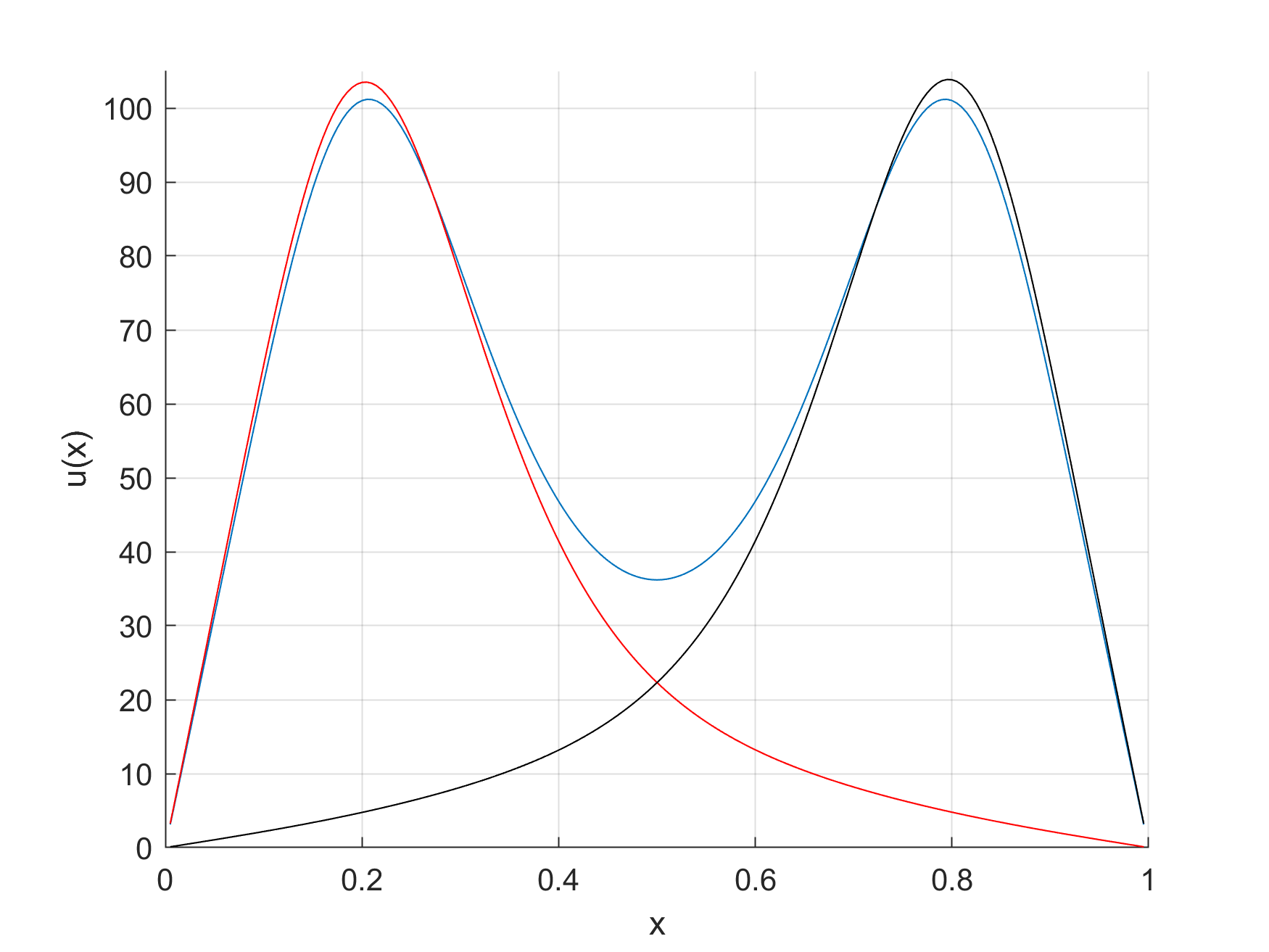}
\caption{Positive solutions for $\lambda \approx -21$: $10(1)$ in red,
$01(1)$ in black, and $11(2)$ in blue.}\label{FIG:sin3 sol examples}
\end{subfigure}%
\begin{subfigure}[t]{0.47\textwidth}
\centering
\includegraphics[scale = 0.5]{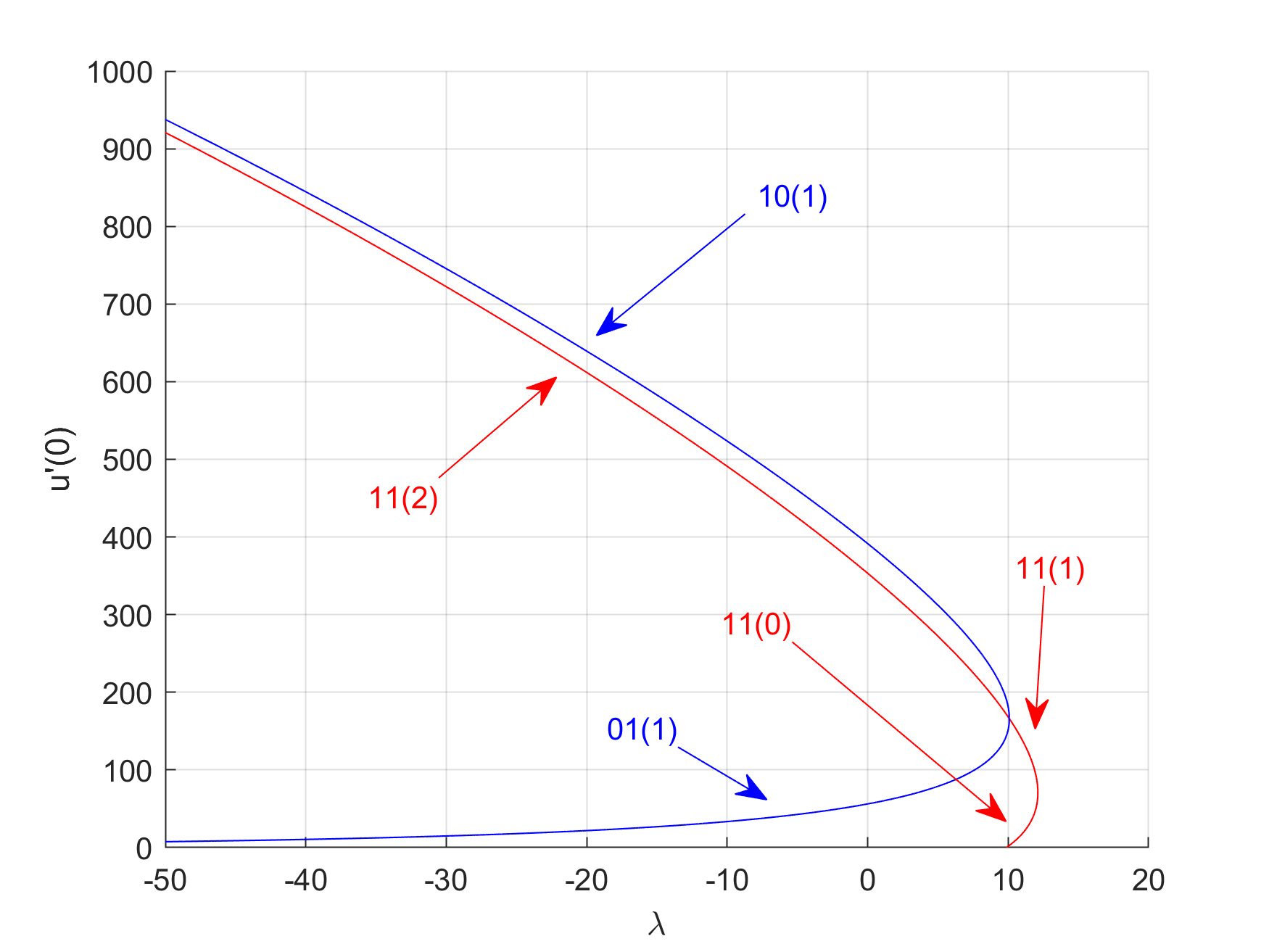}
\caption{Global bifurcation diagram.}\label{FIG:sin3 diag}
\end{subfigure}	
\caption{Numerical results for $a(x) = \sin(3\pi x)$.}
\end{figure}

This component exhibits a turning point at $\lambda_t \approx 12.1$, and a secondary bifurcation at $\lambda_s\approx 10.1$, as shown in the global bifurcation diagram plotted
in Figure \ref{FIG:sin3 diag}. In this and in all subsequent global bifurcation diagrams we are plotting the parameter $\l$, in abscisas, versus the derivative of the solution at the origin, $u'(0)$, in ordinates. This allows to differentiate between all admissible positive solutions. By the symmetries of the problem, the reflection about $0.5$ of any positive solution of \eqref{1.1} provides with another solution, though there is no way to differentiate between such solutions if, instead of plotting $\l$ versus $u'(0)$,  we plot $\l$ versus the $L_p$-norm of the solutions for some $p\geq 1$. Should we proceed in this way, we could not differentiate between, e.g.,  the solutions of types $01(1)$ and $10(1)$, as they have the same $L_p$-norms for all $p\geq 1$ (see the plots of these solutions in Figure \ref{FIG:sin3 sol examples}).

According to Theorem \ref{th2.5}, $\mc{P}_\l(\mathscr{C}^+)=(-\infty,\l_t]$ and, for every $\l\in (\pi^2,\l_t]$, the minimal positive solution of \eqref{1.1} is the unique stable positive
solution of \eqref{1.1}. Actually, for every
$\l \in (\pi^2,\l_t)$, the minimal solution is linearly asymptotically stable and hence, its Morse index equals zero. Moreover, by Theorem \ref{th2.5},  $(\l_t,\t^\mathrm{min}_{\l_t})$
is the unique positive solution at $\l_t$, it is linearly neutrally stable, and it is a
quadratic subcritical turning point of the component $\mathscr{C}^+$.
The solutions on the upper half curve through the subcritical turning point  $(\l_t,\t^\mathrm{min}_{\l_t})$ have one-dimensional unstable manifold, and actually are of type $11(1)$  until $\l$ reaches the secondary bifurcation point, $\l_s$, where they became unstable with Morse index two and type $11(2)$ for any further smaller value of $\l$.
\par
At $\l=\l_s$,  two (new) secondary branches of positive solutions with respective types $01(1)$ and $10(1)$ bifurcate subcritically. Naturally, $u'(0) \approx 0$ for the solutions of type $01(1)$, while $u'(0)$ is large for those  of type $10(1)$, as confirmed  by our numerical experiments. These three branches seem to be globally defined for all further smaller values of $\l$, $\l<\l_s$.
\par
In full agreement with Conjecture \ref{con3.1}, the problem \eqref{1.1} has three positive solutions for every $\l <\l_s$. Figure \ref{FIG:sin3 sol examples} shows the plots of these solutions at a value $\l \approx -21$. Note that the number of peaks of the solutions coincides with the dimension of their respective unstable manifolds for all $\l<\l_s$.

\section{The case $n=2$}\label{SEC:k=2}

\noindent Throughout this section we have chosen
$$
  a(x) = \sin(5\pi x),\qquad x\in [0,1].
$$
By Conjecture \ref{con3.1}, we expect to have $2^3-1=7$ positive solution for sufficiently negative $\lambda$. The global bifurcation diagram computed in this case has been plotted in Figure \ref{FIG:sin5 complete diag}.
\begin{figure}[h!]
\centering
\includegraphics[scale=0.7]{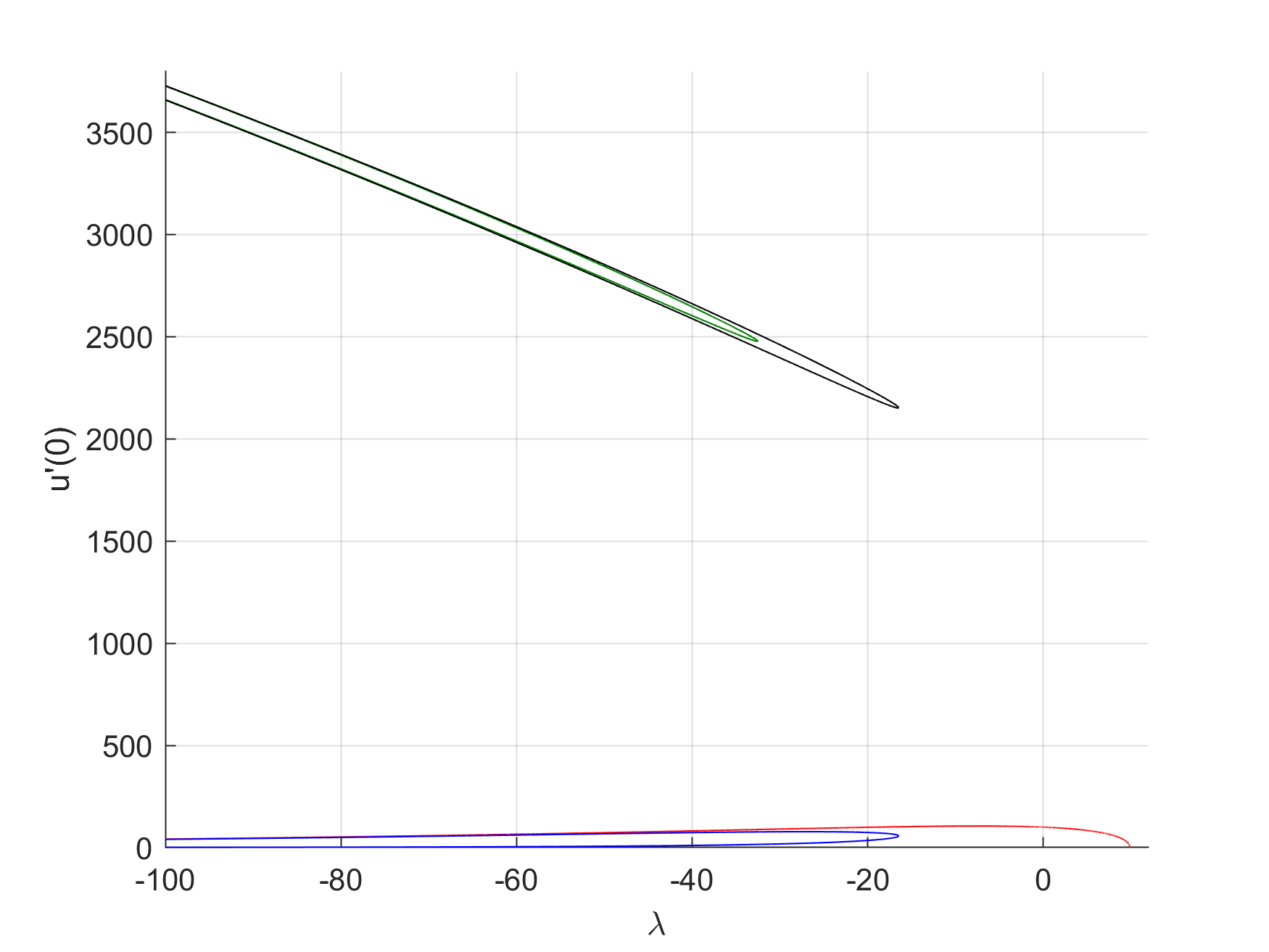}
\caption{Global bifurcation diagram for $a(x) = \sin(5\pi x)$.}\label{FIG:sin5 complete diag}
\end{figure}

It consists of $4$ components, $3$ global folds isolated from $u=0$, plus $\mathscr{C}^+$, which in this occasion bifurcates subcritically from $u=0$ at $\l=\pi^2$, because
$$
  D_{1}= 2 \int_0^1\sin (5\pi {x}) \sin^3(\pi{x})\,dx = 0
$$
and
$$
    D_{2}= -2 \int_0^1 w_{1}(x)\sin (5\pi {x}) \sin^2(\pi{x})\,dx = -\frac{5}{256 \pi^2} <0 .
$$
None of these components, neither $\mathscr{C}^+$ nor any of the three folds plotted in Figure \ref{FIG:sin5 complete diag}, exhibited any secondary bifurcation along it.
\par
Figure \ref{FIG:sin5 complete diag zoom} shows two magnifications of the most significant pieces of
the global bifurcation diagram plotted in Figure \ref{FIG:sin5 complete diag} together with
the superimposed types of the solutions along each of the solution curves plotted on it.
Precisely,  Figure \ref{FIG:sin5 top} shows a zoom of the two superior global folds plotted in Figure \ref{FIG:sin5 complete diag} around their respective turning points. These solutions look larger in these global bifurcation diagram because
$$
  \lim_{\l\da -\infty}u'_\l(0)=+\infty
$$
for any positive solution $(\l,u_\l)$ having mass in $(0,0.2)$.
Figure \ref{FIG:sin5 top} shows the types of the positive solutions along each of the half-branches
of the two folds. They change  from type $100(1)$ to type $110(2)$ as  they cross the turning
point of the exterior component, while they are changing from type $101(2)$ to type $111(3)$ as
the turning point of the interior folding is crossed.

\begin{figure}[h!]		
\centering
\begin{subfigure}[t]{0.47\textwidth}
\centering
\includegraphics[scale = 0.5]{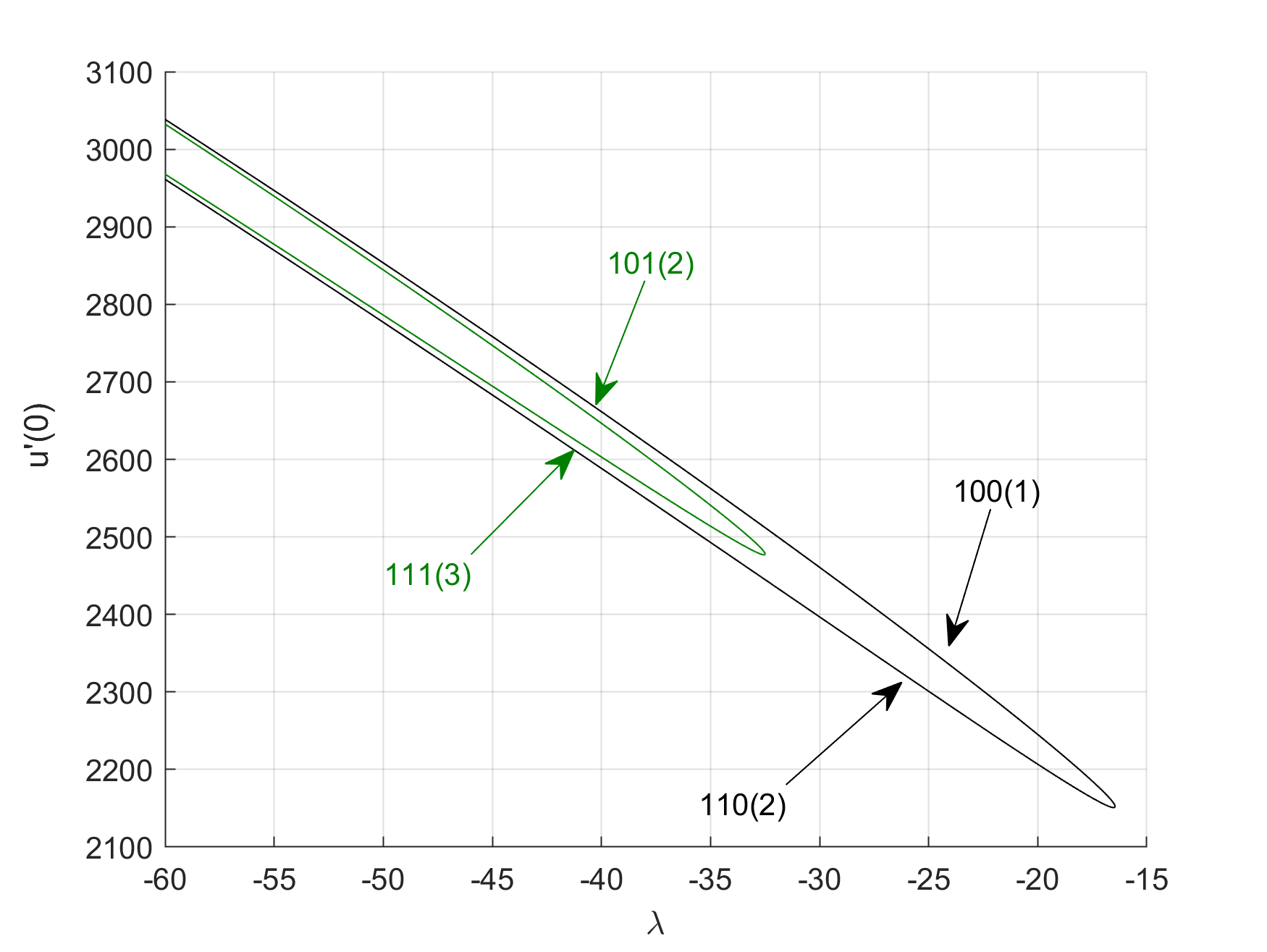}
\caption{Upper part magnification.}\label{FIG:sin5 top}
\end{subfigure}%
\begin{subfigure}[t]{0.47\textwidth}
\centering
\includegraphics[scale = 0.5]{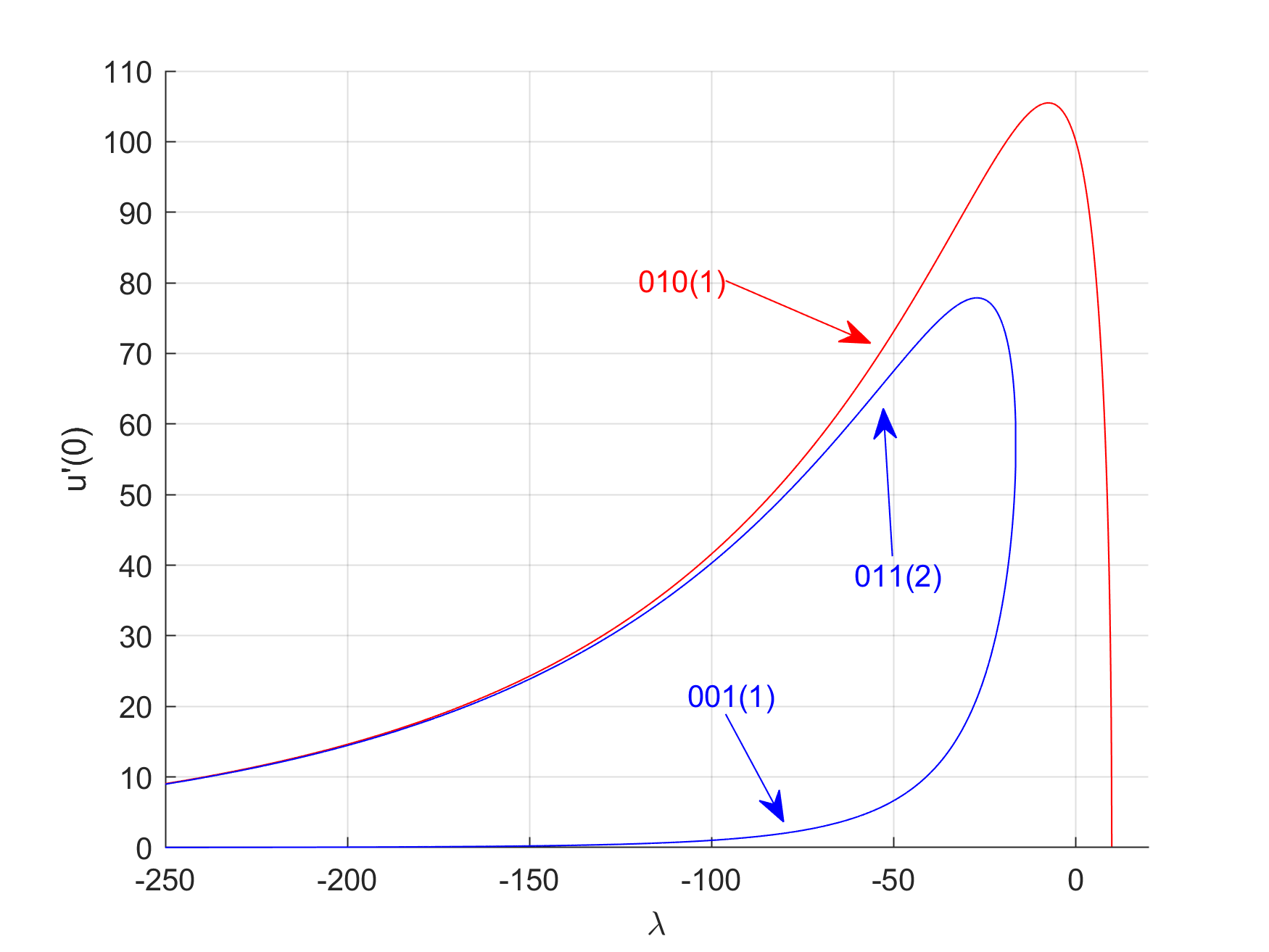}
\caption{Lower part magnification.}\label{FIG:sin5 bottom}
\end{subfigure}	
\caption{Two significant magnifications of Figure \ref{FIG:sin5 complete diag}.}
\label{FIG:sin5 complete diag zoom}
\end{figure}
	
Not surprisingly, since $\mathscr{C}^+$ does not exhibit any secondary bifurcation along it, all the solutions of $\mathscr{C}^+$ that we have computed are of type $010(1)$, in complete agreement with the exchange stability principle of Crandall and Rabinowitz \cite{CR2}, because $u=0$ is linearly stable for all $\l<\pi^2$.
\par
Lastly, the solutions along the interior folding in Figure \ref{FIG:sin5 bottom} change type from
$001(1)$ to $011(2)$ when the turning point of this components is switched on.
Moreover, for sufficiently negative $\l$, \eqref{1.1} admits 7 positive solutions, with respective types $$
   001(1), \qquad 010(1), \qquad   100(1), \qquad  101(2), \qquad  110(2), \qquad  011(2), \qquad  111(3),
$$
in full agreement with Conjecture \ref{con3.1}. In particular, in any circumstances, the number of peaks of these solutions, when they exist, equals their respective Morse indices.

\section{The case $n=3$}\label{SEC:k=3}
\noindent Throughout this section we make the choice
$$
  a(x) = \sin(7\pi x),\qquad x\in [0,1].
$$
According to Conjecture \ref{con3.1}, we expect
to have $2^4-1=15$ positive solution for sufficiently negative $\lambda$. Since
$$
   D_{1} = -2 \int_0^1 \sin(7\pi x)\sin^3(\pi x)\,dx = 0
$$
and
$$
  D_{2} = -2 \int_0^1 w_{1}(x)\sin (7\pi {x}) \sin^2(\pi{x})\,dx = \frac{1}{128 \pi^2}>0,
$$
the component $\mathscr{C}^+$ bifurcates supercritically from $u=0$ at $\l=\pi^2$, and exhibits a secondary bifurcation at $\lambda_s\approx -2.85$. It has been plotted
in Figure \ref{FIG:sin7 bottom}, which shows a significant piece of the global bifurcation diagram
of the positive solutions of \eqref{1.1}.

\begin{figure}[h!]		
\centering
\begin{subfigure}[t]{0.47\textwidth}
\centering
\includegraphics[scale = 0.5]{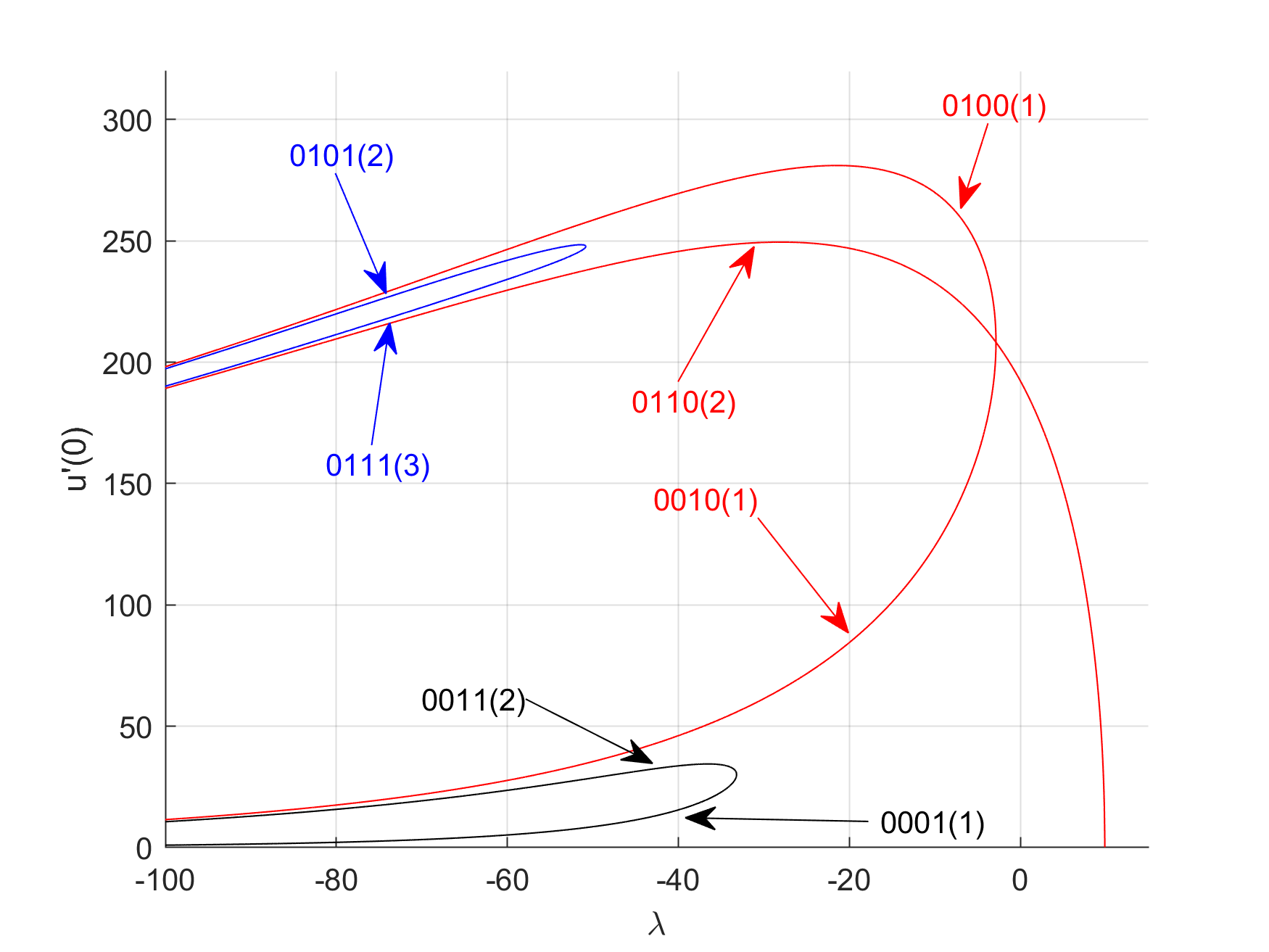}
\caption{Small positive solutions.}\label{FIG:sin7 bottom}
\end{subfigure}%
\begin{subfigure}[t]{0.47\textwidth}
\centering
\includegraphics[scale = 0.5]{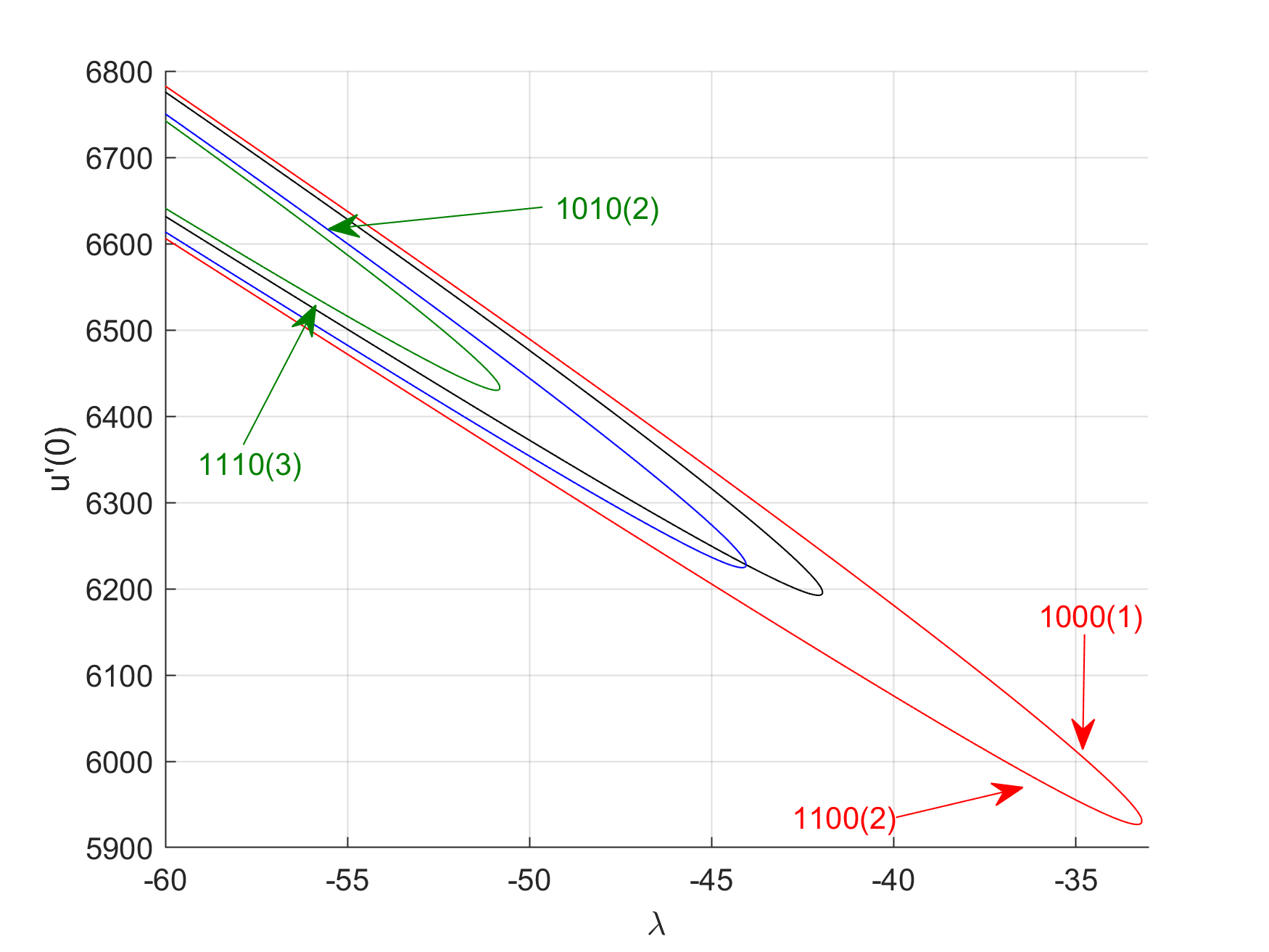}
\caption{Large positive solutions.}\label{FIG:sin7 top}
\end{subfigure}	
\begin{subfigure}[t]{0.47\textwidth}
\centering
\includegraphics[scale = 0.5]{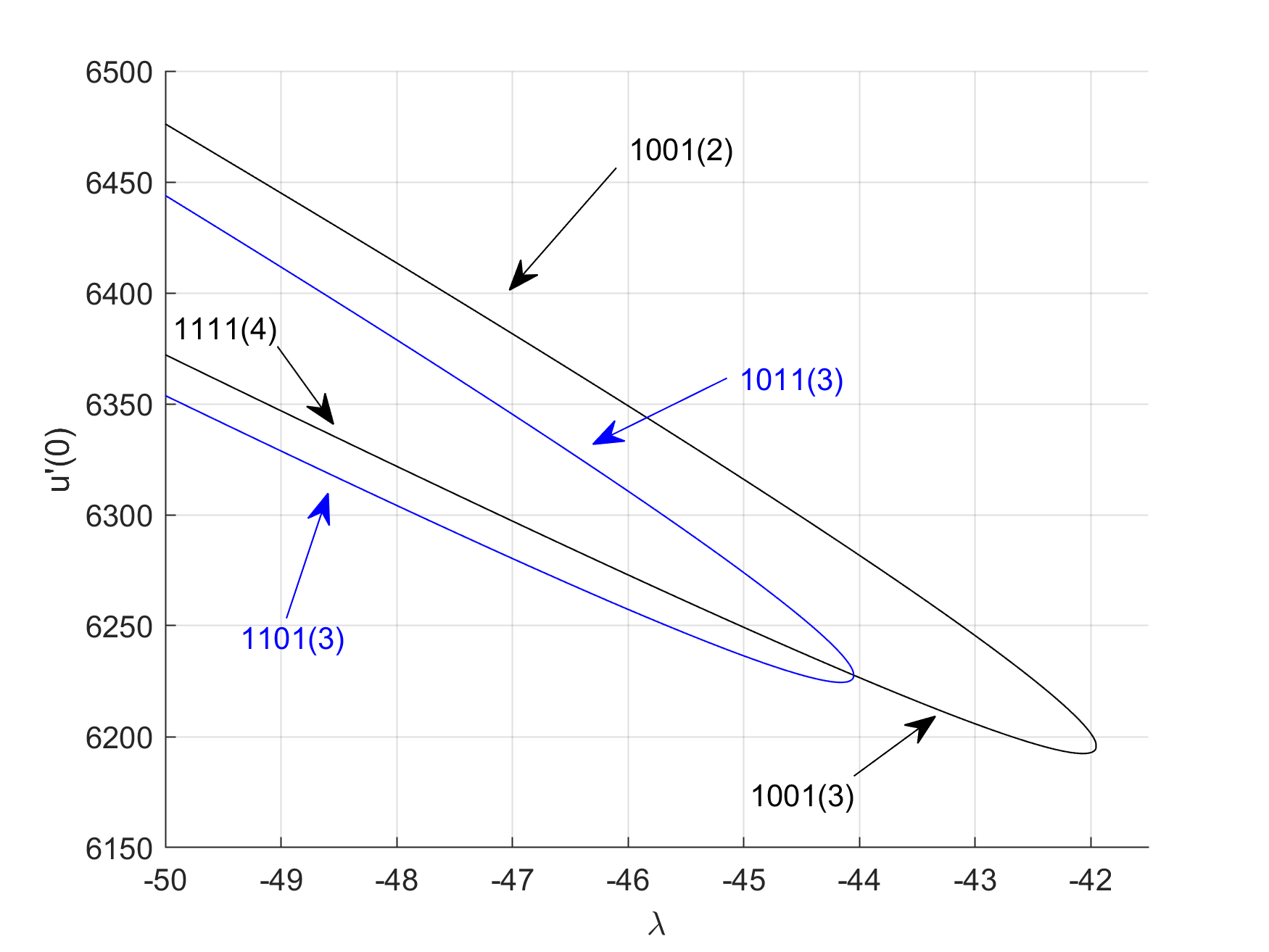}
\caption{A magnification of Figure \ref{FIG:sin7 top}.}\label{FIG:sin7 second bif}
\end{subfigure}	
\caption{Scattered bifurcation diagrams for  $a(x)=\sin(7\pi x)$.}
\label{FIG:sin7 global}
\end{figure}
Figure \ref{FIG:sin7 global}  consists of Figures \ref{FIG:sin7 bottom}, \ref{FIG:sin7 top} and \ref{FIG:sin7 second bif}, where we are plotting,
separately, the most significant branches of positive solutions that we have computed in our
numerical experiments. By simply looking at the ordinate axis in Figures \ref{FIG:sin7 bottom} and  \ref{FIG:sin7 top}, it is easily realized the ultimate reason why we are plotting these
components in two separate figures. Whereas for  those plotted on
the left $u'(0)<3\cdot 10^2 $, for those plotted on the right  we have that $u'(0)>59\cdot 10^2$.
So, plotting them in the same global bifurcation diagram would have pushed down against the
$\l$-axis all the branches on the left, much like in Figure \ref{FIG:sin5 complete diag}, but
straightening this pushing effect. Figure \ref{FIG:sin7 second bif} shows  a zoom of the secondary bifurcation arising  in Figure \ref{FIG:sin7 top}, to detail the types of the
positive solutions around it.
\par
Since $\mathscr{C}^+$ bifurcates from $u=0$ supercritically, by the exchange stability principle, \cite{CR2}, its solutions have Morse index zero until they reach the turning point. The bifurcation is very vertical in this case, hence, it is hard to determine for which $\l$ the turning point occurs. Anyway, Morse index increases to one as we pass the turning point and it remains the same until we reach the bifurcation point at $\lambda_s\approx -2.85$, where the Morse index becomes two for any smaller value of $\l$. By Theorem \ref{th2.1}, the solutions $(\l,u)\in \mathscr{C}^+$ with $\l\approx \pi^2$ have the form $s(\sin(\pi x)+y(s))$ for some $s>0$, $s\approx 0$. Thus, they have a single peak around $0.5$. Once crossed $\l_s$, these solutions
are of type $0110(2)$. The solutions along the bifurcated branches have types $0100(1)$ and
$0010(1)$, respectively. So, this piece of the global bifurcation diagram seems to
be generated by the two internal positive bumps of the weight function $a(x)$.
Besides the component $\mathscr{C}^+$,  Figure \ref{FIG:sin7 bottom} shows two additional global subcritical folds. The solutions on the lower half-branch of the inferior folding have type $0001(1)$
and change to type $0011(2)$ on its upper half-branch, as the turning point of this component is crossed. Similarly, the solutions on the lower half-branch of the superior folding have type $0111(3)$
and change to type $0101(2)$ on the upper one.  All those solutions can be generated, very easily,
by taking into account that its type must begin with a $0$, because $u'(0)$ is small,
while the remaining three digits should cover all the possible combinations of three elements taken from $\{0,1\}$. Thus, counting $u=0$, we have a total of $2^3=8$ solutions for sufficiently
negative $\l$.
\par
Analogously, Figure \ref{FIG:sin7 top} shows all solutions with $u'(0)$ sufficiently large, whose types
must begin with $1$. Thus, it also shows a total of $2^3=8$ solutions. According to our numerical
experiments, these solutions are distributed into three components. Namely, two isolated global subcritical folds, plus a third component consisting of two interlaced subcritical folds, which
is the component magnified and plotted in Figure \ref{FIG:sin7 second bif}. The bifurcation along this component occurs at $\lambda_s\approx -44.05$. 	
\par
According to these findings, based on a series of rather systematic numerical experiments,
the sum of the four digits of the type of the solutions, i.e., their number of peaks, always
provide us with the dimensions of their unstable manifolds, except for the solutions in a right neighborhood of the two bifurcation points on Figures \ref{FIG:sin7 bottom}   and
\ref{FIG:sin7 second bif}, where the solutions have types $0000(1)$ and $1001(3)$, respectively.
Nevertheless, for sufficiently negative $\l$, this is a general rule.\\
\begin{figure}[h!]		
	\centering
	\begin{subfigure}[t]{0.47\textwidth}
		\centering
		\includegraphics[scale = 0.5]{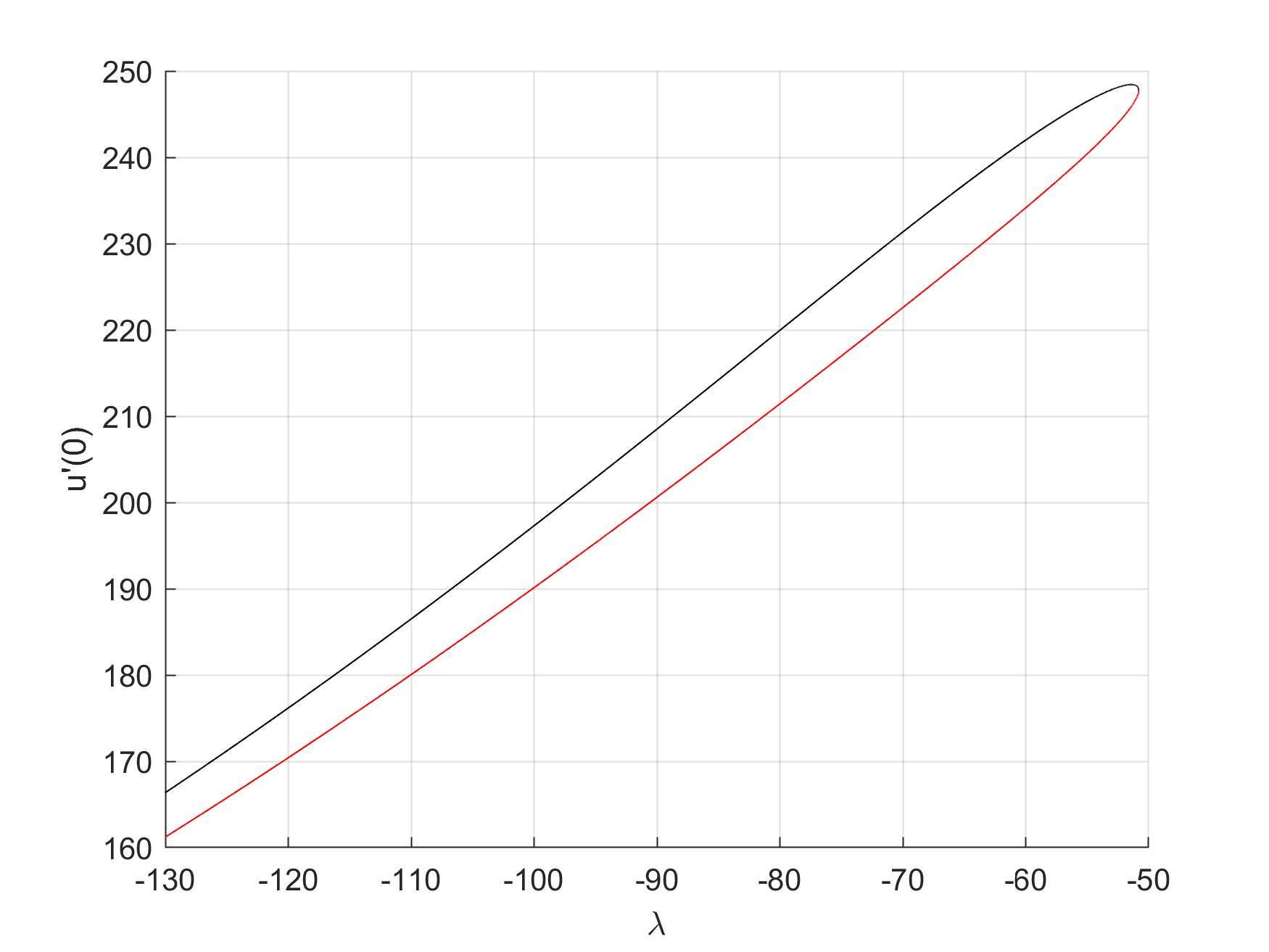}
		\caption{The blue upper fold of Figure \ref{FIG:sin7 bottom}.}\label{FIG:sin7_solutions_branch}
	\end{subfigure}%
	\begin{subfigure}[t]{0.47\textwidth}
		\centering
		\includegraphics[scale = 0.5]{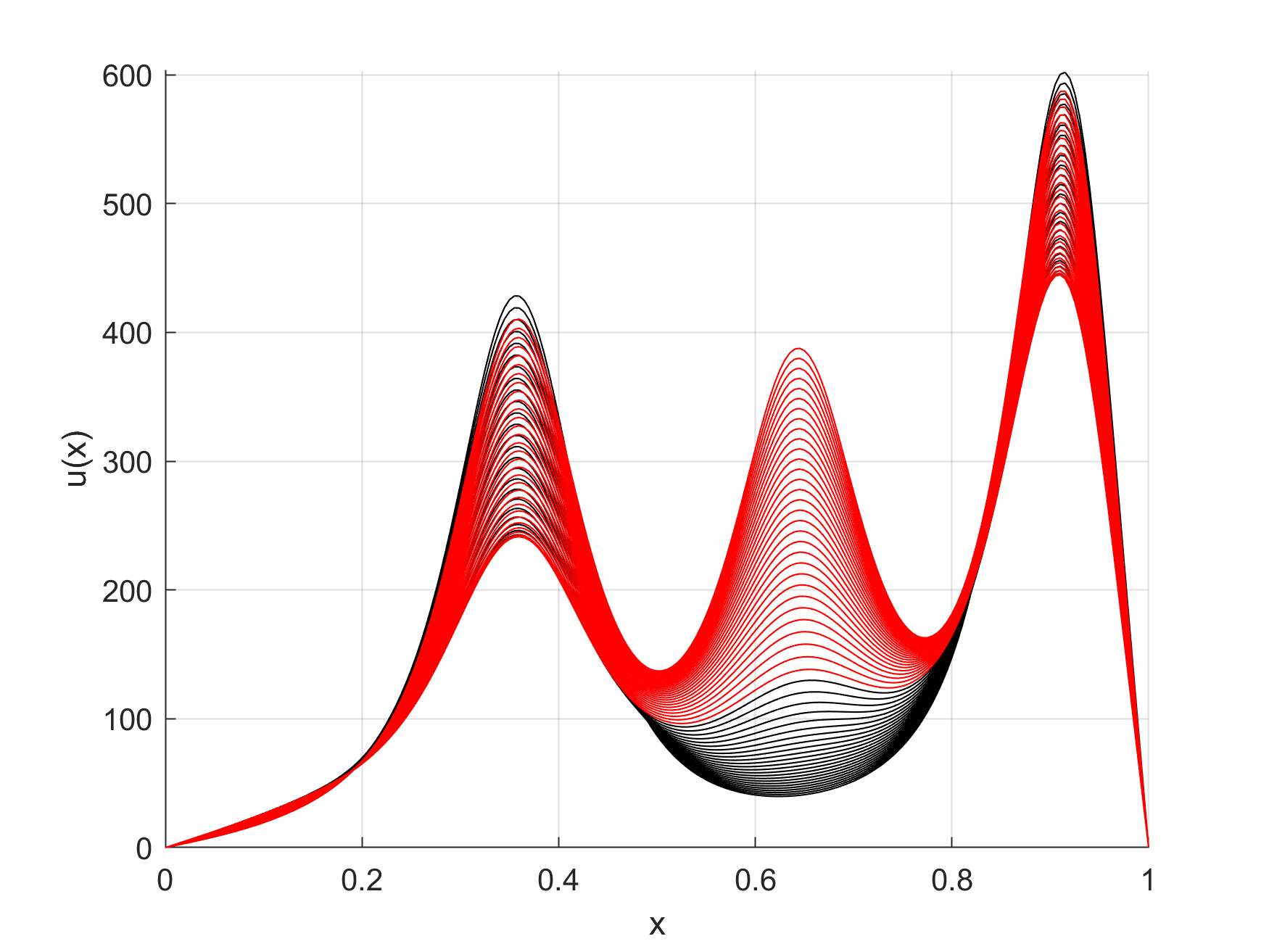}
		\caption{Plots of solutions of type $0111(3)$ (red) and $0101(2)$ (black) on the branch plotted on the left.}\label{FIG:sin7_solutions}
	\end{subfigure}
	\caption{Plots of solutions (right) along the branch (left) from Figure \ref{FIG:sin7 bottom}.}\label{FIG:sin7_solutions along branch}
\end{figure}
\indent In Figure \ref{FIG:sin7_solutions along branch} we have plotted a series of solutions of types $0111$ and $0101$ along the superior fold (blue branch) of Figure \ref{FIG:sin7 bottom}. The solutions on the lower half-branch are of type $0111$, because they exhibit three peaks,  and have been plotted in Figure \ref{FIG:sin7_solutions} using red color. As the turning point is approached,  the peaks of these solutions decrease until the central one is almost glued as the turning point is crossed. Once switched the turning point, the solutions need some additional, very short, room
for becoming of type $0101$ pure, since the central peak still persists for a while,
as it is illustrated in Figure \ref{FIG:sin7_solutions}, where those solutions have been plotted in black color. Essentially, as the turning point is switched, the external peaks of the $0111$ solutions increase, while the central peak is glued.

\section{The case $n=2$ with an additional parameter $\mu$}

\noindent In this section, we make the following choice
\begin{equation}
a(x):= \left\{ 	\begin{array}{ll} \mu \sin(5\pi x) & \quad \text{ if }\;\; x\in [0,0.2)\cup(0.8,1],
\\[1ex]  \sin(5\pi x) & \quad\text{ if }\;\; x\in [0.2,0.8],  \end{array}\right.
\end{equation}
where $\mu\geq 1$ is regarded as a secondary bifurcation parameter for \eqref{1.1}. The  behavior of this model for  $\mu = 1$ has been already described  in Section \ref{SEC:k=2}. The bifurcation direction is
\begin{equation*}
	D_{1} = -\frac{\sqrt{\frac{1}{2} \left(5-\sqrt{5}\right)} \left(5-\sqrt{5}\right)^2 (\mu-1)}{128 \pi}<0
\end{equation*}
for all $\mu>1$ and hence, the bifurcation is always subcritical.
\par
According to our numerical experiments, as we increase the value of $\mu$, the global bifurcation diagram remains very similar to the one plotted in Figures \ref{FIG:sin5 complete diag} and \ref{FIG:sin5 complete diag zoom},
up to reaching the critical value $\mu_{1} \approx 3.895$, where the global structure of the bifurcation
diagram changes. Figures \ref{FIG:sin5 mod mu 3.5} and \ref{FIG:sin5 mod mu 3.89} plot the
corresponding global bifurcation diagram for $\mu =3.5$ and $\mu=3.89$, respectively, whose global structure, topologically, coincides with the one already computed in Section \ref{SEC:k=2} for $\mu=1$.
\par
Essentially, as $\mu$ separates away from $\mu=1$ increasing towards $\mu=\mu_1$, the two subcritical folds lying in the upper part of the global bifurcation diagram plotted in Figure \ref{FIG:sin5 complete diag} are getting closer approaching the peak of the corresponding component $\mathscr{C}^+\equiv \mathscr{C}_\mu^+$, as well as the global subcritical folding beneath, as sketched in
Figure \ref{Figmuscale}.
\par
According to our numerical experiments, at the critical value of the parameter $\mu_1$, the set of positive solutions of \eqref{1.1} consists of two components, instead of four, because three of the previous four components of the problem for $\mu<\mu_1$ are now touching at a single point
playing the role of a sort of \emph{organizing center} with respect to the secondary parameter $\mu$, whereas the upper interior supercritical folding remains separated away from $\mathscr{C}_{\mu_1}^+$. Naturally, $\mathscr{C}_{\mu_1}^+$ consists of $\lim_{\mu\ua \mu_1} \mathscr{C}_\mu^+$
plus the limits of the  previous exterior upper folds and folds beneath $\mathscr{C}_\mu^+$
for $\mu<\mu_1$.
\begin{figure}[h!]		
		\centering
		\begin{subfigure}[t]{0.47\textwidth}
			\centering
			\includegraphics[scale = 0.5]{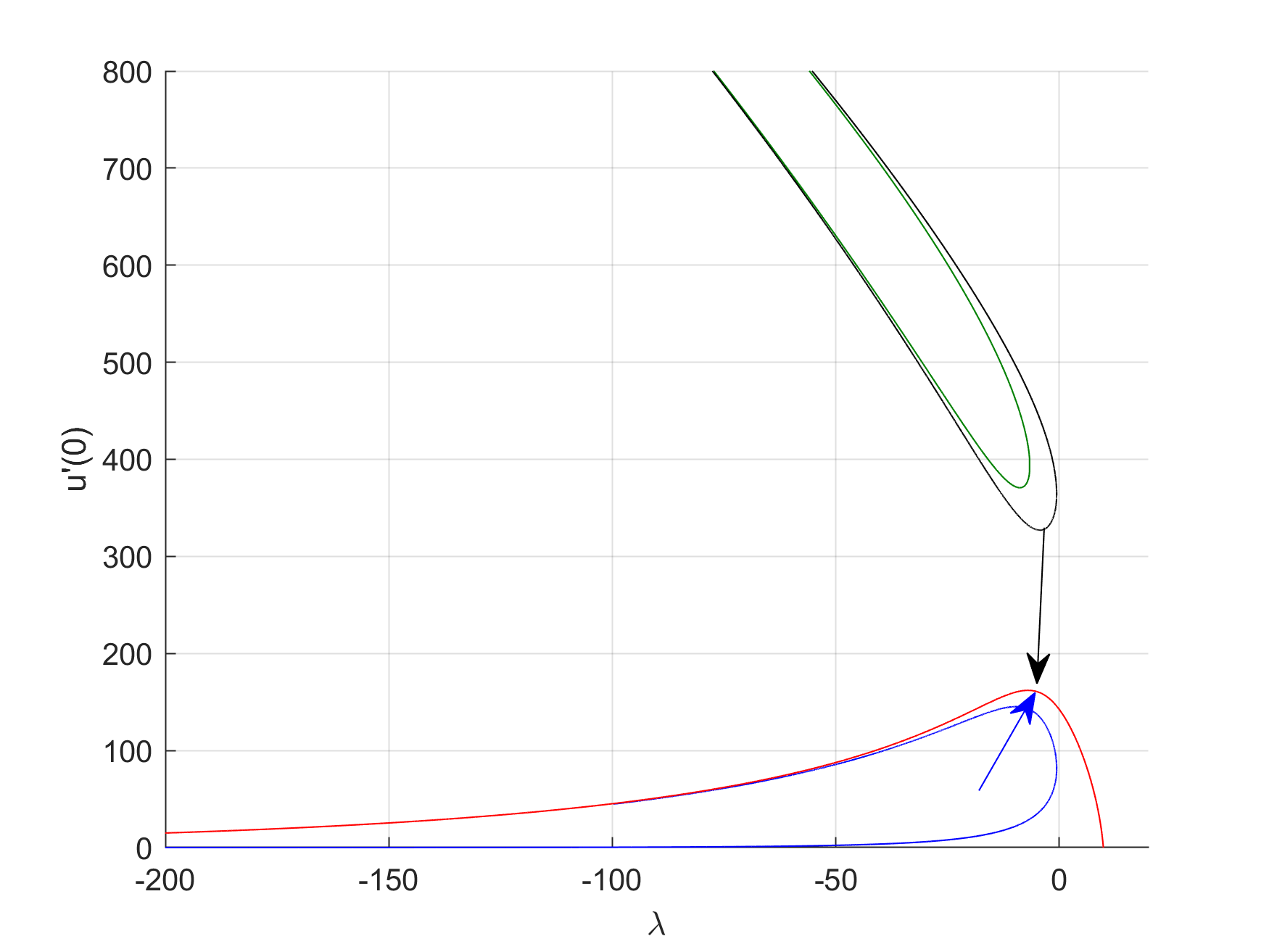}
			\caption{$\mu = 3.5<\mu_{1}$}\label{FIG:sin5 mod mu 3.5}
		\end{subfigure}%
		\begin{subfigure}[t]{0.47\textwidth}
			\centering
			\includegraphics[scale = 0.5]{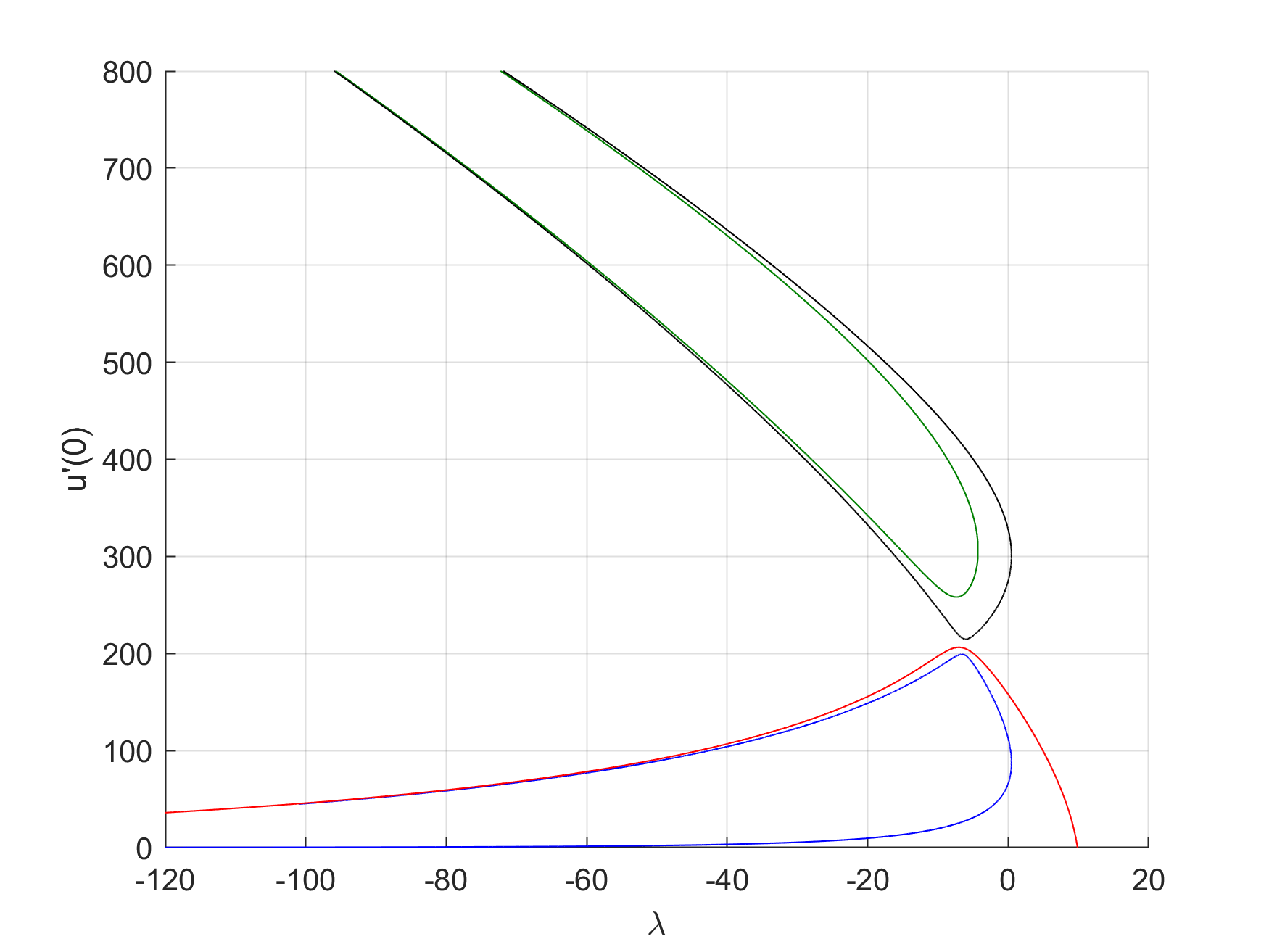}
			\caption{$\mu = 3.89<\mu_{1}$}\label{FIG:sin5 mod mu 3.89}
		\end{subfigure}
		\caption{Global bifurcation diagrams for $\mu<\mu_{1}$.}
\label{Figmuscale}
\end{figure}
\par
The numerics suggests that, as $\mu$ increases separating away from $\mu_1$, the touching point
of the old three components spreads out into two secondary bifurcation points from the new component $\mathscr{C}_\mu^+$, in such a way that the \lq\lq previous\rq\rq \, folds do now bifurcate from
$\mathscr{C}_\mu^+$ at these two bifurcation values with respect to the primary parameter $\l$, say $\l_1(\mu)>\l_2(\mu)$, as illustrated in Figure \ref{FIG:sin5 mod mu 3.92}, where it becomes apparent how the old upper interior folding component still remains separated away from $\mathscr{C}_\mu^+$.
Essentially, the upper half-branch of the old folding above $\mathscr{C}_\mu^+$ together with the lower
half-branch of the old interior folding provide us with the branch bifurcating from $\mathscr{C}_\mu^+$
at $\l_2(\mu)$, for  $\mu>\mu_1$, whereas the lower half-branch of the old folding above
$\mathscr{C}_\mu^+$ together with the upper half-branch of the old interior folding provide us with
the new branch bifurcating from $\mathscr{C}_\mu^+$ at $\l_1(\mu)$. And this situation persists for all $\mu\in (\mu_1,\mu_2)$, where  $\mu_{2}\approx 3.925$. The bigger is $\mu$ in the interval $(\mu_1,\mu_2)$, the more separated stay the two bifurcation values $\l_1(\mu)$ and $\l_2(\mu)$ and the more approaches the exterior upper fold to the component $\mathscr{C}_\mu^+$. The separation between
the bifurcation values is very well  illustrated by the next table that provides us with the corresponding values of $\l_1(\mu)$ and $\l_2(\mu)$ for three values of $\mu$ in $(\mu_1,\mu_2)$:

\begin{table}[h!]
	\begin{tabular}{|c|c|c|c|}
		\hline
		$\mu$ & 3.9 & 3.91 & 3.92 \\
		\hline
		$\lambda_{1}(\mu)$ & -5.1186 & -4.4513 & -3.9938 \\
		\hline
		$\lambda_{2}(\mu)$ & -7.5845 & -8.4129 & -9.0284 \\
		\hline
	\end{tabular}
\vspace{0.4cm}
	\caption{$\lambda_i(\mu)$ for three values of $\mu\in(\mu_{1},\mu_{2})$.}\label{TABLE:secondary bifurcation values}
\end{table}

\begin{figure}[h!]		
\centering
\begin{subfigure}[t]{0.47\textwidth}
\centering
\includegraphics[scale = 0.5]{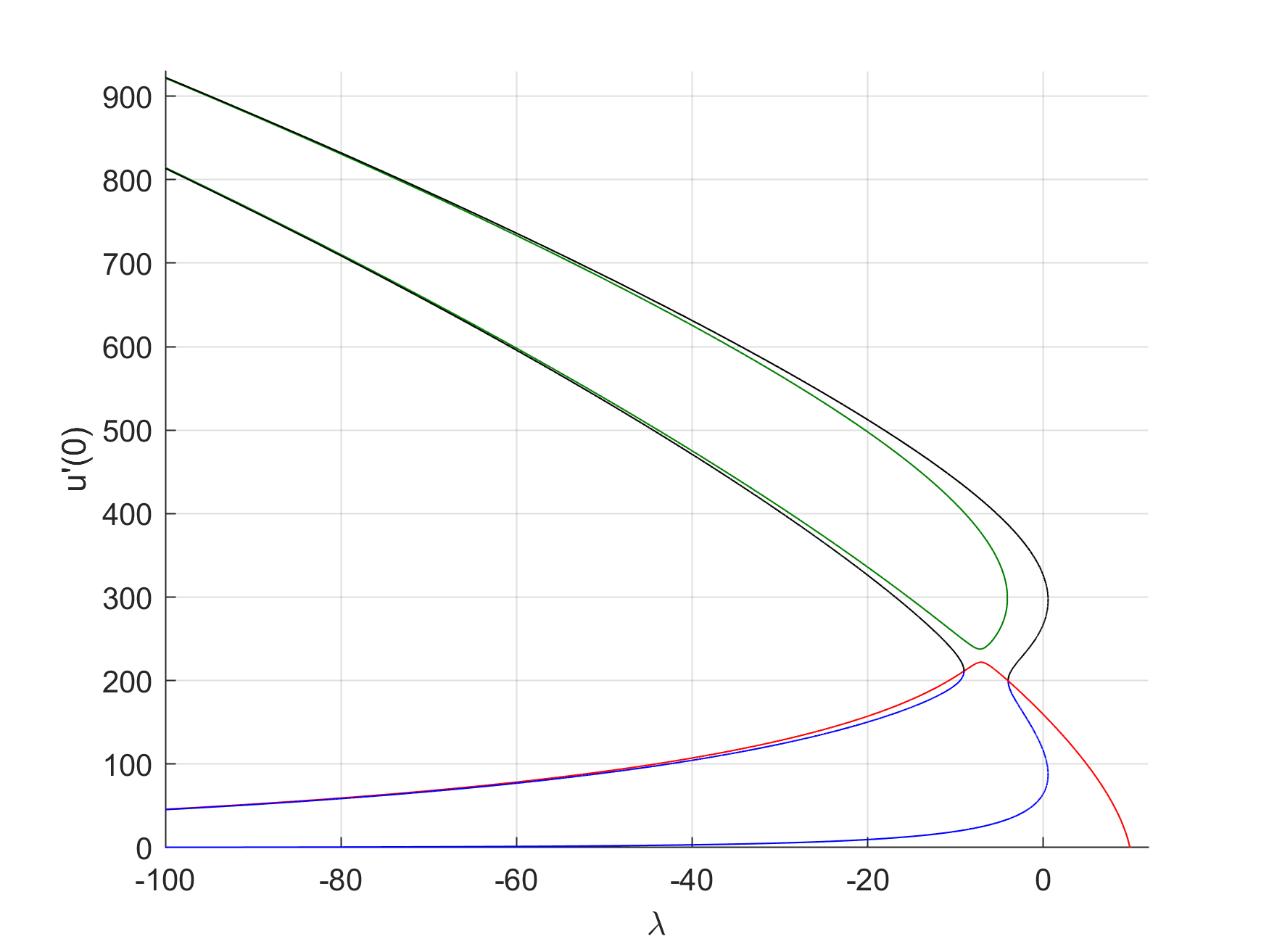}
\caption{$\mu_{1}<\mu = 3.92<\mu_{2}$}\label{FIG:sin5 mod mu 3.92}
\end{subfigure}%
\begin{subfigure}[t]{0.47\textwidth}
\centering
\includegraphics[scale = 0.5]{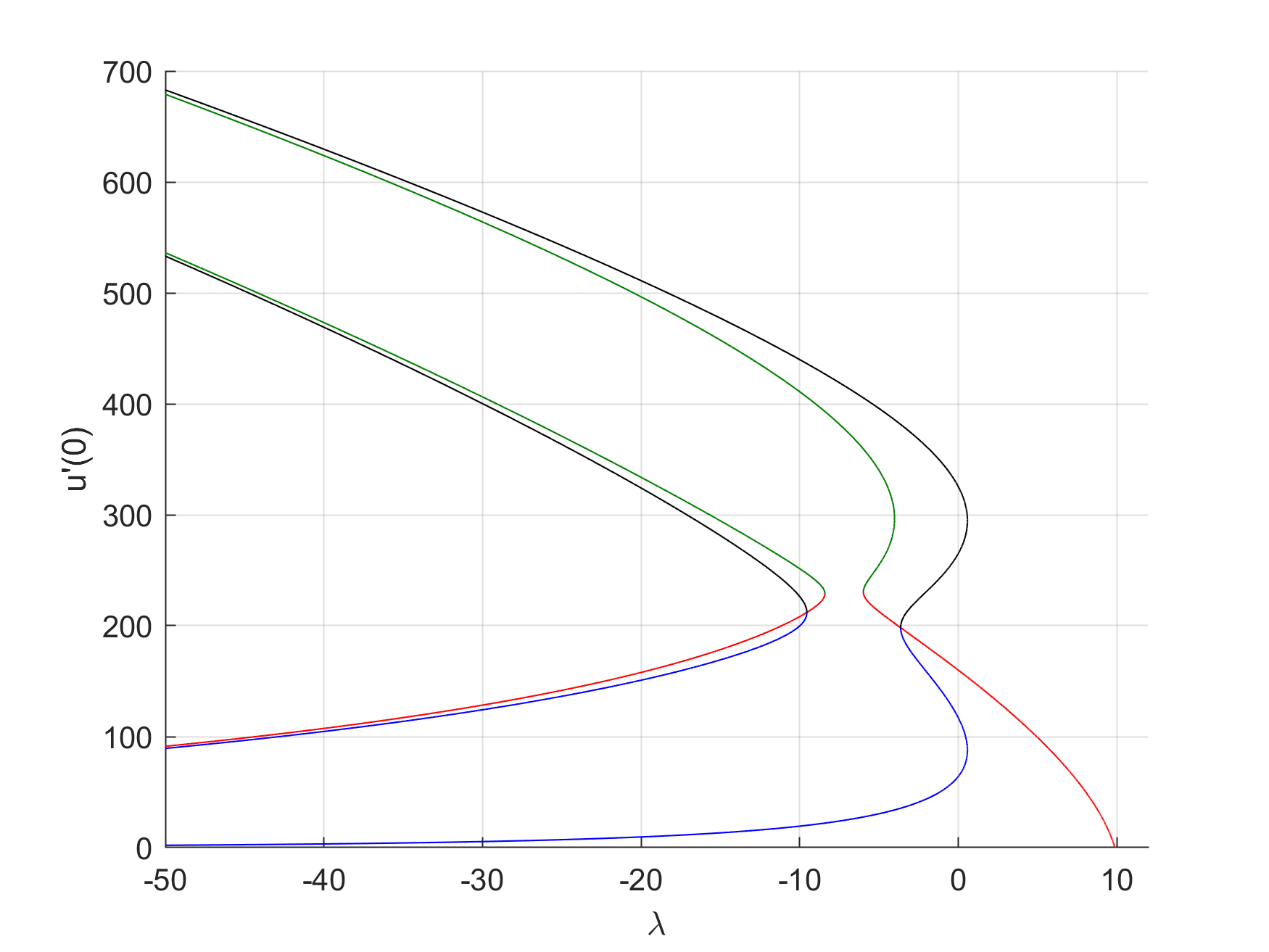}
\caption{$\mu_{2}<\mu = 3.93$}\label{FIG:sin5 mod mu 3.93}
\end{subfigure}
\caption{Two significant global bifurcation diagrams}
\end{figure}

And this situation persists,  until $\mu$ reaches the critical value $\mu_2$, where, according to the numerics,  the exterior folding touches $\mathscr{C}_{\mu_2}^+$ at a single point, in such a way
that the set of positive solutions of \eqref{1.1} consists of the single component
$\mathscr{C}_{\mu_2}^+$. As $\mu>\mu_2$ separates away from $\mu_2$, our numerical experiments
provide us with the global bifurcation diagram plotted in Figure \ref{FIG:sin5 mod mu 3.93},
where, once again, the set of positive solutions of \eqref{1.1} consists of two components,
$\mathscr{C}_\mu^+$ plus a global subcritical fold with a bifurcated secondary branch with the
structure of a global subcritical folding. Thus, a new re-organization of the previous solution branches has occurred through a sort of mutual re-combination.
\par
The global bifurcation diagrams remained topologically equivalent for all values of
$\mu >\mu_2$ for which we computed them. Figures \ref{FIG:sin5 mod mu 3.93} and \ref{FIG:sin5 mod mu 4.5} plot them for $\mu=3.93$ and $\mu=4.5$, respectively. In both cases, the set of positive solutions consists of $\mathscr{C}_\mu^+$ plus two global subcritical folds that meet at a single point, which can be viewed as a secondary bifurcation point from any of them.  This structure persists
for any further larger values of $\mu$. Figure \ref{FIG:sin5 mod mu 4.5 magnified} shows a magnification of the most significant parts  of Figure \ref{FIG:sin5 mod mu 4.5} superimposing the individual
types of the solutions together with the dimensions of their unstable manifolds.
\begin{figure}[h!]		
\centering
\begin{subfigure}[t]{0.47\textwidth}
\centering
\includegraphics[scale = 0.5]{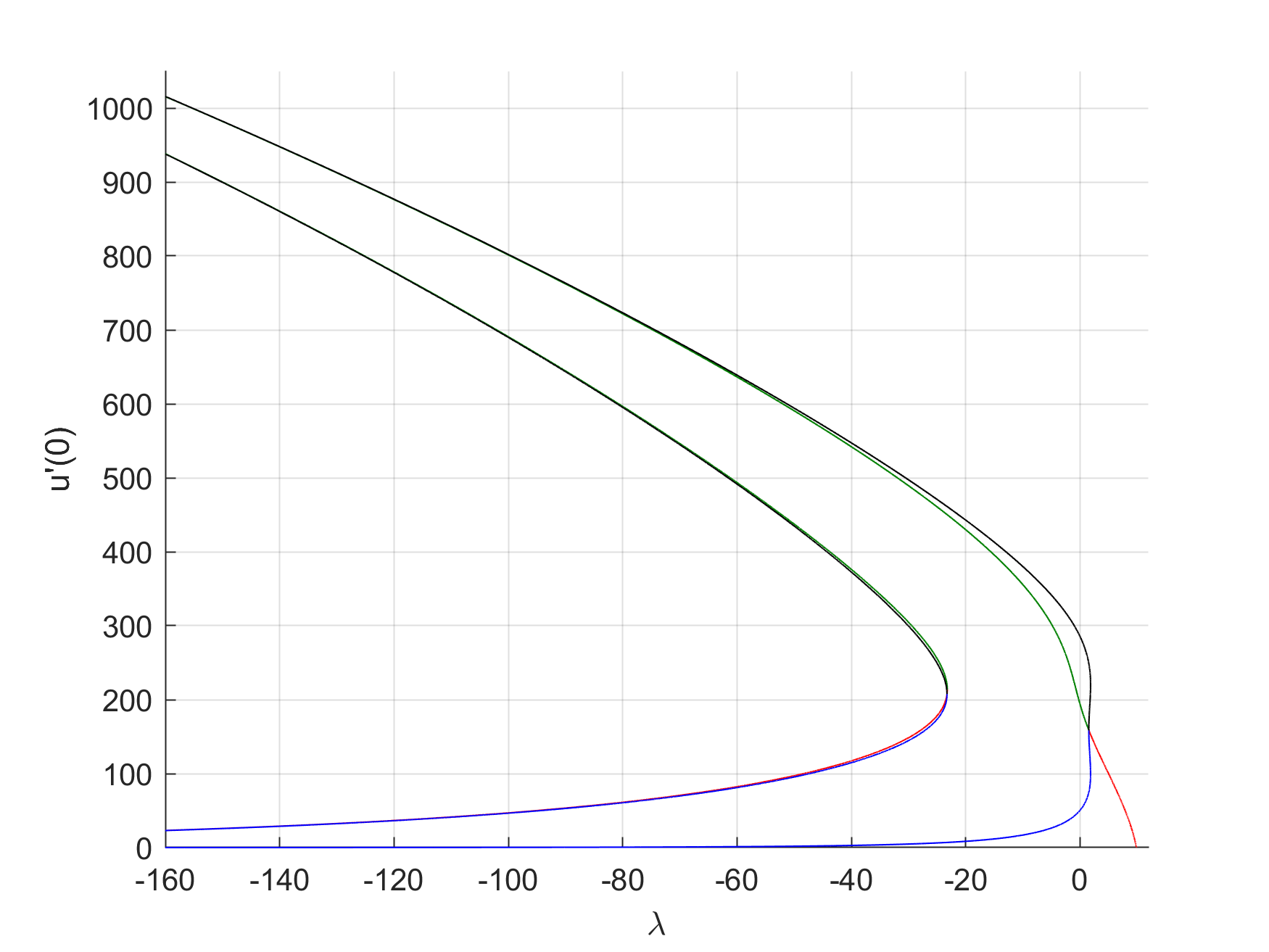}
\caption{$\mu_{2}<\mu = 4.5$}\label{FIG:sin5 mod mu 4.5}
\end{subfigure}%
\begin{subfigure}[t]{0.47\textwidth}
\centering
\includegraphics[scale = 0.5]{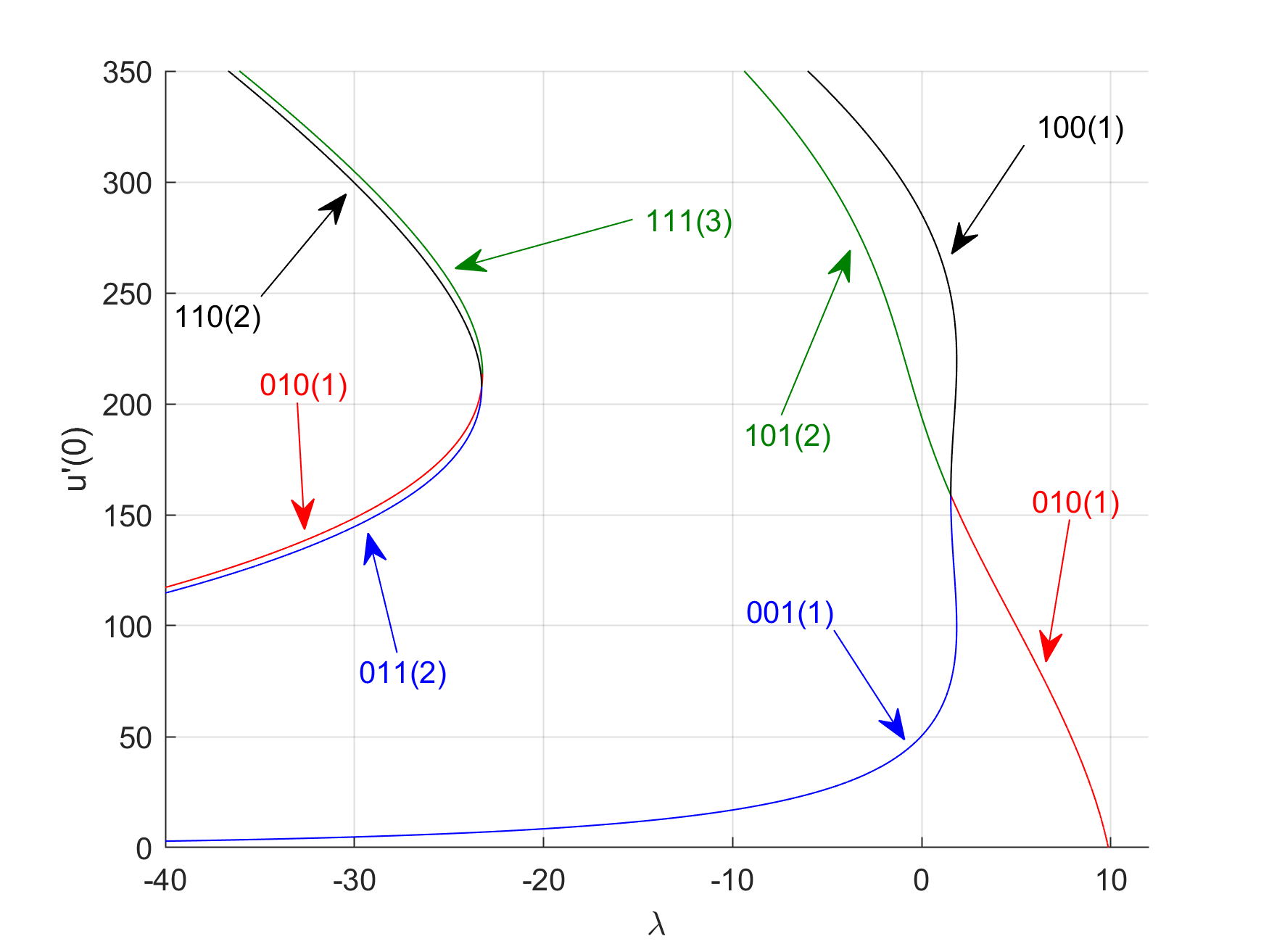}
\caption{A zoom of the plot on the left}\label{FIG:sin5 mod mu 4.5 magnified}
\end{subfigure}
\caption{Global bifurcation diagram for $\mu>\mu_{2}$}
\end{figure}

In full agreement with Conjecture \ref{con3.1}, for every $\mu\geq 1$, there exists $\l(\mu)<0$ such that
\eqref{1.1} has $2^3-1=7$ positive solutions for every $\l \leq \l(\mu)$. For the choice $\mu=4.5$,
$\l(\mu)\approx -23.27$ equals the $\l$-coordinate of the bifurcation point of the global subcritical folds. Note that, for this special choice, \eqref{1.1} possesses three solutions at $\l=0$.\\
\begin{figure}[h!]		
	\centering
	\begin{subfigure}[t]{0.47\textwidth}
		\centering
		\includegraphics[scale = 0.5]{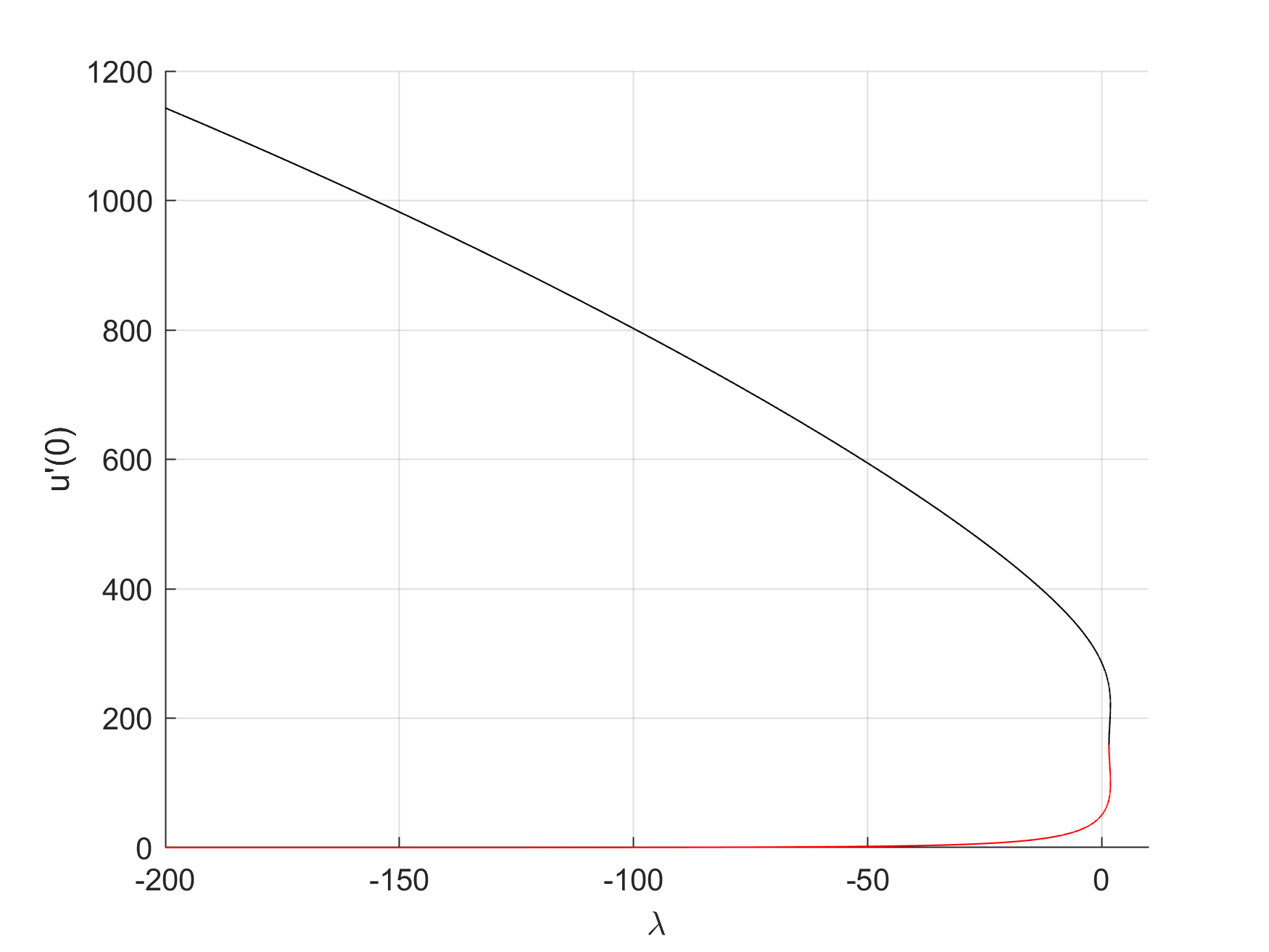}
		\caption{The branch bifurcating from $\mathscr{C}_{\mu}^+$.}\label{FIG:sin5_mu_solutions_branch}
	\end{subfigure}%
	\begin{subfigure}[t]{0.47\textwidth}
		\centering
		\includegraphics[scale = 0.5]{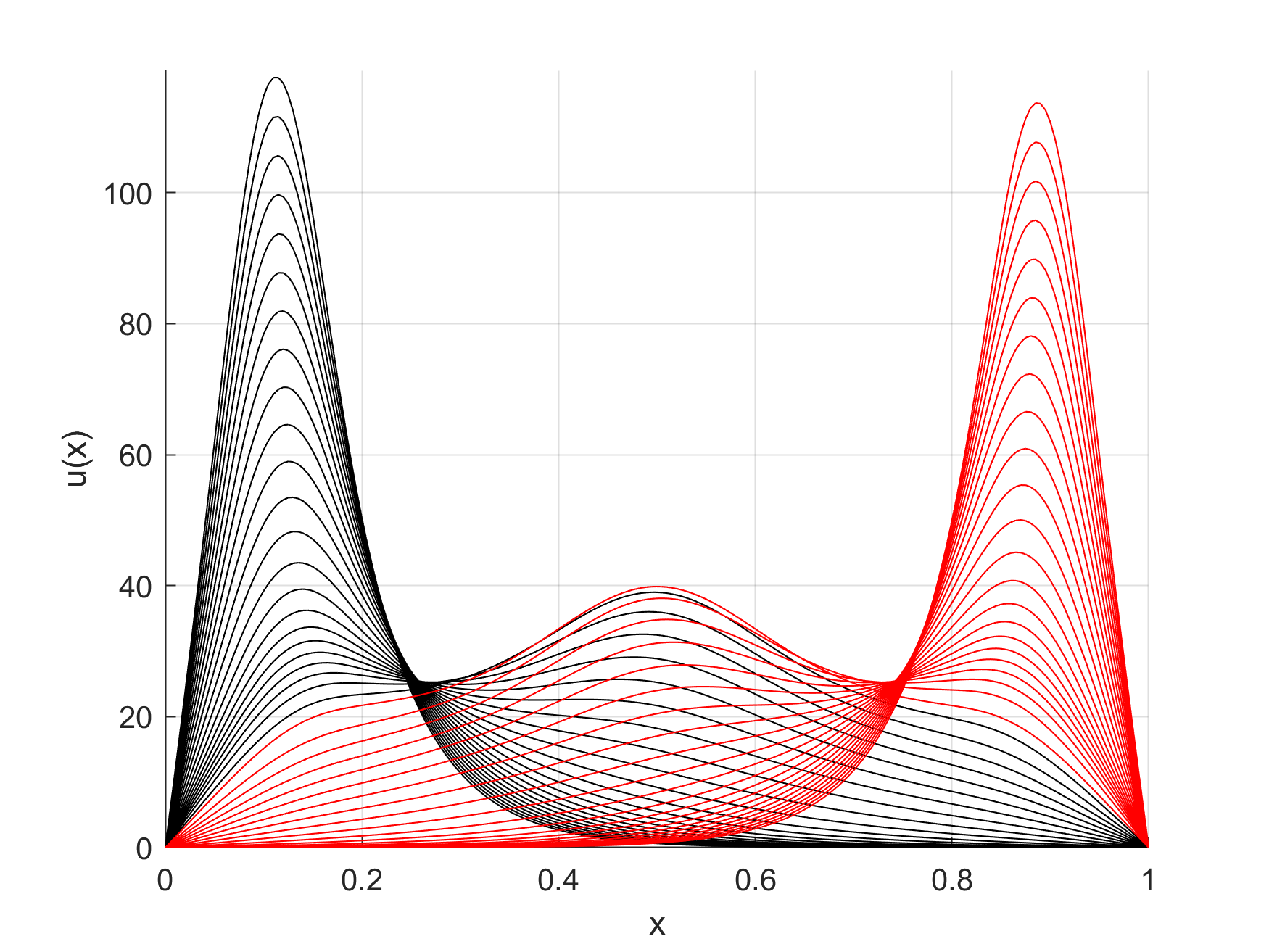}
		\caption{Plots of solutions $001(1)$ and $100(1)$ on the left.}\label{FIG:sin5_mu_solutions}
	\end{subfigure}
	\caption{Plots of solutions (right) along the branch (left) from Figure \ref{FIG:sin5 mod mu 4.5}.}\label{FIG:sin5_mu_solution_along_branch}
\end{figure}
\indent Figure \ref{FIG:sin5_mu_solution_along_branch} plots a series of solutions with types $100$ and $001$ along the blue/black branch of Figure \ref{FIG:sin5 mod mu 4.5 magnified} that is part of the component $\mathscr{C}_\mu^+$, which has been isolated
in Figure \ref{FIG:sin5_mu_solutions_branch}. According to our numerical experiments, the solutions on
the lower half-branch are of type $001$; they have been plotted using red color. As the branching point is approached (see Figure \ref{FIG:sin5 mod mu 4.5 magnified}), the right peak diminishes and the solution
looks like the first eigenfunction $\sin(\pi x)$. At the branching point, the solution changes its old type to $100$. These solutions have been plotted using black color. The peak on the left starts to increase as we separate away from the bifurcation value.

\section{Numerics of bifurcation problems}

\noindent To discretize \eqref{1.1} we have used two methods. To compute the small positive solutions bifurcating
from $u=0$ we implemented a pseudo-spectral method combining a trigonometric
spectral method with collocation at equidistant points, as in G\'{o}mez-Re\~{n}asco and L\'{o}pez-G\'{o}mez  \cite{GRLGJDE,GRLGNA},  L\'{o}pez-G\'{o}mez, Eilbeck, Duncan and Molina-Meyer \cite{LGEDMM}, L\'{o}pez-G\'{o}mez and Molina-Meyer \cite{LGMMJDE,LGMMTPB,LGMMMCS},   L\'{o}pez-G\'{o}mez, Molina-Meyer and Tellini \cite{LGMMT}, L\'opez-G\'omez, Molina-Meyer and Rabinowitz \cite{LGMMR}, and Fencl and L\'{o}pez-G\'{o}mez \cite{FLG}. This gives high accuracy
at a rather reasonable computational cost (see, e.g., Canuto, Hussaini, Quarteroni and Zang \cite{CHQZ}). However, to  compute the large positive solutions we have preferred  a centered finite differences scheme, which gives high accuracy at a lower computational cost, as it  is runs  much faster in computing global solution branches in the bifurcation diagrams.
\par
The pseudo-spectral method is more efficient and versatile for choosing the shooting direction from the trivial solution in order to compute the small positive solution of $\mathscr{C}^+$, as well as to detect the bifurcation points along the solution branches. Its main advantage in accomplishing this task
comes from the fact that it provides us with the true bifurcation values from the trivial solution, while the scheme in differences only gives a rough approximation to these  values.
A pioneering reference on these methods is the paper of Eilbeck \cite{Ei}, which was seminal
for the research teams  of the second author.
\par
For computing all the global subcritical folds  arisen along this paper, we adopted the following, rather novel, methodology. Once computed $\mathscr{C}^+$, including all bifurcating branches from
the primary curve emanating from $u=0$ at $\l=\pi^2$, one can ascertain  the types of the solutions
in $\mathscr{C}^+$ for sufficiently negative $\l$. As, due to Conjecture \ref{con3.1} and the argument
supporting it in Section 1, we already know that
\eqref{1.1} admits $2^{n+1}-1$ positive solutions for sufficiently negative $\l$, together with their
respective types, we can determine the types of all solutions of \eqref{1.1} for $\l$ sufficiently negative that remained outside the component $\mathscr{C}^+$. Suppose, e.g., that we wish to compute
the solution curve containing the positive solutions of type $011(2)$ in Figure \ref{FIG:sin5 mod mu 4.5 magnified}. Then, we consider as the initial iterate, $u_0$,  for the underlying Newton method some function with a similar shape. If the choice is sufficiently accurate, after finitely many iterates, whose number depends on how far away stays  from the true solution the initialization $u_0$, the Newton scheme should provide us with the first positive solution on that particular component. Once located the first point, our numerical path-following codes provide us with the entire solution curve almost algorithmically though the code developed by Keller and Yang \cite{KY} to treat the turning points of these folds as if they were regular points treated with the implicit function theorem.
\par
The huge complexity of some of the computed bifurcation diagrams, as well as their deepest
quantitative features, required an extremely careful control of all the steps in the involved subroutines. This explains why the available commercial  bifurcation solver packages, such as AUTO-07P,  are
almost un-useful to deal with differential equations, like the one of \eqref{1.1},
with heterogeneous coefficients. As noted by Doedel and Oldeman in \cite[p.18]{DO},
\vspace{0.2cm}
\par
\begin{small}\emph{ \lq \lq given the non-adaptive spatial discretization,
the computational procedure here is not appropriate for PDEs with solutions that rapidly
vary in space, and care must be taken to recognize spurious solutions and bifurcations.\rq\rq}
\end{small}
\par
\vspace{0.2cm}
This is nothing than one of the main problems that we found in our numerical experiments, as the number of critical points of the solutions increases according to the dimensions
of their unstable manifolds, and the solutions grew up to infinity as $\l\da -\infty$
within the intervals of $\mathrm{supp\,}a^+$, while, due to Theorem \ref{th3.1}, they
decayed to zero on the intervals where $a(x)$ is negative. Naturally, for all numerical methods is a serious challenge to compute solutions exhibiting simultaneously internal and boundary layers,
where the gradients can oscillate as much as wish for sufficiently negative $\l$.
\par
For general Galerkin approximations, the local convergence of the solution paths at regular, turning and simple bifurcation points was proven by Brezzi, Rappaz and Raviart in \cite{BRR1,BRR2,BRR3} and by L\'{o}pez-G\'{o}mez,  Molina-Meyer and  Villareal \cite{LGMMV} and L\'{o}pez-G\'{o}mez,  Eilbeck, Duncan and  Molina-Meyer in \cite{LGEDMM} for  codimension two singularities in the context  of systems. In these situations, the local structure of the solution sets for the continuous and
 the discrete models are known to be equivalent.
 \par
 The global continuation solvers used to compute the solution curves of this paper, as well as the
dimensions of the unstable manifolds of all the solutions filling them, have been built from the
theory on continuation methods of  Allgower and Georg \cite{AG},
Crouzeix and Rappaz \cite{CrRa}, Eilbeck \cite{Ei}, Keller \cite{Ke}, L\'{o}pez-G\'{o}mez \cite{LG88} and L\'{o}pez-G\'{o}mez, Eilbeck, Duncan and Molina-Meyer \cite{LGEDMM}.

\section{Final discussion}

\noindent Our systematic numerical experiments have confirmed that the following features should be true
for a general $a(x)$ with $\mathrm{supp\,}a^+$ consisting of $n+1$ intervals separated away by $n$
intervals where $a$ is negative:
\begin{itemize}
\item As $\l\da -\infty$, \eqref{1.1} has, at least, $2^{n+1}-1$ positive solutions.  Theorem \ref{th3.3} has shown it under condition \eqref{iii.33}. It remains an open problem ascertaining whether, or not, \eqref{iii.33} holds. Actually, for the special choice \eqref{1.3}, it should have exactly $2^{n+1}-1$.
\item  As $\l\da -\infty$, the Morse index of any positive solution $u$ of type
 $d_1d_2\cdots d_{n+1}$, $d_j\in\{0,1\}$, is given by
$$
  \mathscr{M}(u):= \sum_{j=1}^{n+1}d_j.
$$
\item The eventual symmetric character of the solutions cannot be lost along any of the components of the set of solutions, unless a bifurcation point is crossed. Thus, each fold consists of either symmetric solutions around $0.5$, or asymmetric ones.
\end{itemize}
Note that in the special case when $a(x)$ is given by \eqref{1.3}, $a(0.5)<0$ if $n$ is odd, while $a(0.5)>0$ if $n$ is even. Moreover, according to our numerical experiments, the component $\mathscr{C}^+$ does not admit any bifurcation point if $n=2$, whereas it admits one if $n\in\{1,3\}$. Thus, one might be tempted to believe that, in general, $\mathscr{C}^+$ should not have any bifurcation  point if $a(0.5)<0$. Our numerical experiments in Section 7 show that, for the special choice
\begin{equation}
\label{9.1}
	a(x):= \left\{ 	\begin{array}{ll} 4.5 \sin(5\pi x) & \quad \text{ if }\;\; x\in [0,0.2)\cup(0.8,1],
\\[1ex]  \sin(5\pi x) & \quad\text{ if }\;\; x\in [0.2,0.8],  \end{array}\right.
\end{equation}
the component $\mathscr{C}^+$ has a bifurcation point (see Figure \ref{FIG:sin5 mod mu 4.5 magnified}), though $a(0.5)<0$. Thus, one should be extremely careful in conjecturing anything from either numerical experiments, or heuristical considerations, as they might drive, very easily, to extract false conjectures (see Sovrano \cite{So}).
\par
According to the numeric of Section 7, the problem
\begin{equation}
\label{9.2}
		\left\{\begin{aligned}
			-u'' &= a(x)u^{2}\quad\text{in }(0,1),\\
			u(0) &= u(1) = 0,
		\end{aligned}\right.
\end{equation}	
for the special choice \eqref{9.1} has, exactly, three positive solutions, and it should
not admit anymore, by consistency with the structure of the global
bifurcation diagram  (see Figure \ref{FIG:sin5 mod mu 4.5 magnified}). Thus, Corollary 1.4.2
of Feltrin \cite{Fe} is optimal in the sense that it cannot be satisfied for sufficiently small
$\mu>0$. Precisely, it does not hold when $\mu=1$ for the special choice \eqref{9.2}. Since \eqref{1.1} still possesses $2^3-1=7$ positive solutions for sufficiently negative $\l<0$, this example also shows the independence between the conjecture of \cite{GRLGJDE} and the multiplicity results of Feltrin and  Zanolin \cite{FZ} and Feltrin \cite{Fe}.
\par
For every $\mu \in (3.895,3.925)$, the component $\mathscr{C}^+$ of \eqref{1.1} for the choice
\begin{equation}
\label{9.3}
	a(x):= \left\{ 	\begin{array}{ll} \mu \sin(5\pi x) & \quad \text{ if }\;\; x\in [0,0.2)\cup(0.8,1],
\\[1ex]  \sin(5\pi x) & \quad\text{ if }\;\; x\in [0.2,0.8],  \end{array}\right.
\end{equation}
exhibits two bifurcation points along it. This might be the first example of this nature
documented in the abundant  literature on superlinear indefinite problems.
\par
As, generically, higher order bifurcations break down by the eventual asymmetries
of the weight functions, as discussed in Chapter 7 of \cite{LG01} and in \cite{LGT}, we conjecture that
\begin{itemize}
\item Generically, when $a(x)$ is asymmetric about $0.5$, the set of positive solutions
of \eqref{1.1} consists of the component $\mathscr{C}^+$ plus $n$ supercritical folds, $\mathscr{D}_j$, $j\in \{1,...,n\}$, in such a way that, as $\l\da -\infty$, \eqref{1.1} admits, at least, one solution in $\mathscr{C}^+$ and two solutions in
$\mathscr{D}_j$ for each $j\in\{1,...,n\}$.
\end{itemize}
The global bifurcation diagram plotted in Figure \ref{FIG:sin5 complete diag zoom} being a paradigm
of this global topological behavior.
\par
The analysis of the reorganization in components of the positive solutions of \eqref{1.1} carried out in Section 7 for the special choice \eqref{9.3} when $\mu$ increases from $3.89$ up to reach the value $\mu=3.93$ reveals the high complexity that the  global bifurcation diagrams of \eqref{1.1} might have when $a(x)$ changes of sign a large number of times by incorporating the appropriate control parameters into the problem. Getting any insight into
this problem is a challenge.

\end{document}